\def\VF{\mathrm{VF}}
\def\Res{\mathrm{R}}
\def\VG{\mathrm{VG}}
\newcommand{\RF}{{\rm RF}}
\def\Smo{\mathrm{Smo}}
\def\WF{\mathrm{WF}}
\definecolor{immi}{rgb}{0,.5,0}
\renewcommand{\color}[1]{}
\long\def\blue#1{{\color{blue} #1}}
\newbox\removebox
\newcommand\remove[1]{%
\setbox\removebox=\ifmmode\hbox{$#1$}\else\hbox{#1}\fi%
\leavevmode
\rlap{\textcolor{blue}{\vrule height0.8ex depth-0.6ex width\wd\removebox}}%
\box\removebox
}
\long\def\bigremove#1{%
\par\setbox\removebox=\vbox{#1}%
\vbox{%
\vbox to0pt{\hbox{\tikz\draw[color=blue,thick] (0,0) -- (\wd\removebox,-\ht\removebox)  (\wd\removebox,0) -- (0,-\ht\removebox);}}
\box\removebox
}
}
\newcommand{\cCexp}{\cC^{\mathrm{exp}}}
\newcommand{\grad}{\operatorname{grad}}
\newcommand{\Loc}{{\mathrm{Loc}}}
\def\ac{{\overline{\rm ac}}}
\def\Supp{\operatorname{Supp}}
\def\Sing{\operatorname{SS}}
\def\gLPas{\cL_{\rm gDP}}
\def\11{{\mathbf 1}}
\def\AA{{\mathbb A}}
\def\CC{{\mathbb C}}
\def\FF{{\mathbb F}}
\def\NN{{\mathbb N}}
\def\QQ{{\mathbb Q}}
\def\ZZ{{\mathbb Z}}
\def\cC{{\mathscr C}}
\def\cD{{\mathcal D}}
\def\cF{{\mathcal F}}
\def\cL{{\mathcal L}}
\def\cM{{\mathcal M}}
\def\cO{{\mathcal O}}
\def\cS{{\mathcal S}}
\def\cX{{\mathcal X}}
\def\cY{{\mathcal Y}}
\newtheorem{thm}[subsubsection]{Theorem}
\newtheorem{lem}[subsubsection]{Lemma}
\newtheorem{cor}[subsubsection]{Corollary}
\newtheorem{prop}[subsubsection]{Proposition}
\theoremstyle{definition}
\newtheorem{defn}[subsubsection]{Definition}
\newtheorem{example}[subsubsection]{Example}
\newtheorem{def-prop}[subsubsection]{Proposition-Definition}
\newtheorem{def-theorem}[subsubsection]{Theorem-Definition}
\newtheorem{def-lem}[subsubsection]{Lemma-Definition}
\theoremstyle{remark}
\newtheorem{remark}[subsubsection]{Remark}
\theoremstyle{plain}
\numberwithin{equation}{subsection}
\newcommand{\ord}{\operatorname{ord}}
\newcommand{\abs}[1]{\lvert#1\rvert}
\begin{document}

\setcounter{tocdepth}{1} % Show subsection in table of contents

\author[Cluckers]{Raf Cluckers}
\address{Universit\'e de Lille, Laboratoire Painlev\'e, CNRS - UMR 8524, Cit\'e Scientifique, 59655
Villeneuve d'Ascq Cedex, France, and,
KU Leuven, Department of Mathematics,
Celestijnenlaan 200B, B-3001 Leu\-ven, Bel\-gium}
\email{Raf.Cluckers@univ-lille.fr}
\urladdr{http://rcluckers.perso.math.cnrs.fr/}

%\author[J.~Gordon]{Julia Gordon}
%\address{Department of Mathematics, University of British Columbia,
%Vancouver BC V6T 1Z2 Canada}
%\email{gor@math.ubc.ca}
%\urladdr{http://www.math.ubc.ca/$\sim$gor}

\author[Halupczok]{Immanuel Halupczok}
\address{Mathematisches Institut,
Gebaeude 25.22, Universit\"atsstr. 1, 40225 D\"usseldorf,
Germany}
\email{math@karimmi.de}
\urladdr{http://www.immi.karimmi.de/en/}

\author[Loeser]{Fran\c cois Loeser}
\address{Sorbonne Universit\'e, UPMC Univ Paris 06, UMR 7586 CNRS, Institut Math\'ematique de Jussieu, F-75005 Paris, France}
\email{Francois.Loeser@imj-prg.fr}
\urladdr{https://webusers.imj-prg.fr/$\sim$francois.loeser/}

\author[Raibaut]{Michel Raibaut}
\address{Laboratoire de Math\'ematiques\\
Universit\'e Savoie Mont Blanc, B\^atiment Chablais, Campus Scientifique, Le Bourget du Lac, 73376 Cedex, France}
\email{Michel.Raibaut@univ-smb.fr}
\urladdr{www.lama.univ-savoie.fr/$\sim$raibaut/}

%\subjclass[2000]{Primary 14E18; Secondary 22E50, 40J99}

%\keywords{Transfer principles for motivic integrals, $L^1$-integrability, motivic integration, Harish-Chandra characters, motivic exponential functions}

%\dedicatory{Preliminary notes (\today)}

%\thanks{}

\subjclass[2010]{Primary 14E18; Secondary 22E50, 03C10, 11S80, 11U09}

\keywords{Transfer principles for motivic integrals, motivic integration, non-archimedean geometry, motivic constructible exponential functions, loci of motivic exponential class,
distributions, wave front sets, micro-local analysis, micro-locally smooth points, holonomic distributions, uniform $p$-adic model theory, Fourier transforms, $p$-adic continuous wavelet transforms, $p$-adic integration, Denef-Pas cell decomposition, discriminants, Schwartz-Bruhat functions}

\title[Distributions and wave front sets, uniformly]
{Distributions and wave front sets in the uniform non-archimedean setting}

\begin{abstract}
We study some constructions on distributions in a uniform $p$-adic context, and also in large positive characteristic, using model theoretic methods. We introduce a class of distributions which we call distributions of $\cCexp$-class and which is based on the notion of $\cCexp$-class functions from \cite{CHallp}. This class of distributions is stable under Fourier transformation and has various forms of uniform behavior across non-archimedean local fields. We study wave front sets, pull-backs and push-forwards of distributions of this class. In particular we show that the wave front set is always equal to the complement of the zero locus of a $\cCexp$-class function.
We first revise and generalize some of the results of Heifetz that he developed in the $p$-adic context by analogy to results about real wave front sets by H\"ormander. In the final section, we study sizes of neighborhoods of local constancy of Schwartz-Bruhat functions and their push forwards in relation to discriminants.
\end{abstract}

%\thanks{}

\maketitle

%\tableofcontents

\section{Introduction}

\subsection{}
 \blue{The study of wave front sets since H\"ormander has been a bridge between geometry and analysis, pure and applied, as in the study of partial differential equations and associated distributions. Our research is driven by the quest for $p$-adic and motivic analogues for results in real and complex geometry and analysis, where in the $p$-adic case the link between distributions and differential operators still has many mysterious aspects. Here, we introduce a uniform algebraic viewpoint on $p$-adic wave front sets, based on model theory, and we prove results like Theorem \ref{Smo} which seem to be waiting for real analogues, and which in particular yield uniformity in the local field and natural notions of families of distributions.}

\subsection{}In his paper
\cite{Hei85a}, Heifetz developed a $p$-adic version of the wave front set of a distribution first introduced by H\"ormander in the real case in
\cite{Hormander71}, as follows.
Let $K$ be a $p$-adic field and $X$ be a open subset of $K^n$.
A distribution $u$ on $X$ is an element of the linear dual of the complex vector space of locally constant functions with compact support on $X$.
For  $\Lambda$  an
open subgroup of $K^\times$  of finite index, one says that $u$ is $\Lambda$-smooth at
a point $(x_0,\xi_0)\in X \times (K^n \setminus \{0\})$
if there is a neighborhood $U \times V$ of $(x_0,\xi_0)$ such that, for any
locally constant function $\varphi$ with compact support on $U$ and any $\xi \in V$,
the Fourier transform
$\cF(\varphi u ) (\lambda \xi)$ of $\varphi u$ vanishes for $\lambda \in \Lambda\setminus C$ for some compact $C\subset K$.
 The complement of the locus of  $\Lambda$-smooth vectors in $X \times (K^n \setminus \{0\})$ is the $\Lambda$-wave front set $\WF_\Lambda(u)$ of $u$.
The goal of this paper is to study distributions and their wave front sets
 in a uniform $p$-adic context, and also in large positive characteristic, using model theoretic methods, in order to obtain uniformity results.

\medskip

 Let us now provide a more detailed description of  the content of this work.
 We start  in Section \ref{sec:non-arch} by revisiting and improving the results and constructions of Heifetz \cite{Hei85a}.
In particular,
instead of using
$K$-analytic maps as in \cite{Hei85a} we deal
with strict $C^1$ maps throughout (cf. Definition \ref{def:C^1})
Also, we work over any non-archimedean local field without any restriction on the characteristic until the end of Section \ref{sec:non-arch}.
Another important novelty is the introduction in Definition \ref{def:S'_Gamma-convergence} of a topology on the space of distributions in order to make explicit
the
continuity aspects  in Theorem \ref{thm:pull} on  pull-backs of distributions. Note that
such a topology was not considered in \cite{Hei85a},  and that if one does not specify this finer topology, the pull-back construction is not continuous and not well defined as we show in Example \ref{ex:topology}.

\medskip
  The core of this paper lies in
  Section \ref{sec:distCexpclass} were we study uniformity with respect to the local field.
  To this aim we have to use  the  field-independent descriptions provided by model theory and uniform integration, cf. \cite{CLexp}, \cite{CGH5}, \cite{CHallp}.
 More precisely, we introduce a class of distributions given by uniform, field-independent descriptions, called of $\cCexp$-class. \blue{Roughly, we require that the continuous wavelet transform is a $\cCexp$-class function in the sense of  \cite{CHallp}.} These distributions are not only uniform, they also have some geometric properties that arbitrary distributions do not share, and moreover, this class of distributions is stable under Fourier transformation and under pull-backs.
In  our study of $\cCexp$-class distributions, we make full use of \cite{CGH5} and \cite{CHallp} in  proofs: this includes use of limits in the proof of Theorem \ref{Hei2},
elimination of universal quantifiers and of the sufficiently large quantifier in the proof of Theorem \ref{Smo}, and, stability under integration in the proof of Theorem \ref{Fourier:p}.
As we show in Example \ref{ex:smooth}, one cannot expect
 the wave front sets associated to distributions of $\cCexp$-class to be definable in general.
 However, we prove in
 Theorem \ref{Smo} that
 the wave front sets associated to distributions of $\cCexp$-class are always the complement of a zero locus of a function of $\cCexp$-class. Note that
this is in sharp contrast with Theorem \ref{WFu=S} of Section \ref{sec:non-arch} which states that the wave front of an abstract distribution can be equal to any closed cone.
Our control of the wave front sets shares some similariries  in spirit with the work by Aizenbud and Drinfeld in \cite{AizDr}.
We make this connection explicit in  Section \ref{sec:WF}, where we also rephrase an open question of \cite{AizDr}, see Section \ref{sec:WF}, below Definition \ref{def:WF-hol}. %Remark \ref{rem:q:AizD}.

\medskip

Finally, in  Section \ref{sec:DiscrSB} we investigate a natural question relative to the behaviour of
Schwartz-Bruhat functions under integration. What we prove is essentially that the valuative radius of balls on which the integral of
a Schwartz-Bruhat function $\varphi$ along the fibers of a morphism $f$ between projective varieties is linearly controlled by
the valuative distance to the discriminant of $f$ and the valuative radius of balls on which the function $\varphi$ is constant.

\subsection{Acknowledgements}
The authors would like to thank Julia Gordon for interesting discussions during the preparation of the paper.
The first author would like to thank Avraham Aizenbud and Dmitry Gourevitch for interesting discussions during the preparation of the paper.
The third author would like to thank
{\fontencoding{T5}\selectfont Ngô Bảo Châu} for interesting discussions leading to the results of section \ref{sec:DiscrSB}. The authors thank the referee for valuable suggestions.  

R.C. was partially supported by the European Research Council under the European Community's Seventh Framework Programme (FP7/2007-2013) with ERC Grant Agreement nr. 615722
MOTMELSUM, and thanks the Labex CEMPI  (ANR-11-LABX-0007-01). I.H. was partially supported by the SFB~878 of the Deutsche Forschungsgemeinschaft.
F.L. was partially supported by ANR-13-BS01-0006 (Valcomo), by ANR-15-CE40-0008 (D\'efig\'eo), by the European Research Council
under the European Community's Seventh Framework Programme (FP7/2007-2013)/ERC Grant Agreement nr.~246903 NMNAG and by the Institut Universitaire de France.
M.R. was partially supported  by ANR-15-CE40-0008 (D\'efig\'eo).

\section{Some additions on wave front sets in the non-archimedean case}\label{sec:non-arch}

In this section we let $K$ be a non-archimedean local field (namely either a finite field extension of $\QQ_p$ for some prime $p$ or isomorphic to $\FF_q((t))$ for some prime power $q$).
We give additions to Heifetz' constructions from \cite{Hei85a} in three ways: we use strict $C^1$ maps instead of $K$-analytic maps throughout; we work with any (non-archimedean) local field instead of only $p$-adic fields; we make the topology and continuity aspects explicit in order to define pull-backs of distributions (this is omitted in \cite{Hei85a}). From Section \ref{sec:non-arch} on, we combine this with definability conditions which will allow us to work uniformly in $K$ (usually excluding $\FF_q((t))$ of small positive characteristic however).

Let $\cO_K$ denote the valuation ring of $K$ with maximal ideal $\cM_K$ and residue field $k_K$ with $q_K$ elements and characteristic $p_K$.
Let $\abs{.}$ be the ultrametric norm on $K$ so that a uniformizer of $\cO_K$ has norm $q_K^{-1}$, and write $\ord:K\to \ZZ\cup\{+\infty\}$ for the valuation sending a uniformizer to $1$.
Let $\psi_K$ be an additive character on $K$ which is trivial on $\cM_K$ and nontrivial on $\cO_K$.

\subsection{Strict $C^1$ manifolds}
The notion of strict differentiability is rather old, but we follow \cite[Definition 7.9]{BGlockN} and \cite[Definition 3.1]{Glock2006}, and their treatments.
\begin{defn}\label{def:C^1}
	A function $f:U\subset K^n\to K^m$ with $U$ open in $K^n$ is called \emph{strictly differentiable} or \emph{strict} $C^1$ at $a\in U$ if there is a matrix $A$ in $K^{m\times n}$ such that
	$$\lim_{(x,y)\to(a,a)} \frac{f(x) - f(y) - A\cdot(x-y)}{|x-y|} =0$$
	where the limit is taken over $(x,y)\in U^2$ with $x\not=y$. Such $A$ is automatically unique and we denote it by $f'(a)$ or by $Df(a)$.
	
	The function $f$ is called \emph{strict $C^1$} if it is strict $C^1$  at each $a\in U$.
	Note that $K$-analytic maps are automatically strict $C^1$.
\end{defn}
	See \cite[Theorem A]{Glock}, \cite[Proposition 7.11]{BGlockN} and \cite[Lemma 4.4]{Glock2006} for comparisons with alternative differentiability notions. Clearly, a strict $C^1$ function is $C^1$ \cite[Lemma 3.2]{Glock2006}.\\

By the inverse and implicit function theorems for strict $C^1$  maps from \cite[Theorems 7.3, 7.4]{Glock2006}, strict $C^1$ submanifolds in $K^n$ can be defined,  with a well-defined (and unique) dimension, see \cite[Section 8]{BGlockN} and \cite[Section 2.3]{Bertram}.

\begin{defn} A \emph{strict $C^1$ chart} of $K^n$ is nothing else than $f:U\to V$ with $U\subset K^n$ and $V\subset K^n$ two open sets, $f$ a strict $C^1$ isomorphism (namely a strict $C^1$ bijection with strict $C^1$ inverse).
A nonempty subset $X\subset K^n$ is a \emph{strict $C^1$ submanifold} of $K^n$ of dimension $\ell$ for some $\ell$ with $0\leq \ell \leq n$ if for each $x\in X$ there exists a chart $f:U\to V$ of $K^n$ such that $x\in U$ and such that $f(X\cap U)$ equals $V\cap (K^\ell\times \{0\})$. %Such a chart of $f$ is called a submanifold chart of $X\subset K^n$.
\end{defn}

\begin{remark} \label{rem:isometry} This notion is equivalent to the following.
A nonempty subset $X$ of $K^n$, with the induced subspace topology, is a strict $C^1$ submanifold of $K^n$ of dimension $\ell$ for some $\ell\geq 0$ if, for each $x\in X$, there exist an open $U$ of $X$ containing $x$ and a coordinate projection $p:K^n\to K^\ell$ such that the restriction of $p$ to a map $p_{|U}:U\to p(U)$ is an isometry and such that $p_U^{-1}$ is strict $C^1$.
\end{remark}

By a strict $C^1$ manifold we will mean a strict $C^1$ submanifolds of $K^n$ of some dimension $\ell$ for some $n\geq \ell$. Note that, for us, strict $C^1$ submanifolds of $K^n$ are locally everywhere of the same dimension. The notion of strict $C^1$ morphisms and isomorphisms between strict $C^1$ manifolds is clear.

%The only caveat is that strict $C^1$ functions with zero derivative need not be locally constant (definable functions will solve also this problem).

We use finite dimensional fiber bundles as in \cite[page 255]{BGlockN}. For example, for a strict $C^1$ submanifold  $X$ of $K^n$, the tangent bundle $TX\subset K^n\times K^n$ and the co-tangent bundle $T^*X \subset K^n\times (K^n)^*$ are well-defined. Here, $(K^n)^*$ is the dual vector space of $K^n$, and we write $x\cdot\xi$ for the evaluation of $\xi\in (K^n)^*$ in $x\in K^n$. We identify $(K^n)^*$ with $K^n$ by using the standard bases. Recall that $T^*X$ is the bundle above $X$ such that for $x\in X$, the fiber above $x$ is the dual vector space of the tangent space to $X$ at $x$. We write $T^*X\setminus \{0\}$ for the intersection of $T^*X$ with $X\times (K^n\setminus \{0\})$.
\blue{Similarly, for a strict $C^1$ submanifold  $Y$ of $X$, we can consider the normal bundle $N_Y^X$ and the co-normal bundle $CN_Y^X$ of $Y\subset X$.
Recall that $N_Y^X$ at $y\in Y$ is the quotient of the tangent space to $X$ at $y$ by the tangent space to $Y$ at $y$, and that
the co-normal bundle $CN_Y^X$ is the dual bundle of $N_Y^X$. For a smooth algebraic variety $\cX$ over $K$ and a locally closed smooth subvariety $\cY\subset \cX$ one defines the tangent bundle $T\cX$, the cotangent bundle $T^*\cX$, the normal bundle $N_Y^X$ and the co-normal bundle $CN_Y^X$ as usual, see e.g.~\cite[Section 2]{AizDr}.}
For a strict $C^1$ morphism $f:X\to Y$ between strict $C^1$ submanifolds of $K^n$, resp.~$K^m$ and $x_0\in X$, we write $Df(x_0)$ for the linear map from the tangent space $T_{x_0}X$ to $X$ at $x_0$ to the tangent space $T_{f(x_0)}Y$  to $Y$ at $f(x_0)$, and $^tDf(x_0)$ is its dual map, from the dual space of $T_{f(x_0)}Y$ to the dual of $T_{x_0}X$.

%By \cite[Theorem 5.1 and Proposition 5.3]{BGlockN}, one has the usual Taylor approximation formula for strict $C^1$ functions. %hence, prop 1 of Heifetz is useful.

%(Although it says on p255 that dual bundles can pose a problem, in finite dimension it is ok, as explained in an email by Glockner to Raf on May the 4th 2016: If $g_ij(x)\in GL_n(K)$ is a cocycle for a vector bundle
%(which is strict $C^1$ in $x$), then $(g_ij(x)^{-1})^T$ also is strict $C^1$ in $x$
%and is a cocycle for the dual bundle, up to isomorphism.)

\subsection{Distributions}

%Let $K$ be a non-archimedean local field (of any characteristic at first).

%\commentimmi{Make consistent: $\ord$ vs. $|\cdot|$}
For $x\in K^n$ and $r\in \ZZ$, write $B_r(x)$ for the ball $\{x'\in K^n\mid \ord (x - x') \geq r\}$, which we call a
\emph{ball of valuative radius} $r$. We will simply write $B_r$ for $B_r(0)$.
For $X$ a subset of $K^n$ (or of another space depending on the context), write $\11_{X}$ for the characteristic function of $X$.
\begin{defn}
	Let $X$ be a strict $C^1$ submanifold of $K^n$. We denote by $\mathcal C^{\infty}(X)$ the $\mathbb C$-vector space of locally constant functions on $X$ with complex values. We call functions in $\mathcal C^{\infty}(X)$ sometimes $C^\infty$-functions, which should not be confused with strict $C^1$-functions.
\end{defn}
\begin{defn}
Let $X$ be a strict $C^1$ submanifold of $K^n$, say, of dimension $\ell$. Write $\cS(X)$ for the $\CC$-vector space of
\emph{Schwartz-Bruhat functions} on $X$, namely, functions in $\mathcal C^{\infty}(X)$ with compact support.
\end{defn}

	\begin{remark} Let $\varphi$ be a nonzero Schwartz-Bruhat function in $\cS(K^n)$. Since the support of $\varphi$ is compact, there is a maximal
		integer $\alpha^{-}(\varphi)$ such that $\varphi$ is supported in the ball $B_{\alpha^{-}(\varphi)}$.
		Since $\varphi$ is locally constant and has compact support, there is a minimal integer
		$\alpha^{+}(\varphi)$ such that $\varphi$ is constant on balls of (valuative) radius $\alpha^{+}(\varphi)$.
	\end{remark}

\begin{defn}
Let $X$ be a strict $C^1$ submanifold of $K^n$.
A \emph{distribution} $u$ on $X$ is by definition a $\CC$-linear map from $\cS(X)$ to $\CC$. Write $\cS'(X)$ to denote the collection of distributions on $X$. For any Schwartz-Bruhat function $\varphi$ in $\cS(X)$, depending on the context we will denote the evaluation of $u$ on $\varphi$ by $u(\varphi)$ or $\langle u,\varphi\rangle$. The vector space $\cS'(X)$ has a structure of $\mathcal C^{\infty}(X)$-module by the following operation: For any $\mathcal C^{\infty}$ function $\phi$ on $X$ and any distribution $u$ in $\cS'(X)$, the distribution
$\phi u$ is defined by
$$\langle \phi u, \varphi \rangle = \langle u,\phi \varphi\rangle $$
for any Schwartz-Bruhat function $\varphi$ in $\cS(X)$.
\end{defn}

%
%Let $U$ be an open in $K^n$. Write $\cS(U)$ for the $\CC$-vector space of Schwartz-Bruhat functions on $U$, that is, locally constant $\CC$-valued functions on $U$ with compact support. Recall that a distribution $u$ on $U$ is a $\CC$-linear map from $\cS(U)$ to $\CC$, and write $\cS'(U)$ to denote the collection of distributions on $U$.

The following definition will turn out to be convenient in the definable context.

\begin{defn}\label{D-func}
Let $X$ be a strict $C^1$ submanifold of $K^n$.
The \emph{$B$-function} $D_u$ of a distribution $u$ on $X$ is the map from $X\times \ZZ$  to $\CC$ sending $(x,r)$ with $x\in X$ and $r\in\ZZ$ to $u(\11_{B_r(x)\cap X})$ if $B_r(x)\cap X$ is compact, and to zero otherwise. (Recall that $\11_A$ stands for the characteristic function of $A$.)
\end{defn}
\blue{
The $B$-function $D_u$ of a distribution $u$ on a strict $C^1$ submanifold $X\subset K^n$ may be considered (up to a scaling factor $q_K^{r\ell}$ with $\ell$ the dimension of $X$), as the $p$-adic continuous wavelet transform $W(u):X\times \ZZ\to\CC$ of $u$ with mother wavelet the characteristic function of the unit ball $B_0(0)$ around zero. More precisely, $W(u)$ is defined (by analogy to real continuous wavelet transformation) as
$$
W(u)(x,r) := q^{r\ell} D_u(x,r).
$$}
%\begin{defn} Let $X$ be a strict $C^1$ submanifold of $K^n$ and let $u$ be a distribution on $X$.
%The wavelet transform $W(u)$ of $u$, with mother wavelet the characteristic function of the unit ball $B_0(0)$ around zero, is the function
%$$
%W(u):X\times \ZZ\to \CC: (x,r)\mapsto \begin{cases} q_K^{-r}u(\11_{B_r(x)\cap X})$ if $B_r(x)\cap X$ is compact
%$$
%
%
%\end{remark}

\begin{remark} Clearly $\11_{B_r(x)\cap X}$ lies in $\cS(X)$ if $B_r(x)\cap X$ is compact.
Moreover, every Schwartz-Bruhat function on $X$ is a (finite) $\CC$-linear combination of functions of the form $\11_{B_r(x)\cap X}$ with $B_r(x)\cap X$ compact, and hence, a distribution $u$ on $X$ is determined by its $B$-function.
\end{remark}

\begin{defn}\label{a-D-f}
Let $X$ be a strict $C^1$ submanifold of $K^n$ and let $D_0$ be a function on $X\times \ZZ$.
Say that $D_0$ is a \emph{function on balls} if, for any $r\in\ZZ$ and any $x,x'$ in $X$ such that $B_r(x)\cap X$
equals $B_r(x')\cap X$, one has $D_0(x,r)=D_0(x',r)$. Write $D_0(B)$ for $D_0(x,r)$ if $D_0$ is a function on balls and $B$ is the ball $B_r(x)$.
Say furthermore that $D_0$ is a $B$-function if $D_0$ is the $B$-function of a distribution on $X$. If this is the case, write $u_{D_0}$ to denote this distribution.
\end{defn}

The next (basic) lemma gives precise conditions to be a $B$-function.

\begin{lem}\label{lem:add}
Let $X$ be a strict $C^1$ submanifold of $K^n$ and let $D_0:X\times \ZZ\to\CC$ be a function. Then $D_0$ is a $B$-function if and only if $D_0$ is a function on balls such that $D_0(B)=0$ whenever $B\cap X$ is not compact, and, for any ball $B$ with $B\cap X$ compact and any finite collection of disjoint balls $B_i$ with $\bigcup_i B_i = B$ one has
\begin{equation}\label{add}
\sum_i D_0(B_i) = D_0(B).
\end{equation}
%where $D_0(B)$ stands for $D_0(x,r)$ for any $(x,r)\in X\times \ZZ$ with $B=B_r(x)$.
\end{lem}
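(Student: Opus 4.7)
The plan is to prove both implications, with the forward direction following routinely from linearity and the backward direction carrying the main content.

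For the forward direction, assuming $D_0=D_u$ for some $u\in\cS'(X)$, I would verify the three conditions in turn: that $D_u$ is a function on balls is immediate from the definition, since $\11_{B_r(x)\cap X}$ depends only on the ball $B_r(x)\cap X$; the vanishing on non-compact $B\cap X$ is built into Definition~\ref{D-func}; and \eqref{add} would follow from the pointwise identity $\11_{B\cap X}=\sum_i\11_{B_i\cap X}$ and the $\CC$-linearity of $u$, together with the observation that balls are clopen in $K^n$, so each $B_i\cap X$ is a clopen subset of the compact set $B\cap X$, hence compact, and therefore $\11_{B_i\cap X}\in\cS(X)$.

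For the backward direction, I would extend $D_0$ by zero to balls disjoint from $X$ and define $u_{D_0}$ on $\cS(X)$ as follows: given $\varphi\in\cS(X)$, compactness of the clopen set $\mathrm{supp}(\varphi)$ and local constancy of $\varphi$ yield a decomposition $\varphi=\sum_i a_i\11_{B_i\cap X}$ into finitely many nonzero constants $a_i\in\CC$ times indicator functions of $B_i\cap X$, where the $B_i$ are pairwise disjoint balls of some common valuative radius $r$ with $B_i\cap X$ nonempty and compact; I would then set $u_{D_0}(\varphi):=\sum_i a_i D_0(B_i)$. Granting well-definedness, $\CC$-linearity of $u_{D_0}$ would follow by taking a decomposition compatible with any two given elements $\varphi_1,\varphi_2\in\cS(X)$, and the identity $D_{u_{D_0}}=D_0$ would be immediate by construction on balls $B_r(x)$ with $B_r(x)\cap X$ compact and by hypothesis otherwise.

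The hard step is the independence of $u_{D_0}(\varphi)$ from the chosen decomposition. Given a second decomposition $\varphi=\sum_j b_j\11_{B'_j\cap X}$, my plan is to pick $s\in\ZZ$ large enough so that every $B_i$ and every $B'_j$ is a disjoint union of balls $\{C_k\}$ of valuative radius $s$, apply the additivity \eqref{add} to both decompositions, and rewrite each sum as $\sum_k \gamma_k D_0(C_k)$ for appropriate coefficients $\gamma_k$; for each $C_k$ with $C_k\cap X\ne\emptyset$ one checks $\gamma_k=\sum_{i:C_k\subset B_i}a_i=\sum_{j:C_k\subset B'_j}b_j$ by evaluating $\varphi$ at any point of $C_k\cap X$, while the remaining terms vanish by the extension convention. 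The ultrametric dichotomy that any two balls in $K^n$ are either disjoint or nested is what makes the common refinement $\{C_k\}$ available and is the heart of the argument.
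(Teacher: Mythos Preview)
Your proof is correct and follows essentially the same approach as the paper's: both directions reduce to showing that the value $\sum_i c_i D_0(B_i)$ is independent of the chosen representation of $\varphi$, and both do this by passing to a common refinement into disjoint balls of equal radius and invoking \eqref{add}. Your version is simply more explicit---you spell out the common refinement, the coefficient-matching by evaluation at a point of $C_k\cap X$, and the extension-by-zero convention for balls disjoint from $X$---whereas the paper compresses all of this into two sentences.
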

\begin{proof} The direction from left to right is clear. For the other direction, suppose that $D_0$ is a function on balls and satisfies (\ref{add}) and that $D_0(B)=0$ whenever $B\cap X$ is not compact. We only need to show that the map $u$ sending a Schwartz-Bruhat function $\varphi$ with
\begin{equation}\label{add1}
\varphi = \sum_{i=1}^n c_i \11_{B_i}
\end{equation}
for some complex numbers $c_i$  and some balls $B_i$ in $X$
to
\begin{equation}\label{add2}
\sum_i c_i  D_0(B_i)
\end{equation}
is well-defined, since linearity is clear. By (\ref{add}) and by rewriting if necessary, we can suppose that all the $B_i$ are \textcolor{blue}{disjoint (and even of equal radius)}. But then, again by (\ref{add}), the value in (\ref{add2}) does not depend on the way of writing $\varphi$ as in (\ref{add1}).
\end{proof}

\subsection{Distributions on manifolds} \label{def:pushforward}

Any strict $C^1$ submanifold $X$ of $K^n$ of dimension $\ell$ comes with a natural induced $\ell$-dimensional measure, induced by the submanifold structure $X\subset K^n$. Let us denote this measure by $\mu_X$. If $U\subset X$ is open and $p:K^n\to K^\ell$ is a coordinate projection such that the restriction $p_{|U}:U\to p(U)$ is an isometry (see Remark \ref{rem:isometry}), then the measure
$p_{|U}^{*}(|d{x_{i_1}}\wedge\ldots\wedge dx_{i_\ell}|)$ on $U$ equals the restriction of $\mu_X$ to $U$, where $p$ is the projection sending $x$ to $(x_{i_1},\ldots, x_{i_\ell})$, and where $|d{x_{i_1}}\wedge\ldots\wedge dx_{i_\ell}|$ is the Haar measure on $K^\ell$ which gives measure $1$ to $\cO_K^\ell$.

Let $f:X\subset K^n\to Y\subset K^m$ be a strict $C^1$ morphism between strict $C^1$ submanifolds and let $u$ be a distribution on $X$. Suppose that the restriction of $f$ to the support of $u$ (see Definition \ref{def:support} below) is proper (proper meaning that inverse images of compact subsets are compact). Then, for $\phi\in \cS(Y)$, the composition $\phi\circ f$ lies in $\cS(X)$, and hence, we can define the push-forward $f_*(u)$  as the distribution on $Y$ sending $\phi\in \cS(Y)$ to $u(\phi \circ f)$. (In fact, $f$ being continuous instead of strict $C^1$ is enough to define $f_*(u)$.)

A distribution $u$ on a strict $C^1$ submanifold $X\subset K^n$ can be restricted to a nonempty open $U\subset X$ to a distribution denoted by $u_{|U}$ and which sends $\phi$ in $\cS(U)$ to $u(\varphi)$, where $\varphi$ is the extension by $0$ of $\phi$.

\subsection{Representation, support, singular support}

\begin{defn} Say that a distribution $u$ on a strict $C^1$ submanifold $X$ of $K^n$ is \emph{represented} by a $C^\infty$ function if there is a function $f:X\to \CC$ in $\mathcal C^\infty(X)$ so that for all $\varphi$ in $\cS(X)$ one has
$$
u(\varphi)=   \int_{x\in X}    \varphi(x) f(x) \mu_X.
$$
\end{defn}
\begin{remark}
Vice versa, a $C^\infty$ function $h:X\to\CC$ determines a distribution sending $\varphi$ in $\cS(X)$ to the integral $$
\int_{x\in X} h(x) \varphi(x) \mu_X.
$$
We thus find a map
$\mathcal C^\infty(X)\to \cS'(X)$ which is an injective linear map.
\end{remark}

\begin{defn} \label{def:support} Let $u$ be a distribution on a strict $C^1$ submanifold $X$ of $K^n$. The \emph{zero set} of $u$ is the set of points $x$ of $X$ such that there is a compact open neighborhood $U$ of $x$ where $\11_U u$ is represented by the zero function.
	The \emph{support} of $u$ is the complement in $X$ of the zero set of $u$ and is denoted by $\Supp(u)$.
\end{defn}

\begin{defn} A distribution $u$ on a strict $C^1$ submanifold $X$ of $K^n$ is \emph{smooth} at a point $x$ of $X$, if there is a compact open neighborhood
	$U$ of $x$ in $X$ such that $\11_U u$ is represented by a $C^\infty$ function. The complement in $X$ of the set of smooth points of $u$ is called the \emph{singular support} and denoted by $\Sing(u)$.
\end{defn}

\subsection{Fourier transform and oscillatory integrals}

\begin{defn}\label{def:four} For a distribution $u$ on $K^n$, the \emph{Fourier transform} $\cF(u)$ is the distribution on $K^n$ sending $\varphi$ in $\cS(K^n)$ to $u(\cF(\varphi))$, where $\cF(\varphi)$ is the Fourier transform of $\varphi$ with respect to the character $\psi_K$, namely, for $y\in K^n$,
$$
\cF(\varphi)(y) := \int_{x\in K^n} \varphi(x) \psi_K (x | y) |dx|,
$$
with $|dx|$ the normalized Haar measure on $K^n$ which gives measure $1$ to $\cO_K^n$ and where $x|y = \sum_{i=1}^n x_iy_i$.  Note that $\cF$ is a linear isomorphism from $\cS(K^n)$ to $\cS(K^n)$.
%Note that $\cF(\varphi)$ lies in $\cS(K^n)$
\end{defn}

For various results related to harmonic analysis on local fields, we refer to \cite{taibleson}.
For the convenience of the reader we prove the non archimedean version of the Paley-Wiener theorem \cite[Theorem 7.3.1]{Hormander83}.

\begin{thm}\label{thm:Paley-Wiener}
	Let $u$ be a distribution on $K^n$ with compact support.
	Then the distribution $\mathcal F(u)$ is represented by the following function in $\mathcal C^{\infty}(K^n)$
	$$\mathcal R_\phi : \xi \mapsto \langle u,\phi \psi_K(.\mid \xi) \rangle$$
with $\phi$ a characteristic function of a ball containing the support of $u$. Furthermore, if moreover $\mathcal F(u)$ has compact support then ${\mathcal R_\phi}$ lies in $\cS(K^n)$.
\end{thm}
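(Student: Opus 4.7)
The plan is to first verify that $\mathcal R_\phi\in\cC^\infty(K^n)$ and is independent of the choice of $\phi$, then to establish the distributional identity $\cF(u)=u_{\mathcal R_\phi}$ by reducing to test functions $\varphi=\11_{B_s(\xi_0)}$ for small enough balls, and finally to use local constancy to deduce the Schwartz-Bruhat property when $\Supp(\cF(u))$ is compact.

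First I would show $\mathcal R_\phi\in\cC^\infty(K^n)$. For any $\xi\in K^n$ the function $x\mapsto \phi(x)\psi_K(x\mid\xi)$ is locally constant with compact support, so $\mathcal R_\phi(\xi)=\langle u,\phi\psi_K(\cdot\mid\xi)\rangle$ is defined. Local constancy in $\xi$ follows from the triviality of $\psi_K$ on $\cM_K$: if $\phi=\11_B$ for a ball $B$ of valuative radius $r$ and $\xi,\xi'$ satisfy $\ord(\xi-\xi')$ sufficiently large (depending on $r$ and on the center of $B$), then $\psi_K(x\mid\xi)=\psi_K(x\mid\xi')$ for every $x\in B$, giving $\mathcal R_\phi(\xi)=\mathcal R_\phi(\xi')$. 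Independence of $\phi$: if $\phi_1$ and $\phi_2$ are two such choices, then $\phi_1-\phi_2$ vanishes on $\Supp(u)$ and has compact support, so a standard partition-of-unity argument gives $(\phi_1-\phi_2)u=0$, whence $\mathcal R_{\phi_1}=\mathcal R_{\phi_2}$. It thus suffices to treat $\phi=\11_{B_r}=\11_{B_r(0)}$.

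Next I would verify
$$\langle \cF(u),\varphi\rangle=\int_{K^n}\varphi(\xi)\mathcal R_\phi(\xi)\,|d\xi|$$
for every $\varphi\in\cS(K^n)$. By linearity it suffices to treat $\varphi=\11_{B_s(\xi_0)}$ with $s\geq 1-r$. A direct computation (based on $\int_{B_s(0)}\psi_K(x\mid\eta)\,|d\eta|=q_K^{-ns}\11_{B_{1-s}}(x)$) gives $\cF(\varphi)(x)=q_K^{-ns}\psi_K(x\mid\xi_0)\11_{B_{1-s}}(x)$. Since $r\geq 1-s$, one has $B_r\subset B_{1-s}$, so $\phi\cF(\varphi)=q_K^{-ns}\phi\psi_K(\cdot\mid\xi_0)$, and as $\phi u=u$,
$$\langle \cF(u),\varphi\rangle=\langle u,\cF(\varphi)\rangle=\langle u,\phi\cF(\varphi)\rangle=q_K^{-ns}\mathcal R_\phi(\xi_0).$$
On the other hand, $\mathcal R_\phi$ is constant on $B_s(\xi_0)$ by our choice $s\geq 1-r$, so the integral on the right also equals $q_K^{-ns}\mathcal R_\phi(\xi_0)$.

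For the second assertion, suppose $\Supp(\cF(u))$ is compact. For any $\xi\notin\Supp(\cF(u))$, Definition~\ref{def:support} gives a compact open neighborhood $U$ of $\xi$ on which $\11_U\cF(u)$ is represented by $0$. Choosing $s$ large enough that $B_s(\xi)\subset U$ and $\mathcal R_\phi$ is constant on $B_s(\xi)$, the identity above applied to $\varphi=\11_{B_s(\xi)}$ forces $\mathcal R_\phi(\xi)=0$. Thus $\mathcal R_\phi$ vanishes outside the compact set $\Supp(\cF(u))$, and being locally constant, $\mathcal R_\phi\in\cS(K^n)$. The main obstacle is legitimizing the ``Fubini'' interchange implicit in the identity; in the archimedean setting this requires genuine analytic estimates and is the substance of the classical Paley--Wiener theorem, but in the non-archimedean case local constancy of everything reduces both sides to finite sums, after which the identity follows from the closed-form computation of $\cF(\11_{B_s(\xi_0)})$.
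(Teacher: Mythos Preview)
Your proof is correct and follows essentially the same route as the paper: both reduce the representation identity to test functions $\varphi=\11_{B_s(\xi_0)}$ with $s\geq 1-r$ (the paper's $r\geq 1-R$ with swapped letters), compute $\cF(\varphi)$ explicitly, and use the constancy of $\mathcal R_\phi$ on balls of valuative radius $1-r$. For the second assertion the paper simply invokes the inverse Fourier transform formula, whereas you argue directly from the definition of support; your argument is a bit longer but avoids that citation and is equally valid.
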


\begin{proof}
%We prove that $\cF(\varphi u)$ from definition \ref{Hei:smooth} is a $C^\infty$ function. More generally, the Fourier transform of a distribution on $K^n$ with compact support is represented by a $C^\infty$ function. \\
As the character $\psi_K$ is trivial on $\cM_K$, for any $\xi$ in $K^n$, the function $\psi_K(.\mid \xi)$ is constant on balls of valuative radius
	$1-\ord \xi$. Let $\phi$ be the characteristic function of a ball $B_R$ containing the support of $u$.
	For any $\xi$, the function $\phi \psi_K(.\mid \xi)$ is Schwartz-Bruhat.
	By the same argument, the function $\mathcal R_\phi$ is constant on balls of valuative radius larger than $1-R$ and does not depend on such $\phi$. Indeed, if $\phi'$ is any other characteristic function of a ball containing the support of $u$, then one of these balls is included in the other and by definition of the support, $u$ vanishes on the difference $(\phi-\phi')\psi_K(.\mid \xi)$ for any $\xi$.
	
	We consider $\phi=\11_{B_R}$ as before, and we prove that the distribution $\cF(\phi u)$ is represented by the function
	$\mathcal R_\phi$, namely
	\begin{equation} \label{equality:Fourier}
	\langle \cF(\phi u),\varphi \rangle = \int_{K^n}\langle u,\phi \psi_K(.\mid \xi) \rangle\varphi(\xi)d\xi
        \end{equation}
	for any Schwartz-Bruhat function $\varphi$ in $\cS(K^n)$.
	
	By additivity, it is enough to prove the result for characteristic functions of balls, for instance for $\varphi = \11_{B_r(\xi_0)}$ with
	$r\geq 1-R$.
	By definition of the Fourier transform of distributions, we have
	$$ \langle \cF(\phi u),\varphi \rangle  =  \langle u,\phi \cF\varphi \rangle $$
	and we obtain by computation
	$$ \langle \cF(\phi u),\varphi \rangle  =  \text{vol}(B_r)\langle u,\varphi \11_{B_{1-r}}\psi_{K}(.\mid \xi_0) \rangle.$$
        By assumption on $r$, the function $\mathcal R_\phi$ is constant on the ball $B_r(\xi_0)$ which gives the equality (\ref{equality:Fourier}). The second assertion follows from the inverse Fourier transform formula.
\end{proof}

Let $X$ be an open set of $K^n$, $\phi$ be a Schwartz-Bruhat function in $\cS(X)$ and $p$ be a strict $C^1$ map
from $X\times K^r$ to $K$. For $\lambda$ in $K^{\times}$ and $\eta$ in $K^r$ we set
$$I_{\eta}(p,\phi)(\lambda) : = \int_{X}\phi(x)\psi_K(\lambda p(x,\eta)) dx.$$
The following is a non archimedean stationary phase formula, similar to \cite[Proposition 1.1]{Hei85a} but without restrictions on the characteristic of $K$. The proof is the same as for \cite[Proposition 1.1]{Hei85a}.

\begin{prop}\label{prop:stat}
Let $X\subset K^n$ and $V\subset K^r$ be two open sets.
Let $p$ be a strict $C^1$ map from $X\times V$ to $K$ and $\phi$ be in $\cS(X)$ with support $\Supp(\phi)$.
Assume that there is $\delta>0$ such that for any $(x,\eta)$ in $\Supp(\phi)\times V$ one has
$$
\abs{\grad_x p(x,\eta)}\geq \delta >0.
$$
	Suppose further that $\abs{R(x,y,\eta)}$ is bounded for $x$ and $x+y$ in $\Supp(\phi)$ and $\eta$ in $V$, where $R$ is defined by
$$
p(x+y,\eta)=p(x,\eta) + (\grad_x p(x,\eta)|y) +  (R(x,y,\eta)y|y).
$$
Then, $\lambda \mapsto I_{\eta}(p,\phi)(\lambda)$ has bounded support on $K^{\times}$, with bound independent of $\eta \in V$. Namely, there is an integer $r$ such that, for each $\eta\in V$, the support of $\lambda \mapsto I_{\eta}(p,\phi)(\lambda)$  is contained in the ball $B_r$.
\end{prop}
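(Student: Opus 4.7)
The plan is to reduce to local computations on balls, Taylor-expand $p$ in the $x$-variable on each ball, and exploit that $\psi_K$ is trivial on $\cM_K$ (killing the quadratic remainder) together with the orthogonality of additive characters on $\cO_K$ (killing the linear term).

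Concretely, fix a uniform bound $M = q_K^m$ on $|R(x,y,\eta)|$ and write $\delta = q_K^{-d}$, and let $\rho_0 = \alpha^{+}(\phi)$ so that $\phi$ is constant on every ball of valuative radius $\geq \rho_0$. Given $\lambda\in K^\times$, I choose an integer $\rho = \rho(\lambda)$ with
\[ \max\!\bigl(\rho_0,\ \bigl\lceil (m+1-\ord\lambda)/2\bigr\rceil\bigr) \;\leq\; \rho \;\leq\; -\ord\lambda - d . \]
Such a $\rho$ exists as soon as $\ord\lambda$ is sufficiently negative (explicitly, $\ord\lambda \leq -m - 2d - 1$ and $\ord\lambda \leq -\rho_0 - d$); this threshold depends only on $\delta$, $M$ and $\rho_0$, hence is independent of $\eta$. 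Since $\Supp(\phi)$ is compact and $\rho \geq \rho_0$, I partition $\Supp(\phi)$ into finitely many disjoint balls $B_\rho(a_j)$ on each of which $\phi$ takes a constant value $c_j$, whence
\[ I_\eta(p,\phi)(\lambda) \;=\; \sum_j c_j \int_{B_\rho(a_j)} \psi_K(\lambda p(x,\eta))\,dx . \]

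For each $j$ I translate $x = a_j + y$ with $y \in B_\rho(0)$ and insert the Taylor expansion assumed in the statement:
\[ \int_{B_\rho(a_j)} \psi_K(\lambda p(x,\eta))\,dx = \psi_K(\lambda p(a_j,\eta)) \int_{B_\rho(0)} \psi_K\bigl(\lambda(\grad_x p(a_j,\eta)\mid y)\bigr)\,\psi_K\bigl(\lambda(R(a_j,y,\eta)y\mid y)\bigr)\,dy . \]
The choice $\rho \geq (m+1-\ord\lambda)/2$ forces $\ord\bigl(\lambda (R(a_j,y,\eta)y\mid y)\bigr) \geq \ord\lambda - m + 2\rho \geq 1$, so the remainder character $\psi_K(\lambda(Ry\mid y))$ equals $1$ throughout $B_\rho(0)$, and the integral reduces to
\[ \psi_K(\lambda p(a_j,\eta))\,\int_{B_\rho(0)} \psi_K\bigl(\lambda(\xi_j\mid y)\bigr)\,dy, \qquad \xi_j := \grad_x p(a_j,\eta). \]

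Finally, since $|\xi_j| \geq \delta = q_K^{-d}$, some coordinate $\xi_{j,i}$ satisfies $\ord \xi_{j,i} \leq d$, and then
\[ \ord\lambda + \rho + \ord\xi_{j,i} \;\leq\; \ord\lambda + \rho + d \;\leq\; 0 \]
by the upper bound $\rho \leq -\ord\lambda - d$. A direct computation of $\int_{B_\rho(0)} \psi_K(\lambda(\xi_j\mid y))\,dy$ (change of variable $y = \pi^\rho z$ and orthogonality of the nontrivial character $\psi_K$ on $\cO_K/\cM_K$) shows that this integral is zero. Summing over $j$ gives $I_\eta(p,\phi)(\lambda) = 0$, uniformly in $\eta \in V$, for every $\lambda$ with $\ord\lambda$ below the explicit threshold; taking $r$ to be that threshold yields the claim. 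The only subtle point is to make sure that every constant entering the threshold ($M$, $\delta$, $\rho_0$) is genuinely uniform in $\eta$, which is precisely what the two hypotheses of the proposition guarantee.
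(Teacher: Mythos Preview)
Your proof is correct and is precisely the standard non-archimedean stationary phase argument: partition the support into balls of a well-chosen valuative radius $\rho$, use the Taylor expansion to make the remainder character trivial on each ball, and then kill the remaining linear character integral by orthogonality. The paper does not give its own proof but defers to \cite[Proposition~1.1]{Hei85a}, where exactly this argument appears; your choice of $\rho$ balancing $\rho \geq \lceil (m+1-\ord\lambda)/2\rceil$ against $\rho \leq -\ord\lambda - d$ and the resulting explicit, $\eta$-independent threshold on $\ord\lambda$ match that proof.
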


\subsection{Approximations}

%In the following, let $K$ be the field $\mathbb Q_p$. Let $\Lambda \subset K^\times$ be an open subgroup of finite index in $K^\times$.
%Let $\psi$ be an additive character on $K$, trivial on $(p)$, non trivial on $\mathbb Z_p$.

\begin{defn}
	For $n\geq 1$, for any Schwartz-Bruhat function $\phi$ in $\cS(K^n)$ and distribution $u$ in $\cS'(K^n)$, we define the \emph{convolution  of $u$ by $\phi$} as the function
$$u*\phi : x \mapsto \langle u,\phi(x - .) \rangle.$$
\end{defn}

\begin{remark} The convolution product $u*\phi$ is a locally constant function. Indeed, as a Schwartz-Bruhat function, $\phi$ is constant on balls of radius $\alpha^{+}(\phi)$, and thus, for any $x'$ in $K^n$ with $\ord x' \geq \alpha^{+}(\phi)$ and any $x$ and $y$ in $K^n$, we have
	$$\phi((x+x')-y) = \phi(x-y)$$
 which implies
	$$(u*\phi)(x+x') = (u * \phi)(x).$$
\end{remark}

\begin{prop}[Associativity]
	For any distribution $u$ in $\cS'(K^n)$, for any Schwartz-Bruhat functions $\phi$ and $\varphi$ in $\cS(K^n)$ we have the associativity property
	$$(u*\phi)*\varphi = u*(\phi * \varphi).$$
\end{prop}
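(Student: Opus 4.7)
The plan is to reduce the identity, evaluated at a fixed $x\in K^n$, to an elementary finite computation by exploiting that $\phi$ is locally constant with a uniform modulus of constancy. Concretely, set $s:=\alpha^{+}(\phi)$ and decompose
$$\varphi = \sum_{j=1}^{N} c_j \11_{B^{(j)}}$$
as a finite $\CC$-linear combination of characteristic functions of pairwise disjoint balls $B^{(j)}\subset K^n$ of valuative radius at least $\max(s,\alpha^{+}(\varphi))$. Such a decomposition exists since $\varphi$ is Schwartz--Bruhat, and by choosing the $B^{(j)}$ small enough one also forces $\varphi$ to be constant on each of them. Pick a representative $y_j\in B^{(j)}$ for each $j$.

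The key observation is the following: for every $y\in B^{(j)}$ one has $\ord(y-y_j)\geq s$, hence by definition of $s$, for every $t\in K^n$,
$$\phi((x-y)-t)=\phi((x-y_j)-t).$$
That is, on each $B^{(j)}$ the Schwartz--Bruhat function $t\mapsto \phi((x-y)-t)$ does not depend on $y\in B^{(j)}$.

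Using this constancy, I compute each side in turn. For the left-hand side, using the remark that $u*\phi$ is a locally constant function and that $\varphi$ has compact support,
$$((u*\phi)*\varphi)(x) = \int_{K^n}(u*\phi)(x-y)\,\varphi(y)\,dy = \sum_{j} c_j\int_{B^{(j)}}\langle u,\phi((x-y)-\cdot)\rangle\,dy = \sum_{j} c_j\,\Vol(B^{(j)})\,\langle u,\phi((x-y_j)-\cdot)\rangle.$$
For the right-hand side, integrating the same pointwise identity against $\varphi(y)\,dy$ gives
$$(\phi*\varphi)(x-t) = \int_{K^n}\phi((x-y)-t)\,\varphi(y)\,dy = \sum_{j} c_j\,\Vol(B^{(j)})\,\phi((x-y_j)-t),$$
which is a finite $\CC$-linear combination of Schwartz--Bruhat functions in the variable $t$. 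The linearity of the distribution $u$ then yields
$$(u*(\phi*\varphi))(x) = \langle u,(\phi*\varphi)(x-\cdot)\rangle = \sum_{j} c_j\,\Vol(B^{(j)})\,\langle u,\phi((x-y_j)-\cdot)\rangle,$$
matching the previous expression.

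The only subtle point, which is really the heart of the argument, is the Fubini-type step of interchanging $u$ with the integral against $\varphi(y)\,dy$; this is precisely what the finite decomposition allows, since we have converted that integral into a finite sum, onto which linearity of $u$ applies trivially. Everything else is formal manipulation, and no analytical subtlety remains once the uniform local constancy of $\phi$ has been used to replace the continuous integration in $y$ by a finite sum.
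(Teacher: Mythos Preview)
Your proof is correct and follows essentially the same approach as the paper: reduce the $y$-integral to a finite sum via local constancy, so that the interchange of $u$ with integration becomes plain linearity. The only difference is organizational: the paper reduces \emph{both} $\phi$ and $\varphi$ to characteristic functions of balls (with the radius of $\phi$'s ball strictly larger than that of $\varphi$'s) and then computes everything explicitly, including the convolution $\11_{B_r(b)}*\11_{B_\eta(a)}$; you instead keep $\phi$ general and use its modulus of constancy $\alpha^{+}(\phi)$ directly, decomposing only $\varphi$. Your version is a mild streamlining of the same idea, not a genuinely different route.
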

\begin{proof} By decomposition of a Schwartz-Bruhat function as a linear combination of characteristic functions of balls and by linearity of the product it is enough to prove the property in the case of $\phi = \11_{B_r(b)}$ and $\varphi = \11_{B_\eta(a)}$ with $r>\eta$.
In that case we have,
$$\begin{array}{ccl}
	(u*1_{B_r(b)})*1_{B_\eta(a)}(z) & = & \int_{K^n} \langle u,x \mapsto \11_{B_r(b)}(y-x)\rangle 1_{B_\eta(a)}(z-y)dy \\ \\
	                                & = & \int_{B_\eta(z-a)} \langle u,x \mapsto \11_{B_r(b)}(y-x)\rangle  dy \\ \\
	 			        & = & \sum_{i} \int_{B_r(y_i)} \langle u,x \mapsto \11_{B_r}(y-x-b)\rangle  dy
  \end{array}
$$
the last summation is finite and comes from the decomposition
$$B_\eta(z-a)=\bigsqcup_{i} B_r(y_i).$$
For any $i$, for any $y$ in the ball $B(y_i,r)$ we have the equivalence
$$\ord y-b-x \geq r \Leftrightarrow \ord y_i-b-x \geq r$$
and we have
$$\begin{array}{ccl}
	(u*1_{B_r(b)})*1_{B_\eta(a)}(z) & = & \sum_{i} \int_{B_r(y_i)} \langle u,x \mapsto \11_{B_r(b)}(y_i-x)\rangle  dy \\ \\
	& = & \sum_{i} \text{vol}(B_r) \langle u,x \mapsto \11_{B_r(y_i)}(x+b)\rangle  \\ \\
				        & = & \text{vol}(B_r) \langle u,x \mapsto \11_{B_\eta(z-a)}(x+b)\rangle  \\ \\
				        & = &\text{vol}(B_r) \langle u,x \mapsto \11_{B_\eta(a+b)}(z-x)\rangle
  \end{array}
$$
As $r>\eta$,  we have the following equivalences for any $z\in K^n$
$$B_r(b) \cap B_\eta(z-a) \neq \emptyset \Leftrightarrow B_r(b) \cap B_\eta(z-a) = B_r(b) \Leftrightarrow b \in B_{\eta}(z-a)
\Leftrightarrow z \in B_{\eta}(a+b).$$
We deduce the equalities
$$\11_{B_r(b)}*1_{B_\eta(a)}(z) = \int_{K^n} \11_{B_r(b)}(y)1_{B_\eta(z-a)}(y)dy = \text{vol}(B_r) \11_{B_\eta(a+b)}(z).$$
and by definition of the convolution product we obtain the equality
$$\left(u*(1_{B_r(b)}*1_{B_\eta(a)})\right)(z) = \left((u*1_{B_r(b)})*1_{B_\eta(a)}\right)(z).$$
\end{proof}

\begin{defn}\label{def:topS'}
	A sequence of distributions $u_\ell$ in $\cS'(K^n)$ for $\ell\in\NN$ is said to converge to a distribution $u$ in $\cS'(K^n)$ if and only if for any Schwartz-Bruhat function $\varphi$ in $\cS(K^n)$, the sequence of complex numbers $(\langle u_\ell,\varphi\rangle )_\ell$ converges to $\langle u,\varphi\rangle $.
\end{defn}

\begin{prop} \label{prop:uniform-convergence}
Let $(u_\ell)$ be a sequence in $\cS'(K^n)$ which converges to a distribution $u$. For any Schwartz-Bruhat function $\varphi$ in
$\cS(K^n)$, the sequence of functions $\mathcal F(\varphi u_\ell)$ converges uniformly to the function $\mathcal F(\varphi u)$ on any compact set of $K^n$.
\end{prop}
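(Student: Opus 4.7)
The plan is to combine the explicit representation of $\cF(\varphi u_\ell)$ provided by the Paley--Wiener theorem (Theorem \ref{thm:Paley-Wiener}) with a local-constancy-in-the-frequency-variable argument to reduce uniform convergence on a compact set to pointwise convergence against finitely many fixed test functions.

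First, since $\varphi\in\cS(K^n)$ has compact support, $\varphi u_\ell$ and $\varphi u$ are distributions with compact support contained in some ball $B_R$. By Theorem \ref{thm:Paley-Wiener} (taking $\phi=\11_{B_R}$ and absorbing it into $\varphi$), the distributions $\cF(\varphi u_\ell)$ and $\cF(\varphi u)$ are represented by the $\mathcal C^\infty$ functions
\[
\xi\longmapsto \langle u_\ell,\varphi\,\psi_K(\,\cdot\mid\xi)\rangle
\quad\text{and}\quad
\xi\longmapsto \langle u,\varphi\,\psi_K(\,\cdot\mid\xi)\rangle,
\]
respectively. So the problem becomes showing that these scalar-valued functions converge uniformly in $\xi$ on any compact set.

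Next, I would exploit the fact that $\xi\mapsto \varphi\,\psi_K(\,\cdot\mid\xi)$, viewed as a map from $K^n$ into $\cS(K^n)$, is locally constant. Indeed, $\psi_K$ is trivial on $\cM_K$, and $\varphi$ is supported in $B_R$, so whenever $\ord(\xi-\xi')\geq 1-R$ one has $\varphi(x)\psi_K(x\mid\xi)=\varphi(x)\psi_K(x\mid\xi')$ for every $x\in K^n$; that is, $\varphi\,\psi_K(\,\cdot\mid\xi)$ and $\varphi\,\psi_K(\,\cdot\mid\xi')$ are the \emph{same} element of $\cS(K^n)$.

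Finally, given a compact set $C\subset K^n$, cover it by finitely many disjoint balls $B^{(1)},\ldots,B^{(N)}$ of valuative radius $1-R$, and pick $\xi_j\in B^{(j)}$. For every $\xi\in B^{(j)}$ one has
\[
\cF(\varphi u_\ell)(\xi)=\langle u_\ell,\varphi\,\psi_K(\,\cdot\mid\xi_j)\rangle,
\qquad
\cF(\varphi u)(\xi)=\langle u,\varphi\,\psi_K(\,\cdot\mid\xi_j)\rangle.
\]
Applying Definition \ref{def:topS'} to each of the $N$ fixed Schwartz-Bruhat functions $\varphi\,\psi_K(\,\cdot\mid\xi_j)$, one gets, for any $\varepsilon>0$, an integer $\ell_0$ so that $|\langle u_\ell-u,\varphi\,\psi_K(\,\cdot\mid\xi_j)\rangle|<\varepsilon$ for all $\ell\geq \ell_0$ and all $j\in\{1,\ldots,N\}$; this is uniform convergence on $C$. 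There is no real obstacle here: the essential ingredient is just the observation that only finitely many values of $\xi$ genuinely enter into the statement on $C$, which is precisely the $p$-adic analogue of the fact that $\mathcal F(\varphi u)$ is of ``bounded frequency'' once $\varphi u$ has compact support.
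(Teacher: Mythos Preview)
Your proof is correct and rests on the same key observation as the paper: since $\varphi$ is supported in $B_R$, all the functions $\cF(\varphi u_\ell)$ are constant on balls of valuative radius $1-R$, uniformly in $\ell$. The paper phrases this as equicontinuity and then invokes Ascoli--Arzel\`a (or gives a direct contradiction argument), whereas you go one step further and simply note that a compact set is covered by finitely many such balls, reducing uniform convergence to convergence against finitely many fixed test functions. Your route is slightly more elementary---it avoids the pointwise-boundedness hypothesis needed for Ascoli--Arzel\`a---but the essential content is the same.
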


\begin{proof}
	Let $\varphi$ be a Schwartz-Bruhat function in $\cS(K^n)$. For any $\xi \in K^n$, the function
	$x\mapsto \varphi(x)\psi_K(x\mid \xi)$ is a Schwartz-Bruhat function in $\cS(K^n)$, constant on balls of radius
	$\max(\alpha^{+}(\varphi), 1-\ord \xi)$.
	Let $\ell$ be an integer. The function $\mathcal F(\varphi u_\ell)$ is $\xi\mapsto \langle u_\ell,\varphi \psi_K(.\mid \xi)\rangle $.
	In particular by definition of convergence in $\cS'$, the sequence of functions $\mathcal F(\varphi u_\ell)$
	converges pointwise to the function $\mathcal F(\varphi u)$.
	Furthermore, as the support of $\varphi$ is included in the ball $B_{\alpha^{-}(\varphi)}$,
	the function $\mathcal F(\varphi u_\ell)$ is \textcolor{blue}{constant} on balls of radius $1-\alpha^{-}(\varphi)$.
	The family of functions $\mathcal F(\varphi u_\ell)$ is then equicontinuous on $K^n$. This family is also pointwise bounded. By Ascoli-Arzela theorem, this family converges uniformly to $\mathcal F(\varphi u)$ on any compact set.
	
Alternatively, one can also prove the result without using Ascoli-Arzela theorem, as follows.
	We prove first the uniform convergence on any ball of radius larger than $1-\alpha^{-}(\varphi)$. The result for any compact set follows immediately by the Borel-Lebesgue property. Let $B$ be such a ball. If the convergence is not uniform
	then, there is $\varepsilon>0$ such that for any $M>0$ there is $m>M$ and $\xi_m \in B$ such that
	$\abs{\mathcal F(\varphi u_m)(\xi_m) - \mathcal F(\varphi u)(\xi_m)}> \varepsilon$.
	As $B$ is compact, the sequence $(\xi_m)$ has a limit point $\xi$. We can assume $(\xi_m)$ converges to $\xi$.
	By equicontinuity of the sequence $(\mathcal F(\varphi u_m))$ and by continuity of $\mathcal F(\varphi u)$ there is an integer $r$ such for any $\xi'$ in $B_r(\xi)$ and for any $m$ we have
	$$\mathcal F(\varphi u_m)(\xi') = \mathcal F(\varphi u_m)(\xi) \: \text{and} \:
          \mathcal F(\varphi u)(\xi') = \mathcal F(\varphi u)(\xi).$$
	Furthermore, by convergence of $(\xi_m)$ to $\xi$ and $(\mathcal F(\varphi u_m)(\xi))$ to
	$\mathcal F(\varphi u)(\xi)$, there is a bound $N$ such that for any $m\geq N$, we have $\xi_m \in B_r(\xi)$ implying the equalities
	$$\mathcal F(\varphi u_m)(\xi_m) = \mathcal F(\varphi u_m)(\xi) \: \text{and} \:
          \mathcal F(\varphi u)(\xi_m) = \mathcal F(\varphi u)(\xi),$$
	and the inequality
	$$\abs{\mathcal F(\varphi u_m)(\xi) - \mathcal F(\varphi u)(\xi)} \leq \epsilon.$$
        All of that implies
        $$\abs{\mathcal F(\varphi u_m)(\xi_m) - \mathcal F(\varphi u)(\xi_m)} \leq \epsilon.$$
	which is a contradiction.

\end{proof}

\begin{prop} \label{prop:approximation}
	Let $(\Phi_\ell)$ be a sequence of Schwartz-Bruhat functions of $\cS(K^n)$ supported on a neighborhood $U_\ell$ of zero such that
$$
\bigcap_\ell U_\ell = \{0\}
$$ and such that for each $\ell$
$$
\int_{K^n} \Phi_\ell(x)dx = 1.
$$
	Let $u$ be in $\cS'(K^n)$. Then the sequence $(u*\Phi_\ell)$ converges to $u$ in $\cS'(K^n)$.
\end{prop}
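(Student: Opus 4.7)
The plan is to show that, for any fixed Schwartz-Bruhat $\varphi\in\cS(K^n)$, one has the exact equality $\langle u*\Phi_\ell,\varphi\rangle=\langle u,\varphi\rangle$ for all sufficiently large $\ell$, which then implies the claimed convergence in $\cS'(K^n)$ (Definition \ref{def:topS'}). The three steps are as follows.

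First, I would use the hypothesis $\bigcap_\ell U_\ell=\{0\}$ together with the compactness of the supports of the $\Phi_\ell$'s to arrange that $\mathrm{Supp}(\Phi_\ell)\subset B_{\alpha^+(\varphi)}$ for all $\ell$ large enough (this is the natural reading of the shrinking condition: for any ball $B$ around $0$, eventually $U_\ell\subset B$). I would then decompose $\varphi$ as a finite $\CC$-linear combination $\varphi=\sum_j c_j\11_{B^{(j)}}$ of characteristic functions of disjoint balls $B^{(j)}=B_r(x_j)$ of radius $r:=\max(\alpha^+(\varphi),\alpha^+(\Phi_\ell))$. Because $x\mapsto\Phi_\ell(x-y)$ is constant on translates of $B_{\alpha^+(\Phi_\ell)}$ for every $y$, the convolution $u*\Phi_\ell$ is constant on each $B^{(j)}$.

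Second, by local constancy of both $\varphi$ and $u*\Phi_\ell$ on the $B^{(j)}$, the integral pairing reduces to a finite sum:
\[
\langle u*\Phi_\ell,\varphi\rangle=\int_{K^n}\varphi(x)(u*\Phi_\ell)(x)\,dx=\sum_j c_j\,\mathrm{vol}(B^{(j)})\,\langle u,\Phi_\ell(x_j-\cdot)\rangle.
\]
Pulling the finite sum inside $u$ by $\CC$-linearity yields $\langle u*\Phi_\ell,\varphi\rangle=\langle u,\psi_\ell\rangle$, where
\[
\psi_\ell(y):=\sum_j c_j\,\mathrm{vol}(B^{(j)})\,\Phi_\ell(x_j-y)=\int_{K^n}\varphi(x)\,\Phi_\ell(x-y)\,dx,
\]
the last equality coming from evaluating the defining integral of $\psi_\ell$ block by block on the $B^{(j)}$ (where $\varphi$ is constant and $\Phi_\ell(x-y)=\Phi_\ell(x_j-y)$).

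Third, I would perform the substitution $z=x-y$ to obtain $\psi_\ell(y)=\int \Phi_\ell(z)\,\varphi(z+y)\,dz$. By the choice of $\ell$ in the first step, $\Phi_\ell(z)\ne 0$ forces $z\in B_{\alpha^+(\varphi)}$, so $z+y$ lies in the same ball of constancy of $\varphi$ as $y$, and $\varphi(z+y)=\varphi(y)$. Pulling $\varphi(y)$ out and using $\int\Phi_\ell=1$ gives $\psi_\ell=\varphi$ on the nose. Thus $\langle u*\Phi_\ell,\varphi\rangle=\langle u,\varphi\rangle$ for all sufficiently large $\ell$, which is stronger than needed.

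The only nontrivial point is the ``finite Fubini'' in the second step: moving the integral in $x$ inside the distribution $u$ acting on $\Phi_\ell(x-\cdot)$. In the archimedean setting this requires a true continuity argument, but here the combination of compact support of $\varphi$ and joint local constancy of $\varphi$ and $x\mapsto\Phi_\ell(x-\cdot)$ turns the pairing into a finite sum over balls, so $\CC$-linearity of $u$ is all that is needed.
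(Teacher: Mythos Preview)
Your proof is correct and follows essentially the same approach as the paper: both establish $\langle u*\Phi_\ell,\varphi\rangle=\langle u,\psi_\ell\rangle$ with $\psi_\ell(y)=\int\Phi_\ell(z)\varphi(y+z)\,dz$, then observe $\psi_\ell=\varphi$ once $\mathrm{Supp}(\Phi_\ell)\subset B_{\alpha^+(\varphi)}$. The only difference is that the paper obtains the first identity in one line by invoking the associativity proposition $(u*\Phi_\ell)*\tilde\varphi=u*(\Phi_\ell*\tilde\varphi)$ proved just before, whereas you re-derive the needed special case directly via your finite-sum ``Fubini'' over balls.
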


\begin{proof}
	For any $\ell$, we denote by $u_{\Phi_\ell}$ the locally constant function $u*\Phi_\ell$. This function defines a distribution. For any Schwartz-Bruhat function $\varphi$ in $\cS(K^n)$ we have
	$$ \langle u_{\Phi_\ell}, \varphi\rangle  =  \int_{K^n} u_{\Phi_\ell}(y) \varphi(y) dy =  (u_{\Phi_\ell}*\tilde{\varphi})(0)$$
	with $\tilde{\varphi}(y)=\varphi(-y)$.
	By associativity we conclude
	$$\langle u_{\Phi_\ell},\varphi \rangle = u*(\Phi_\ell * \tilde{\varphi})(0) = \left<u,x \mapsto \int_{K^n} \Phi_\ell(y)\varphi(x+y) dy\right>.$$
	When $\ell$ goes to infinity, the support of $\Phi_\ell$ goes to $\{0\}$, in particular there is $\ell_0$ such that for any $\ell\geq l_0$,
	the support of $\Phi_\ell$ is included in the ball $B_{\alpha^{+}(\varphi)}$ where $\varphi$ is constant and hence
	$\left<u_{\Phi_\ell},\varphi\right> = \left<u_{\Phi},\varphi\right>.$
\end{proof}

\begin{prop} \label{SB-approx-S'}
	Let $u$ be a distribution in $\cS'(K^n)$. There is a sequence of Schwartz-Bruhat functions $(u_j)$ in $\cS(K^n)$ such that the family of distributions $(u_j)$ converges to $u$.
\end{prop}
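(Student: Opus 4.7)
The plan is to combine the mollification from Proposition \ref{prop:approximation} with a compactly-supported truncation in order to get genuine Schwartz-Bruhat functions rather than just locally constant ones.

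First I would pick, as in Proposition \ref{prop:approximation}, a sequence $(\Phi_j)$ of Schwartz-Bruhat functions on $K^n$ with $\int \Phi_j = 1$ and $\mathrm{Supp}(\Phi_j) \subset U_j$ where the neighborhoods $U_j$ of $0$ satisfy $\bigcap_j U_j = \{0\}$; for instance one may take $\Phi_j = q_K^{jn}\,\11_{B_j}$. By that proposition the convolutions $u * \Phi_j$ converge to $u$ in $\cS'(K^n)$. The remark following the definition of convolution shows that each $u * \Phi_j$ is locally constant on $K^n$, but it need not have compact support.

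Next I would truncate. Set
$$
u_j := \11_{B_{-j}} \cdot (u * \Phi_j).
$$
Since $u * \Phi_j$ is locally constant and $\11_{B_{-j}}$ is the characteristic function of a ball, the product $u_j$ is locally constant with compact support, hence a Schwartz-Bruhat function in $\cS(K^n)$.

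Now I would check convergence. Let $\varphi \in \cS(K^n)$. Its support is contained in some ball $B_{\alpha^{-}(\varphi)}$, so as soon as $j$ satisfies $-j \leq \alpha^{-}(\varphi)$ we have $\varphi \cdot \11_{B_{-j}} = \varphi$. For such $j$,
$$
\langle u_j, \varphi \rangle = \int_{K^n} \11_{B_{-j}}(x)\,(u*\Phi_j)(x)\,\varphi(x)\,dx = \int_{K^n}(u*\Phi_j)(x)\,\varphi(x)\,dx = \langle u * \Phi_j, \varphi \rangle,
$$
where in the last step $u * \Phi_j$ is viewed as a distribution via the injection $\mathcal{C}^\infty(K^n) \hookrightarrow \cS'(K^n)$. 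By Proposition \ref{prop:approximation}, $\langle u * \Phi_j, \varphi \rangle \to \langle u, \varphi \rangle$, which concludes the proof.

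The only slightly delicate point is guaranteeing that the truncation $\11_{B_{-j}}$ does not interact with $\varphi$; but since Schwartz-Bruhat test functions are compactly supported, this is automatic for $j$ large enough, so there is no real obstacle here — the statement is essentially a formal combination of Proposition \ref{prop:approximation} and the observation that convolution with a Schwartz-Bruhat function yields a locally constant function.
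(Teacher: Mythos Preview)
Your proof is correct and follows essentially the same strategy as the paper: mollify with $\Phi_j$ and truncate to get compact support. The only difference is the order of operations: the paper defines $u_j = (\chi_j u) * \Phi_j$ (truncate $u$ first, then convolve), whereas you set $u_j = \11_{B_{-j}} \cdot (u * \Phi_j)$ (convolve first, then truncate the result); both work, and your version invokes Proposition~\ref{prop:approximation} as a clean black box, though note that the paper's specific construction $(\chi_j u)*\Phi_j$ is reused verbatim in the proof of Proposition~\ref{SB-approx-S':bis}.
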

\begin{proof}
	Let $u$ be a distribution in $\cS'(K^n)$. Let $(\chi_j)$ a sequence of Schwartz-Bruhat functions in $\cS(K^n)$ such that for any compact set $C$ of $K^n$ there is an index $j_0$ such that for any $j\geq j_0$, $\chi_{j\mid C}=1$.
	Let $(\Phi_j)$ be a sequence of Schwartz-Bruhat functions in $\cS(K^n)$ such that the support of $\Phi_j$ converges to $\{0\}$
	and such that for any $j$ one has $\int \Phi_j(x)dx = 1$. For any $j$, the convolution product $(\chi_j u)*\Phi_j$ denoted by $u_j$ is a Schwartz-Bruhat functions in $\cS(K^n)$. For any $\varphi$ in $\cS(K^n)$ we have
	$$\langle u_j,\varphi\rangle = \left<u , x \to \chi_j(x)\int_{K^n} \Phi_j(y) \varphi(x+y) dy \right>.$$
Write $\tilde{\varphi}(y)=\varphi(-y)$.
	Since the support of the convolution $\Phi_j * \tilde{\varphi}$ is included in the sumset
	$\Supp  \Phi_j + \Supp  \tilde{\varphi}$, since the support of $\Phi_j$ converges to $\{0\}$ and since any compact set is included in the support of $\chi_j$ for $j$ sufficiently large, there is $j_0$ such that
	$$\Supp  \Phi_j + \Supp  \tilde{\varphi} \subset \Supp  \chi_j.$$
This implies the equality
	$$\langle u_j,\varphi\rangle  = \left<u , x \to \int_{K^n} \Phi_j(y) \varphi(x+y) dy \right>.$$
	By the previous proposition and its proof, we deduce
	$$\langle u_j,\varphi\rangle  = \langle u_{\Phi_j},\varphi\rangle  \: \to \: \langle u,\varphi\rangle .$$
\end{proof}

\subsection{Tensor product of distributions}
Let $X_1$ and $X_2$ be two open sets of $K^{n_1}$ and $K^{n_2}$. Let $u_1 \in \mathcal C(X_1)$ and $u_2 \in \mathcal C(X_2)$ be two continuous functions. The tensor product $u_1 \otimes u_2$ is defined on $X_1\times X_2$ as the function
$$u_1 \otimes u_2 (x_1,x_2)= u_1(x_1)u_2(x_2).$$
It follows from Fubini theorem that for any $\varphi_1 \in \cS(X_1)$ and $\varphi_2 \in \cS(X_2)$ we have
$$\int \int (u_1 \otimes u_2) (\varphi_1 \otimes \varphi_2)  = \left(\int u_1 \varphi_1 \right)\left(\int u_2 \varphi_2 \right).$$
\begin{thm} If $u_1 \in \cS'(X_1)$ and $u_2 \in \cS'(X_2)$, there is a unique distribution $u \in \cS'(X_1 \times X_2)$ such that
	\begin{equation} \label{equation:propriete-tensorielle}
		\langle u,\varphi_1 \otimes \varphi_2 \rangle  = \langle u_1,\varphi_1\rangle \langle u_2,\varphi_2\rangle ,
	\end{equation}
	for all $\varphi_1$ in $\cS(X_1)$ and $\varphi_2$ in $\cS(X_2)$.
	This distribution is denoted by $u_1\otimes u_2$.\\
	Furthermore, for any $\varphi \in \cS(X_1\times X_2)$, we have
	\begin{equation} \label{equation:description-produit-tensoriel}
	\begin{array}{ccl}
		\langle u,\varphi\rangle  & = & \left<u_1,x_1 \mapsto \langle u_2,x_2 \mapsto \varphi(x_1,x_2)\rangle \right> \\
		            & = & \left<u_2,x_2 \mapsto \langle u_1,x_1 \mapsto \varphi(x_1,x_2)\rangle \right>.
	\end{array}
        \end{equation}
\end{thm}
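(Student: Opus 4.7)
The plan is to leverage the fact that, in the ultrametric setting, every Schwartz-Bruhat function on a product is a finite linear combination of characteristic functions of product balls, which reduces the whole theorem to bookkeeping. First I would observe that any $\varphi\in\cS(X_1\times X_2)$ can be written as a finite sum $\varphi=\sum_{i=1}^N c_i\, \11_{B^{(1)}_i}\otimes \11_{B^{(2)}_i}$ with balls $B^{(j)}_i\subset X_j$: indeed $\Supp(\varphi)$ is compact and $\varphi$ is locally constant, hence uniformly locally constant on its support, so $\varphi$ is constant on a finite partition of $\Supp(\varphi)$ into product balls of a common valuative radius. Since these pure tensors span $\cS(X_1\times X_2)$ over $\CC$, uniqueness of any $u$ satisfying (\ref{equation:propriete-tensorielle}) is immediate from linearity.

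For existence I would define $u$ directly by the first iterated formula in (\ref{equation:description-produit-tensoriel}). For any $\varphi\in \cS(X_1\times X_2)$ and any $x_1\in X_1$, the slice $\varphi_{x_1}\colon x_2\mapsto \varphi(x_1,x_2)$ is Schwartz-Bruhat on $X_2$, so $F_\varphi(x_1):=\langle u_2,\varphi_{x_1}\rangle$ is well-defined. Applying the decomposition of the previous paragraph yields
$$
F_\varphi=\sum_{i=1}^N c_i\, \langle u_2,\11_{B^{(2)}_i}\rangle\, \11_{B^{(1)}_i},
$$
which lies in $\cS(X_1)$. Setting $\langle u,\varphi\rangle:=\langle u_1,F_\varphi\rangle$ therefore defines a linear functional on $\cS(X_1\times X_2)$, i.e.\ a distribution. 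On pure tensors one computes $F_{\varphi_1\otimes\varphi_2}(x_1)=\varphi_1(x_1)\langle u_2,\varphi_2\rangle$, which gives (\ref{equation:propriete-tensorielle}) at once.

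The symmetric formula in (\ref{equation:description-produit-tensoriel}) follows by repeating the construction with the roles of $X_1$ and $X_2$ interchanged to obtain a distribution $u'$; since both $u$ and $u'$ satisfy (\ref{equation:propriete-tensorielle}), the uniqueness already proved forces $u=u'$. The only genuine point to watch is the verification that $F_\varphi$ is Schwartz-Bruhat on $X_1$, which is exactly where the finite ball decomposition is used; without that ultrametric feature the argument would require an approximation or continuity input, but here everything is finite-dimensional and the remainder is pure linearity.
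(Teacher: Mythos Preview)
Your proposal is correct and follows essentially the same route as the paper: both arguments rest on the observation that a ball $B_r((a_1,a_2))$ in $X_1\times X_2$ factors as $B_r(a_1)\times B_r(a_2)$, so every Schwartz--Bruhat function on the product is a finite $\CC$-linear combination of pure tensors of characteristic functions of balls, which forces uniqueness and makes existence a matter of checking well-definedness. Your write-up is somewhat more explicit than the paper's (you construct $u$ via the iterated pairing and verify $F_\varphi\in\cS(X_1)$, whereas the paper simply invokes the ball decomposition directly), but the underlying idea is the same.
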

\begin{proof} A Schwartz-Bruhat function $\varphi$ in $\cS(X_1\times X_2)$ is a linear combination of characteristic functions of balls of
	$X_1\times X_2$. Futhermore, for any ball $B((a_1,a_2),r)$ of $X_1 \times X_2$ the characteristic function $\11_{B((a_1,a_2),r)}$ of the ball is the product of $\11_{B(a_1,r)}.1_{B(a_2,r)}$. This remark with the constraint \ref{equation:propriete-tensorielle} gives existence and uniqueness of the tensor product $u_1\otimes u_2$. The equalities \ref{equation:description-produit-tensoriel} follow by computation from this observation.
\end{proof}

\subsection{Wave front sets}
%From here on, we use terminology from Heifetz \cite{Hei85a}.
The wave front set $\WF(u)$ of a distribution $u$ defined on $\mathbb R^n$ is a part of $\mathbb R^n \times \big(\mathbb R^{n}\setminus \{0\}\big)$ which is conical in the second argument: for any $(x,\xi)$ in $\WF(u)$, for any positive real number $\lambda$, $(x,\lambda \xi)$ belongs to $\WF(u)$. In the non archimedean context, the analogous of the mutliplicative subgroup $\mathbb R_{>0}$ is given by Heifetz in \cite{Hei85a} as open subgroups of finite index in $K^\times$.

Let $\Lambda\subset K^\times$ be an open subgroup of finite index in $K^\times$. The intersection of such a subgroup $\Lambda$ with $\cO_K^\times$ is a finite index subgroup $\Lambda_1$ of $\cO_K^\times$. Moreover, if $a$ is an element of $\Lambda$ with minimal positive valuation among the elements of $\Lambda$, then $\Lambda = \bigcup_{i\in\ZZ} a^i \Lambda_1$. For the remainder of Section \ref{sec:non-arch} we keep $\Lambda$ fixed.

For any $n$, we consider the action of $\Lambda$ on $K^n\setminus\{0\}$ by multiplication.
This action induces an equivalence relation on $K^n\setminus\{0\}$, we denote by $S_{\Lambda}^{(n)}$ the quotient space and
we identify it with a compact subspace of $K^n$.
%Note that with $a$ and $\Lambda_1$ as above, $S_{\Lambda}^{(n)}$ is in bijection to the quotient of $\{x\in K^n \setminus\{0\} \mid \min_{i=1}^n \ord x_i < a \}$ under the action of $\Lambda_1$.
%For example, when $\Lambda$ is such that $\Lambda_1=\cO_K^\times$ and such that an element of minimal positive valuation has valuation $2$, then $S_{\Lambda}^{(n)}$ .\\

%Let us fix an additive character $\psi_K$ on $K$ which is trivial on $\cM_K$ and nontrivial on $\cO_K$.

We define $\Lambda$-smooth points and $\Lambda$-wave fronts, generalizing Section 2, page 288 of \cite{Hei85a} (which only treats the characteristic zero case, and only on $K$-analytic manifolds). The definition uses the group $\Lambda$ and the character $\psi_K$ as fixed above.\footnote{Heifetz \cite{Hei85a} uses a different conductor than we do for $\psi_K$, but this essentially only affects explicit factors for inverse Fourier transformation and for other explicit calculations.}

First we give the definitions for distributions on an open $U\subset K^n$. Let $u$ be a distribution in $\cS'(U)$. According to the Paley-Wiener Theorem (Theorem~\ref{thm:Paley-Wiener}), $u$ is smooth in a neighborhood of a point $x$ in $U$, namely $x$ does not belong to the singular support $\Sing(u)$, if and only if there exists a Schwartz-Bruhat function $\varphi$ in $\cS(U)$ such that $\varphi(x)\neq 0$ and such that
$\mathcal F(\varphi u)$ has bounded support. In particular, if $u$ is not smooth in a neighborhood of $x$, it is natural to consider the set of critical directions in which  $\mathcal F(\varphi u)$ is not eventually vanishing. This idea underlies the following definitions.

\begin{defn}\label{Hei:smooth}
	Let $U\subset K^n$ be open and let $u$ be a distribution on $U$. Let $(x_0,\xi_0)$ be in $U\times (K^n\setminus\{0\})$.  Say that $u$ is \emph{$\Lambda$-micro-locally smooth} or \emph{$\Lambda$-smooth} at $(x_0,\xi_0)$ if there are open neighborhoods $U_0$ of $x_0$ and $V_0$ of $\xi_0$ such that for any Schwartz-Bruhat function $\varphi$ with support contained in $U_0$ there is an integer $N$ (which may depend on $U_0$, $V_0$ and $\varphi$) such that for all $\lambda\in\Lambda$ with $\ord \lambda < N$ one has for all $\xi$ in $V_0$ that
\begin{equation}\label{Hei:p}
\cF(\varphi u) (\lambda\xi) = 0.
\end{equation}
\end{defn}

%In Definition \ref{Hei:smooth},  $\varphi u$ is the distribution on $K^n$ which sends $\phi$ in $\cS(K^n)$ to $u (\varphi \cdot \phi)$, and, $\cF(\varphi u)$ is its Fourier transform.
% %of the distribution $\varphi u$ on $K^n$, where $\varphi u$ is such that it sends $\phi$ in $\cS(K^n)$ to $u (\varphi \cdot \phi)$.
%By definition, $\cF(\varphi u)$  is a distribution, but, since $\varphi u$ is compactly supported, it is classical that $\cF(\varphi u)$ is in fact represented by a $C^\infty$ function on $K^n$, similarly as in theorem 7.1.14 of \cite{Hormander83}; this function is also denoted by $\cF(\varphi u)$ and is evaluated at $\lambda\xi := (\lambda\xi_1,\ldots,\lambda\xi_n)$ in (\ref{Hei:p}).\\
%

\begin{defn}\label{defn:wf}
Let $u$ be a distribution on an open $U\subset K^n$.
The complement in $U\times (K^n\setminus \{0\})$ of the set of $(x_0,\xi_0)$ at which $u$ is $\Lambda$-smooth, is called the
\emph{$\Lambda$-wave front set} of $u$, and is denoted by $\WF_\Lambda(u)$.
\end{defn}

By compactness and the definitions, the wave front set lies above the singular support, as follows. We leave the details of the proof to the reader.
\begin{prop}
	Let $u$ be a distribution on an open $U\subset K^n$. Let $\pi$ be the projection from  $K^n \times K^n\setminus \{0\}$ to $K^n$. Then the projection
	$\pi(\WF_\Lambda(u))$ equals the singular support $\Sing (u)$.
\end{prop}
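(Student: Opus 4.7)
\medskip\noindent\textbf{Proof plan.}
The plan is to prove the two inclusions separately. The inclusion $\pi(\WF_\Lambda(u))\subset \Sing(u)$ is the easier one and follows by contraposition: suppose $x_0\notin\Sing(u)$, so that $u$ is represented by a $C^\infty$ function on some compact open neighborhood $W\subset U$ of $x_0$. For any Schwartz-Bruhat $\varphi$ supported in $W$, the function $\varphi u$ is then Schwartz-Bruhat, and by Theorem~\ref{thm:Paley-Wiener} so is $\cF(\varphi u)$; in particular $\cF(\varphi u)$ has bounded support, say $\cF(\varphi u)(\eta)=0$ for $\ord\eta<N$. For any $\xi_0\neq 0$, shrink a ball neighborhood $V_0$ of $\xi_0$ so that $\ord\xi=\ord\xi_0$ for all $\xi\in V_0$. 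Then $\cF(\varphi u)(\lambda\xi)=0$ for every $\xi\in V_0$ and every $\lambda\in\Lambda$ with $\ord\lambda<N-\ord\xi_0$, showing that $u$ is $\Lambda$-smooth at $(x_0,\xi_0)$.

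For the reverse inclusion $\Sing(u)\subset\pi(\WF_\Lambda(u))$, I would also argue by contraposition. Assume $x_0\in U$ is such that $u$ is $\Lambda$-smooth at $(x_0,\xi)$ for every $\xi\in K^n\setminus\{0\}$, and to each such $\xi$ associate neighborhoods $U_\xi$ and $V_\xi$ provided by Definition~\ref{Hei:smooth}, shrinking $V_\xi$ to a ball on which $\ord(\cdot)$ is constant and equal to $\ord\xi$. The projections of the $V_\xi$ to the quotient $S_\Lambda^{(n)}=(K^n\setminus\{0\})/\Lambda$ form an open cover of $S_\Lambda^{(n)}$, which is compact. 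Extract a finite subcover coming from $\xi_1,\ldots,\xi_m$, and set $U_0=\bigcap_{i=1}^m U_{\xi_i}$, an open neighborhood of $x_0$. Choose a small ball $B\subset U_0$ containing $x_0$ and put $\varphi=\11_B\in\cS(U)$. For each $i$ there exists $N_i\in\ZZ$ with $\cF(\varphi u)(\lambda\xi)=0$ whenever $\xi\in V_{\xi_i}$ and $\ord\lambda<N_i$. Since any $\eta\in K^n\setminus\{0\}$ can be written $\eta=\lambda\xi$ with $\xi\in V_{\xi_i}$ for some $i$ and $\lambda\in\Lambda$ (so $\ord\lambda=\ord\eta-\ord\xi_i$), the function $\cF(\varphi u)$ vanishes as soon as $\ord\eta<N:=\min_i(N_i+\ord\xi_i)$, i.e.\ it has bounded support.

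Now $\varphi u$ has compact support, so by Theorem~\ref{thm:Paley-Wiener} its Fourier transform $\cF(\varphi u)$ is represented by a $C^\infty$ function; the bounded support just established then shows $\cF(\varphi u)\in\cS(K^n)$. Applying the second assertion of Theorem~\ref{thm:Paley-Wiener} to $\cF(\varphi u)$ (whose Fourier transform is, up to a sign, the compactly supported $\varphi u$) yields that $\varphi u$ is itself Schwartz-Bruhat, so $u$ is locally constant on $B$. Hence $x_0\notin\Sing(u)$, finishing the argument. The main obstacle is the passage from the pointwise $\Lambda$-smoothness assumption (one neighborhood per direction $\xi$) to a single $U_0$ and a uniform bound $N$ that works in \emph{every} direction; this is where compactness of the quotient $S_\Lambda^{(n)}$ is essential, and it is precisely the reason Heifetz introduces $\Lambda$ of finite index in $K^\times$ as the non-archimedean replacement for $\RR_{>0}$.
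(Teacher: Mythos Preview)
Your proof is correct and follows precisely the route the paper indicates: the paper itself gives no proof, only the remark ``By compactness and the definitions, the wave front set lies above the singular support\ldots We leave the details of the proof to the reader.'' Your argument is the natural unpacking of that hint, using compactness of $S_\Lambda^{(n)}$ for the nontrivial inclusion and Paley--Wiener to pass back from bounded Fourier support to local constancy.

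One cosmetic remark: in the first inclusion, the fact that $\cF(\varphi u)$ is Schwartz--Bruhat when $\varphi u\in\cS(K^n)$ is really just the statement that $\cF\colon\cS(K^n)\to\cS(K^n)$ is an isomorphism (Definition~\ref{def:four}); invoking Theorem~\ref{thm:Paley-Wiener} there is a slight detour, though not incorrect. The use of Paley--Wiener is genuinely needed in the reverse inclusion, where you apply it (twice, effectively) to recover that $\varphi u$ is represented by a locally constant function.
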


\begin{remark}
	The local nature of wave front sets %$\WF_\Lambda(u)$
is clear from the definition (as in \cite[Proposition 2.1]{Hei85a}): Let $V\subset U\subset K^n$ be open and let $u$ be in $\cS'(U)$. Then one has
	\begin{equation}\label{localWF}
		\WF_\Lambda(u_{|V}) = \WF_\Lambda(u) \cap (V\times K^n).
	\end{equation}
\end{remark}

\begin{remark}\label{rem:WFcharts} Let $f:U\to U_1$ be a strict $C^1$ isomophism between two open sets of $K^n$ and $u$ be a distribution in $\cS'(U)$.
	A point
$$(x_0,\xi_0)\mbox{ in }T^*U\setminus\{0\}
$$
is $\Lambda$-microlocally smooth for $u$ if and only if the point
$$
\left(f(x_0),(\:^{t}Df(x_0))^{-1}(\xi_0)\right)
$$
is $\Lambda$-microlocally smooth for $f_*u$. Here, $\:^{t}Df(x_0)$ stands for the transpose of $Df(x_0)$, or, in other words, for the dual linear map of $Df(x_0)$ by the identifications made between $K^n$ and $(K^n)^*$.
\end{remark}
\begin{proof}[Proof of Remark \ref{rem:WFcharts}]
The remark follows from Proposition \ref{prop:stat} using the function $p$ defined by
$$
p(x,\eta)=(f(x) \mid \eta),
$$
similar as in the proof of \cite[Proposition 2.3]{Hei85a} and \cite[Proposition 1.3.2]{Duist}. Let us repeat the argument for the convenience of the reader. If $u$ is $\Lambda$-microlocally smooth at $(x_0,\xi_0)$ and if we put $\eta_0= (\:^{t}Df(x_0))^{-1}(\xi_0)$, then, by Proposition \ref{prop:stat} and by local Taylor expansions (of degree $1$) of strict $C^1$ functions from \cite[Theorem 5.1 and Proposition 5.3]{BGlockN}, there are neighborhoods $U_{x_0}$ of $x_0$ and
$\check{U}_{\eta_0}$ of $\eta_0$ such that for any $\varphi$ in $\cS(U_{x_0})$, there exists an $N_\varphi>0$, such that for any $\lambda$ in $\Lambda$ with, $\ord \lambda \leq N_\varphi$ we have for any $\eta$ in $\check{U}_{\eta_0}$
$$
\langle u,\varphi \psi_K(\lambda(f(.)\mid \eta)\rangle  = 0.$$
	Using the definition of the push-forward by $f$, this means
	$$\langle f_*u, (\varphi\circ f^{-1})\psi_K(\lambda(.\mid \eta)\rangle  = 0.$$
	We deduce that $(f(x_0),\eta_0)$ is a $\Lambda$-microlocally smooth point of $f_*u$. The other implication follows similarly using $f^{-1}$ instead of $f$.
\end{proof}

For $X$ a strict $C^1$ submanifold of $K^n$, recall that $T^*X$ stands for the co-tangent bundle of $X$, and  $T^*X\setminus\{0\}$ for the set of $(x,\xi)$ in $T^*X$ with $x\in X$ and $\xi\not=0$.
Using the previous two remarks, and again the Taylor approximation formula (of degree $1$) for strict $C^1$ functions from \cite[Theorem 5.1 and Proposition 5.3]{BGlockN}
we can give now the `coordinate free' definition of wave front sets for distributions on strict $C^1$ submanifolds.

\begin{def-lem}[Wave front sets]\label{Hei:smoothvar}
	Let $X\subset K^n$ be a strict $C^1$ submanifold of dimension $\ell$ and let $u$ be a distribution on $X$. Let $(x_0,\xi_0)$ be in $T^*X\setminus\{0\}$. Say that $u$ is \emph{$\Lambda$-micro-locally smooth} or \emph{$\Lambda$-smooth} at $(x_0,\xi_0)$ if there are an open $U\subset X$ containing $x_0$ and a strict $C^1$ isomorphism $f:U\to U_1\subset K^\ell$, such that the distribution $f_*(u_{|U})$ on $U_1$
is  $\Lambda$-smooth at $(f(x_0),(^tDf(x_0))^{-1}\xi_0)$.
Moreover, one has that $u$ is $\Lambda$-smooth at $(x_0,\xi_0)$ if and only if for all open set $U\subset X$ containing $x_0$ and all strict $C^1$ isomorphism $f:U\to U_1\subset K^\ell$, the distribution $f_*(u_{|U})$ on $U_1$
is  $\Lambda$-smooth at $(f(x_0),(^tDf(x_0))^{-1}\xi_0)$.
The complement in $T^*X\setminus\{0\}$ of the set of $(x_0,\xi_0)$ at which $u$ is $\Lambda$-smooth, is called the
\emph{$\Lambda$-wave front set} of $u$, and is denoted by $\WF_\Lambda(u)$.
\end{def-lem}

\begin{defn}
	A subset $\Gamma$ of $T^*X\setminus\{0\}$ is called a \emph{$\Lambda$-cone} in $T^*X\setminus\{0\}$ if and only if, for each $\lambda\in \Lambda$ and each $(x,\xi)$ in $\Gamma$, the point $(x,\lambda\xi_1,\ldots,\lambda\xi_n)$, denoted by $(x,\xi)$, lies in $\Gamma$.
\end{defn}

\begin{remark}
	By definition, the wave front set of a distribution is an example of a $\Lambda$-cone.
\end{remark}

Wave front sets can be rather general sets, as shown by the following result, analogous to \cite[Thm 8.1.4]{Hormander83}.

\begin{thm}\label{WFu=S}
	Let $\Lambda$ be a finite index subgroup of $(K^{*},\times)$.
	Let $d \geq 1$ be an integer. Let $S$ be a closed subset of $K^d \times (K^{d} \setminus \{0\})$ which is $\Lambda$-conical in the second factor meaning that $S$, when seen as subset of $T^*K^{d} \setminus \{0\})$, is a $\Lambda$-cone. Then there is a distribution $u$ in $\cS'(K^d)$ such that $\WF_{\Lambda}(u)=S$.
\end{thm}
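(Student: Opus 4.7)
The plan is to imitate Hörmander's construction \cite[Thm 8.1.4]{Hormander83} in the non-archimedean setting: the distribution $u$ will be built as an explicit convergent series of "wave packets" whose micro-local singularities are engineered to be dense in $S$, after which both inclusions $\WF_\Lambda(u)\subseteq S$ and $\WF_\Lambda(u)\supseteq S$ are verified term-by-term using the ball-Fourier-transform identity $\mathcal F(\11_{B_r(a)})(\eta)=q_K^{-dr}\psi_K(-a\cdot\eta)\11_{B_{1-r}(0)}(\eta)$.

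First I would choose a countable dense sequence $\{(x_k,\xi_k)\}_{k\ge 1}$ in $S\cap (K^d\times C)$, where $C\subset K^d\setminus\{0\}$ is a compact fundamental set for the $\Lambda$-action on $K^d\setminus\{0\}$ (as recalled before Definition \ref{Hei:smooth}, the quotient $S_\Lambda^{(d)}$ is compact); by $\Lambda$-conicity of $S$ in the second factor, this is equivalent to picking a dense subset of the image of $S$ in $K^d\times S_\Lambda^{(d)}$. Then I would choose integers $r_k\to+\infty$ and scalars $\lambda_k\in\Lambda$ with $\ord\lambda_k\to-\infty$, both tending to infinity very rapidly (to be determined inductively), together with nonzero coefficients $c_k\in\CC$ with $|c_k|\to 0$ fast, and define
\begin{equation*}
u\;:=\;\sum_{k\ge 1} c_k\,\11_{B_{r_k}(x_k)}(x)\,\psi_K\bigl(\lambda_k\,\xi_k\cdot x\bigr).
\end{equation*}
Convergence in $\cS'(K^d)$ is automatic, since for any test function $\varphi\in\cS(K^d)$ the $k$-th term contributes at most $|c_k|\,\|\varphi\|_\infty\,q_K^{-dr_k}$ to $\langle u,\varphi\rangle$.

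For $\WF_\Lambda(u)\subseteq S$: let $(x_0,\xi_0)\notin S$. Since $S$ is closed and $\Lambda$-conical while the $\xi_k$ remain in the compact set $C$, we can find neighborhoods $U$ of $x_0$ and $V$ of $\xi_0$ together with a finite set $F\subset\NN$ such that for every $k\notin F$ with $x_k\in U$ one has $\lambda\xi\neq\lambda_k\xi_k+B_{1-r_k}(0)$ whenever $\xi\in V$ and $\ord\lambda$ is sufficiently small (this uses separation between the multi-scales $\{\lambda_k\xi_k\}$ arranged by taking $\ord\lambda_k$ decaying fast enough). For any $\varphi\in\cS(K^d)$ with support in $U$, each $k$-th Fourier transform $\mathcal F(c_k\11_{B_{r_k}(x_k)}\psi_K(\lambda_k\xi_k\cdot x)\varphi)(\eta)$ is supported in a translate of $B_{1-r_k}(0)$ around $\lambda_k\xi_k$; hence at $\eta=\lambda\xi$ with $\xi\in V$ and $\ord\lambda$ below some threshold, only the finitely many terms in $F$ contribute, and these are Schwartz-Bruhat, so $\mathcal F(\varphi u)(\lambda\xi)$ vanishes identically for $\ord\lambda$ small enough. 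Thus $u$ is $\Lambda$-smooth at $(x_0,\xi_0)$.

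For $\WF_\Lambda(u)\supseteq S$: let $(x_0,\xi_0)\in S$; by $\Lambda$-invariance assume $\xi_0\in C$, and extract a subsequence $(x_{k_j},\xi_{k_j})\to(x_0,\xi_0)$. For any $\varphi=\11_{B_R(x_0)}$ with $R$ fixed, and for $j$ large enough so that $r_{k_j}>R$ and $x_{k_j}\in B_R(x_0)$, the $k_j$-th term contributes the nonzero main value $c_{k_j}q_K^{-dr_{k_j}}$ to $\mathcal F(\varphi u)(\lambda_{k_j}\xi_{k_j})$ (since $\varphi\equiv 1$ on $B_{r_{k_j}}(x_{k_j})$ and the oscillatory phase cancels exactly). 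The contributions from all other indices are each bounded by $|c_k|q_K^{-dr_k}$; the main obstacle is to arrange that their total mass is strictly less than $|c_{k_j}|q_K^{-dr_{k_j}}$ for every $j$. I handle this by a lacunary/greedy construction: fix $c_k=q_K^{-k}$ first, then choose $r_k$ inductively so large that $\sum_{\ell>k}|c_\ell|q_K^{-dr_\ell}<\tfrac12|c_k|q_K^{-dr_k}$, and finally choose $\lambda_k\in\Lambda$ with $\ord\lambda_k$ decreasing fast enough to separate the frequency centers $\lambda_k\xi_k$ on the scale $q_K^{r_k-1}$ (so the finitely many "pre-$k$" terms cannot contribute at the point $\lambda_{k_j}\xi_{k_j}$). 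Then $\mathcal F(\varphi u)(\lambda_{k_j}\xi_{k_j})\neq 0$ with $\xi_{k_j}\to\xi_0$ and $\ord\lambda_{k_j}\to-\infty$, giving $(x_0,\xi_0)\in\WF_\Lambda(u)$.
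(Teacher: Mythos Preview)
Your proof is correct and follows the same strategy as the paper, both adapting H\"ormander's construction. The paper uses explicit parameter choices rather than your greedy/inductive scheme: fixing $\lambda\in\Lambda$ of minimal positive valuation, it takes $c_k=|\lambda|^k$, ball radius $k\cdot\ord\lambda$ (written as $\11_{\cO_K^d}(\lambda^{-k}(x-x_k))$), and frequency $\lambda^{-3k}\theta_k$ with $|\theta_k|=1$. With this cubic scaling, \emph{all} cross terms at the test frequency $\lambda^{-3k}\theta_k$ vanish exactly, since $|\lambda^{j-3k}\theta_k-\lambda^{-2j}\theta_j|\ge|\lambda|^{-k}$ whenever $j\neq k$; thus no tail estimate is needed for the inclusion $S\subseteq\WF_\Lambda(u)$. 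Your mixed argument (exact frequency separation for the pre-$k_j$ terms, $\ell^1$-tail bound for the post-$k_j$ terms) is also valid. Two minor points of exposition: the inductive condition on $r_k$ should be stated forward (e.g.\ choose $r_{k+1}$ so that $|c_{k+1}|q_K^{-dr_{k+1}}\le\tfrac14|c_k|q_K^{-dr_k}$) rather than as a condition on the not-yet-defined $r_\ell$ for $\ell>k$; and the inclusion $\WF_\Lambda(u)\subseteq S$ implicitly needs $|\lambda_k|/q_K^{r_k}\to\infty$, so that for any cone-separation constant $c>0$ (which depends on the point $(x_0,\xi_0)$) one has $c|\lambda_k|>q_K^{r_k-1}$ for all but finitely many $k$.
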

\begin{proof} We adapt the proof of \cite[Thm 8.1.4]{Hormander83} to the present context.
Let $\lambda$ be an element of $\Lambda$ with minimal positive valuation.
	      Let $(x_k,\theta_k)_k$ be a sequence in $S$ indexed by integers $k>0$, with $\abs{\theta_k}_K = 1$ for each $k$ and such that any point $(x,\theta)$ of $S$ with $|\theta|=1$ is a limit of a subsequence of $(x_k,\theta_k)_k$.
	      We consider the function $u$ on $K^d$ defined for $x$ in $K^d$ by
	      $$
u(x) =
	      \sum_{k >0} \abs{\lambda^{k}} \11_{\cO_K^d}(\lambda^{-k}(x-x_k))
	      \psi_K(x\mid -\lambda^{-3k}\theta_k),
$$
where $\11_{\cO_K^d}$ is the characteristic function of $\cO_K^d$.
Clearly $u$ is locally integrable on $K^d$. Moreover, $u$ is locally constant outside the projection $\pi_x(S)$, where $\pi_x$ is the projection to $K^d$.
Indeed, for $x_0\notin \pi_x(S)$, as $S$ is closed,  $|\theta_k|=1$ for each $k>0$ and by compactness, there is a neighborhood $U_{x_0}$ such that at most finitely many $x_k$ belong to $U_{x_0}$, and thus, $u_{\mid U_{x_0}}$ is a finite sum of locally constant functions. In particular, the singular support of $u$ is contained in $\pi_x(S)$.

\par
	We now prove the inclusion $\WF_{\Lambda}(u) \subset S$. %We follow ideas of Hormander.
	Fix $(x_0,\xi_0) \notin S$ with $x_0\in \pi_x(S)$. Then there is a neighborhood $U$ of $x_0$ and a $\Lambda$-conical neighborhood $V$ of
	$\xi_0$ such that
	\begin{equation} \label{UVS}
		(U \times V)\cap S = \emptyset.
	\end{equation}
 We may suppose that $U$ is closed and that $\overline V = V\cup \{0\}$.
	We write $u$ as $u=u_1+u_2$ where $u_1$ is the sum of terms for $k>0$ with $x_k\notin U$ and $u_2$ is the sum of terms for $k>0$ with
	$x_k \in U$.
\textcolor{blue}{Then, it follows from its definition that \blue{the restriction to $U$ of } $u_1$ is a finite sum of locally constant functions on $U$.}
%Then the function $u_1$ is \textcolor{blue}{a finite sum then it is} locally constant on $U$.
For the Fourier transform of $u_2$ we find
		\begin{equation} \label{Fourieru} 	
		\mathcal F(u_2)(\xi) = \sum_{k>0,\ x_k \in U} \abs{\lambda^{(d+2)k} }
		 \11_{B_1} \left( \lambda^k \xi-\lambda^{-2k}\theta_k  \right) \psi_K(x_k \mid (\xi-\lambda^{-3k}\theta_k)),
	\end{equation}
by integrating term by term and by noting that $\mathcal F(\11_{\cO_K^d})= \11_{B_1}$ with $B_1\subset K^d$ the ball around $0$  of valuative radius $1$.
 	As $(x_k,\theta_k)$ lies in $S$ for each $k>0$ we have that $\theta_k \notin V$ for each $k>0$.
	Hence, there is a constant $c>0$ such that for any $\xi \in V$ and $\theta \notin V$ we have
	$$\abs{\xi - \theta}\geq c  \abs{ (\xi,\theta) },$$
the norm of a tuple being the maximum of the norms of the entries.
	(The existence of $c$ is clear for $(\xi,\theta)$ with $\abs{(\xi,\theta)} = 1$ and thus for general $(\xi,\theta)$ by scaling.)
	Thus, for any $\xi$ in $V$ and any $k>0$ one has
		\begin{equation} \label{xithetak} 	
    \abs{ \lambda^k \xi-\lambda^{-2k}\theta_k  } \geq c   \max(\abs{\lambda^k \xi}, \abs{-\lambda^{-2k}}) \geq  c\abs{\xi}^{2/3} .
	\end{equation}
%	But $\mathcal F \Phi$ belongs to $S(K^d)$ and has a bounded support, so
By the presence of $\11_{B_1}$ in (\ref{Fourieru}) and by (\ref{xithetak}), the restriction of $\mathcal F u_2$ to $V$ has bounded support, and hence, $(x_0,\xi_0)\notin \WF_\Lambda(u)$. We have showed that $\WF_{\Lambda}(u) \subset S$.

\par
 Let us now prove the inclusion $S\subset \WF_{\Lambda}(u)$. Let $(x_0,\xi_0)$ be in $S$. Let us write $\chi$ for $\11_{B_0(x_0)}$.
For any $k>0$ such that $x_k\not \in B_0(x_0)$ and for any $x$ in the ball $B_0(x_0)$, one has
$$
\11_{\cO_K^d}(\lambda^{-k}(x-x_k))=0.
$$
Hence, the Fourier transform $\mathcal F(\chi u)$ has the same form as
$\mathcal F(u)$ in (\ref{Fourieru}) but with the condition $x_k \in U$ on $k>0$ replaced by the condition $x_k\in B_0(x_0)$. We obtain for any $k>0$ the equality
	\begin{multline*} \mathcal F(\chi u)(\lambda^{-3k}\theta_k) = \\
			 %\left|\lambda^{(d+2)k}\right| +
\sum_{j>0,\ x_j\in B_0(x_0)} \abs{\lambda^{(d+2)j} }
			 \11_{\cO_K^d}\left( \lambda^{-3k+j}\theta_k - \lambda^{-2j}\theta_j \right) \psi_K\left(x_j \mid \lambda^{-3k}\theta_k - \lambda^{-3j}\theta_j \right).
		\end{multline*}
Note that for any $j>0$ with $j\neq k$ one has
	$$
\abs{  \lambda^{-3k+j}\theta_k - \lambda^{-2j}\theta_j }  \geq \abs{\lambda}^{-k},
$$
which implies for all $k>0$ with
$x_k \in B_0(x_0)$ that
$$
\mathcal F(\chi u)(\lambda^{-3k}\theta_k) = \abs{\lambda^{(d+2)k} } \neq 0.
$$
In particular, $(x_0,\xi_0)\notin \WF_\Lambda(u)$, which finishes the proof.
\end{proof}

For any distribution $u$ on a strict $C^1$-manifold $X$, we write $\WF_\Lambda^{0} (u)$ for the union of $\WF_\Lambda (u)$ with $X\times \{0\}$ in $T^*X$. With this notation, we have the following relation for tensor products.

\begin{thm}[Wave front sets and tensor products] \label{thm:WFtenseur}
Let $X_1$ and $X_2$ be two open sets of $K^{n_1}$ and $K^{n_2}$ respectively. Let $u_1$ be in $\cS'(X_1)$ and $u_2$ be in $\cS'(X_2)$.
%Let $\Lambda$ be a finite index subgroup of $K^\times$.\\
Then we have the inclusion
\begin{equation} \label{equation:inclusion-front-onde-tenseur}
\begin{array}{ccl}
\WF_\Lambda(u_1\otimes u_2) & \subset &   \WF_\Lambda^{0} (u_1) \times \WF_\Lambda^{0} (u_2).
\end{array}
\end{equation}
\end{thm}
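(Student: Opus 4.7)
The plan is to prove the contrapositive. Let $(x_0,\xi_0) = (x_{0,1},x_{0,2},\xi_{0,1},\xi_{0,2}) \in T^*(X_1\times X_2)\setminus\{0\}$ and suppose it does not belong to $\WF_\Lambda^0(u_1)\times\WF_\Lambda^0(u_2)$. Then, by the definition of $\WF_\Lambda^0$, at least one of the coordinates, say $(x_{0,1},\xi_{0,1})$, satisfies $\xi_{0,1}\neq 0$ and $u_1$ is $\Lambda$-smooth at $(x_{0,1},\xi_{0,1})$; the other case is symmetric. I would then show that $u_1\otimes u_2$ is $\Lambda$-smooth at $(x_0,\xi_0)$.

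By Definition \ref{Hei:smooth} applied to $u_1$, pick open neighborhoods $U_1\ni x_{0,1}$ and $V_1\ni\xi_{0,1}$ witnessing $\Lambda$-smoothness of $u_1$. Choose any compact open $U_2\ni x_{0,2}$ in $X_2$ and any bounded open $V_2\ni\xi_{0,2}$ in $K^{n_2}$, and set $U = U_1\times U_2$ and $V = V_1\times V_2$. The central identity to establish, via the Paley-Wiener representation (Theorem \ref{thm:Paley-Wiener}) together with the characterizing property \eqref{equation:propriete-tensorielle} of tensor products of distributions, is the multiplicativity
\[
\cF\bigl((\varphi_1 u_1)\otimes(\varphi_2 u_2)\bigr)(\xi_1,\xi_2)
= \cF(\varphi_1 u_1)(\xi_1)\,\cF(\varphi_2 u_2)(\xi_2)
\]
for $\varphi_i\in\cS(X_i)$; this follows by choosing ball characteristic functions $\phi_i$ containing the supports of $\varphi_i u_i$ and using $\psi_K((x_1,x_2)\mid(\xi_1,\xi_2)) = \psi_K(x_1\mid\xi_1)\psi_K(x_2\mid\xi_2)$.

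Next, any $\varphi\in\cS(X_1\times X_2)$ supported in $U$ can be written as a finite linear combination $\varphi = \sum_j c_j \varphi_{1,j}\otimes\varphi_{2,j}$ with $\varphi_{1,j}\in\cS(X_1)$ supported in $U_1$ and $\varphi_{2,j}\in\cS(X_2)$ supported in $U_2$; this uses the elementary fact that every ball in $K^{n_1+n_2}$ is a product of balls of equal valuative radius in $K^{n_1}$ and $K^{n_2}$, so one may refine any ball decomposition of $\varphi$ so that the individual balls lie inside $U_1\times U_2$. Combined with the multiplicativity above,
\[
\cF\bigl(\varphi(u_1\otimes u_2)\bigr)(\xi_1,\xi_2)
= \sum_j c_j\,\cF(\varphi_{1,j}u_1)(\xi_1)\,\cF(\varphi_{2,j}u_2)(\xi_2).
\]

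To conclude, apply the $\Lambda$-smoothness of $u_1$ to each $\varphi_{1,j}$: it yields an integer $N_j$ so that $\cF(\varphi_{1,j}u_1)(\lambda\eta_1)=0$ for all $\lambda\in\Lambda$ with $\ord\lambda < N_j$ and all $\eta_1\in V_1$. Since the decomposition is finite, $N := \min_j N_j$ is well defined. For $\lambda\in\Lambda$ with $\ord\lambda < N$ and $(\eta_1,\eta_2)\in V$, every term in the sum vanishes (the factor from $u_1$ kills it, while the factor from $u_2$ is merely a bounded, well-defined complex number by Paley-Wiener), so $\cF(\varphi(u_1\otimes u_2))(\lambda(\eta_1,\eta_2))=0$. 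This is exactly $\Lambda$-smoothness of $u_1\otimes u_2$ at $(x_0,\xi_0)$, proving the inclusion. The only substantive step is the multiplicativity identity for the Fourier transform of a tensor product of compactly supported distributions; everything else is a finite-linearity bookkeeping exercise and so presents no real obstacle.
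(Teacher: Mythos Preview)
Your proof is correct and follows essentially the same approach as the paper's: take a point outside the right-hand side, reduce by symmetry to the case $(x_{0,1},\xi_{0,1})\notin\WF_\Lambda^0(u_1)$, decompose a test function supported in a product neighborhood as a finite sum of tensor products of ball characteristic functions, and use $\Lambda$-smoothness of $u_1$ on each first factor to annihilate every term. The only cosmetic difference is that the paper invokes \eqref{equation:description-produit-tensoriel} directly to evaluate $\langle u_1\otimes u_2,\varphi\,\psi_K(\cdot\mid\lambda(\xi_1,\xi_2))\rangle$, whereas you package the same computation as a multiplicativity identity for the Fourier transform via Paley--Wiener; and the paper takes the full $X_2$ and $K^{n_2}$ as the second-factor neighborhoods rather than your compact $U_2$ and bounded $V_2$, which is immaterial.
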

\begin{proof}
Denote by $\mathcal E$ the right-hand side term of the inclusion (\ref{equation:inclusion-front-onde-tenseur}).
	Let $\mathcal P$ be a point $((a_1,\eta_1),(a_2,\eta_2))$  in $T^*X_1 \times T^*X_2$, with $(\eta_1,\eta_2)\not=0$.
	Assume $\mathcal P$ does not belong to $\mathcal E$.
	In particular $(a_1,\eta_1)$ does not belong to $\WF_\Lambda(u_1)$
	or $(a_2,\eta_2)$ does not belong to $\WF_\Lambda(u_2)$.
	Assume for instance $(a_1,\eta_1) \notin \WF_\Lambda(u_1)$. Then, by the definition of $\Lambda$-micro-local smoothness, there are an open ball $U_{a_1}$ centered at $a_1$ and an open ball $\check U_{\eta_1}$ centered at $\eta_1$ such that for any Schwartz-Bruhat function $\varphi$ in $\cS(U_{a_1})$ there is an integer $N_{\varphi}>0$ such that for all $\lambda$ in $\Lambda$ with $\ord \lambda < N_\varphi$ and $\xi_1$ in $\check U_{\eta_1}$ we have
	$$\mathcal F(\varphi u_1)(\lambda \xi_1)=0.$$
Consider the open sets $\Omega = U_{a_1}\times X_2$ and $\check \Omega = \check U_{\eta_1} \times K^{n_2}$ and choose $\varphi \in \cS(\Omega)$. This Schwartz-Bruhat function is a $\CC$-linear combination of characteristic functions of balls of $\Omega$ which themselves are products of characteristic functions of balls in $U_{a_1}$ and in $X_2$, and thus, $\varphi$ can be written such that for $x_1\in X_1$ and $x_2\in X_2$
$$
\varphi(x_1,x_2) = \sum_{i=1}^{\ell} c_{i}\11_{B_{i,1}}(x_1)\11_{B_{i,2}}(x_2),
$$
	with $B_{i,1}$ a ball in $U_{a_1}$, $B_{i,2}$ a ball in $X_2$ and $c_{i}$ a complex number for each $i$.
	Denote by $N_\varphi$ the minimum $\min N_{\11_{B_{i,1}}}$ over $i$.
	Using (\ref{equation:description-produit-tensoriel}), we conclude that for any $\lambda$ in $\Lambda$ with $\ord \lambda \leq N_\varphi$ and $(\xi_1,\xi_2)$ in $\check \Omega$ we have
	\begin{equation} \label{equation:WFtenseur}
		%\begin{array}{l}
			\left<u_1\otimes u_2,(x_1,x_2) \mapsto \varphi(x_1,x_2)\psi_K((x_1,x_2) \mid \lambda (\xi_1,\xi_2))\right> =0,
			        \end{equation}
	which shows that the point $((a_1,\eta_1),(a_2,\eta_2))$ does not belong to $\WF_{u_1 \otimes u_2}$ as desired.
	\end{proof}
	
	\subsection{Pull-backs and push-forwards of distributions}\label{sec:sequences}
%By Proposition 2.3 of \cite{Hei85a}, one can define wave fronts for distributions on $K$-analytic submanifolds of $K^n$, instead of on opens $U\subset K^n$. Also in the positive characteristic case? Explain ! If not, then only do this in the definable way, and only for large characteristic, by transfer, or don't use varieties other than open sets in $K^n$.

Let $X$ be a strict $C^1$ submanifold of $K^n$. On the set $\cS'(X)$ of all distributions on $X$, one has the topology such that for distributions $u$ and $u_j$ on $X$, one has $u_j\to u$ if and only if for all Schwartz-Bruhat functions $\varphi$ on $X$ one has $u_j(\varphi)\to u(\varphi)$ in $\CC$. We call this convergence of $(u_j)_j$ in the $\cS'$-sense. We introduce a refined topology on certain subsets  $\cS'_{\Gamma,\Lambda}(X)$  of $\cS'(X)$ by using closed $\Lambda$-cones $\Gamma$ in $T^* X$.

%Fix a $\Lambda$-cone $\Gamma$ in $T^*X\setminus\{0\}$ which is moreover closed in $T^*X\setminus\{0\}$. Define $\cS'_{\Gamma,\Lambda}(X)$ as the collection of distributions $u$ on $X$ such that $\WF_\Lambda(u)$ is disjoint from $\Gamma$. For $u_j$ and $u$ in $\cS'_{\Gamma,\Lambda}(X)$, $j\in \NN$, we say that $u_j\to u$ in $\cS'_{\Gamma,\Lambda}(X)$ if and only if $u_j\to u$ in $\cS'(X)$ and moreover, for each point $(x_0,\xi_0)$ in $\Gamma$ there are open neighborhoods $U_0$ of $x_0$ and $V_0$ of $\xi_0$ such that for any Schwartz-Bruhat function $\varphi$ with support contained in $U_0$ there is an integer $N$ (which may depend on $U_0$, $V_0$ and $\varphi$) such that for all $\lambda\in\Lambda$ with $\ord \lambda < N$ one has for all $\xi$ in $V_0$ and all $j$ that
%\begin{equation}\label{Hei:pj}
%\cF(\varphi u_j) (\lambda\xi) = 0.
%\end{equation}

\begin{defn} \label{def:S'_Gamma-convergence}
	Let $\Gamma$ be a closed $\Lambda$-cone in
	$T^*X \setminus \{0\}$. We denote by $\cS'_{\Gamma,\Lambda}(X)$ the set of distributions $u$ in $\cS'(X)$ such that
	the wave front set $\WF_{\Lambda}(u)$ is included in $\Gamma$. A sequence $(u_j)_j$ with $u_j$ in $\cS'_{\Gamma,\Lambda}(X)$ is said to converge in the
	$\cS'_{\Gamma,\Lambda}$-sense to a distribution $u$ in $\cS'(X)$ if and only if
	\begin{enumerate}
		\item $(u_j)_j$ converges to $u$ in the $\cS'$-sense,
		\item \label{def:S'_Gamma-convergence-Fourier} for each $(x_0,\eta_0)$  in $T^*X\setminus \{0\}$ with $(x_0,\eta_0)\not\in \Gamma$, there is an open neighborhood $U_{x_0}$ of $x_0$ and an open neighborhood $U_{\eta_0}$ of $\eta_0$ such that for any $\varphi \in \cS(U_{x_0})$ there is an integer $N\in \mathbb Z$ such that
$$
\mathcal F(\varphi u_j)(\lambda \eta)=0
$$
for all $j$, all $\eta \in U_{\eta_0}$ and for all $\lambda \in \Lambda$ with $\ord \lambda <N$.
	\end{enumerate}
\end{defn}

\begin{prop}\label{SB-approx-S':bis}
Let $X$ be a strict $C^1$ submanifold of $K^n$ and let $u$ be a distribution in $\cS'(X)$. Then there exists a sequence of Schwartz-Bruhat functions $(u_j)_j$ %(with $u_j$ in $\cS(X)$)
which converges to $u$ in the $\cS'_{\Gamma,\Lambda}$-sense if and only if $u$ lies in $\cS'_{\Gamma,\Lambda}(X)$.
\end{prop}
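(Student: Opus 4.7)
The plan is to prove both implications, with the nontrivial direction using the approximating sequence from Proposition \ref{SB-approx-S'} after choosing the mollifier $\Phi_j$ so as to control its Fourier transform.

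For the easy direction, I would start from a sequence $(u_j)_j$ of Schwartz-Bruhat functions converging to $u$ in the $\cS'_{\Gamma,\Lambda}$-sense, fix $(x_0,\eta_0)\notin\Gamma$, and invoke condition (2) of Definition \ref{def:S'_Gamma-convergence}: there exist neighborhoods $U_{x_0}$ and $U_{\eta_0}$ such that each $\varphi\in\cS(U_{x_0})$ admits an integer $N$ with $\cF(\varphi u_j)(\lambda\eta)=0$ for all $j$, all $\eta\in U_{\eta_0}$, and all $\lambda\in\Lambda$ with $\ord\lambda<N$. Passing to the limit in $j$ via Proposition \ref{prop:uniform-convergence} transfers this vanishing to $\cF(\varphi u)$, showing that $u$ is $\Lambda$-smooth at $(x_0,\eta_0)$, and hence $u\in\cS'_{\Gamma,\Lambda}(X)$.

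For the converse, I would first reduce to the case where $X$ is open in $K^n$, by means of a strict $C^1$ chart and a clopen partition of unity (using Definition-Lemma \ref{Hei:smoothvar} to transfer wave front sets). Then I would take the sequence $u_j=(\chi_j u)*\Phi_j$ from the proof of Proposition \ref{SB-approx-S'}, choosing $\Phi_j:=q_K^{r_j n}\11_{B_{r_j}}$ with $r_j\to\infty$, so that $\cF(\Phi_j)=\11_{B_{1-r_j}}$. Condition (1) of $\cS'_{\Gamma,\Lambda}$-convergence is then immediate from Proposition \ref{SB-approx-S'}. For condition (2), fix $(x_0,\eta_0)\notin\Gamma$ and choose neighborhoods $U_{x_0}$ and $V_{\eta_0}$ witnessing $\Lambda$-smoothness of $u$ there. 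The core of the argument is the identity
$$
\cF(\varphi u_j)(\xi)=\11_{B_{1-r_j}}(\xi)\cdot\cF(\varphi u)(\xi),
$$
valid for all $\xi\in K^n$ as soon as $j$ is large enough that $\chi_j\equiv 1$ on $\Supp\varphi+\Supp\Phi_j$ and $r_j\geq\alpha^{+}(\varphi)$. I would derive this identity by writing $\varphi u_j=\varphi\cdot(u*\Phi_j)$ on $\Supp\varphi$, unfolding $\cF(\varphi u_j)(\xi)$ as a double integral, swapping the order of integration via Fubini, and using the local constancy of $\varphi$ on balls of radius $r_j$ so that the inner $z$-integral collapses to $\varphi(y)\cF(\Phi_j)(\xi)=\varphi(y)\11_{B_{1-r_j}}(\xi)$.

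With the key identity in hand, for $\xi=\lambda\eta$ with $\eta\in V_{\eta_0}$ and $\ord\lambda<N_\varphi$, either $\xi\in B_{1-r_j}$ (in which case $\cF(\varphi u_j)(\xi)=\cF(\varphi u)(\xi)=0$ by $\Lambda$-smoothness of $u$) or $\xi\notin B_{1-r_j}$ (so $\cF(\varphi u_j)(\xi)=0$ directly). This handles all sufficiently large $j$. For the finitely many remaining $j$'s, each $\cF(\varphi u_j)$ is Schwartz-Bruhat, hence compactly supported, and thus vanishes for $\ord\xi<N_{j,\varphi}$ for some $N_{j,\varphi}\in\ZZ$; taking the minimum of $N_\varphi$ with the finitely many $N_{j,\varphi}$ (shifted by the bound on $\ord\eta$ over $V_{\eta_0}$) yields the uniform integer required by condition (2). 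The main obstacle is establishing the Fourier-truncation identity: it is really an ultrametric phenomenon with no direct real analogue, and once it is available the rest of the argument is bookkeeping.
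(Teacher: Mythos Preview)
Your proof is correct. Both the paper and you use the same approximating sequence $u_j=(\chi_j u)*\Phi_j$ from Proposition~\ref{SB-approx-S'} and verify condition~(2) of Definition~\ref{def:S'_Gamma-convergence} by unfolding the convolution; the easy direction is the same in both.

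The difference lies in the execution of the hard direction. The paper works with an arbitrary mollifier sequence $(\Phi_j)$ and rewrites $\cF(\varphi u_j)(\lambda\eta)$ as $\langle u,\,g_j\cdot\psi_K(\cdot\mid\lambda\eta)\rangle$ where $g_j(x)=\chi_j(x)\int\varphi(x+y)\psi_K(y\mid\lambda\eta)\Phi_j(y)\,dy$; it then observes that $g_j$ is Schwartz--Bruhat with support eventually contained in the ball $B(x_0,r)$ furnished by microlocal smoothness, which forces the shrinking $U_{x_0}=B_{2r}(x_0)$. You instead pin down $\Phi_j=q_K^{r_jn}\11_{B_{r_j}}$, so that $\cF(\Phi_j)=\11_{B_{1-r_j}}$, and obtain the sharper identity $\cF(\varphi u_j)=\11_{B_{1-r_j}}\cdot\cF(\varphi u)$ once $r_j\ge\alpha^+(\varphi)$ and $\chi_j\equiv1$ on $\Supp\varphi+B_{r_j}$. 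This ultrametric ``Fourier truncation'' lets you keep the original neighborhood $U_{x_0}$ and makes the vanishing immediate: the product is zero either because $\cF(\varphi u)(\lambda\eta)=0$ (microlocal smoothness) or because the indicator cuts it off. Your handling of the finitely many small $j$ via compact support of $\cF(\varphi u_j)$ is also fine. The paper's argument is mollifier-agnostic; yours is slightly more concrete and avoids the neighborhood shrinkage.
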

\begin{proof}
Since we can work locally, it is enough to treat the case that $X=K^n$.
As in the proof of proposition \ref{SB-approx-S'} and with its notation for $\chi_j$ and $\Phi_j$, we consider a sequence $(u_j)_j$  of Schwartz-Bruhat functions in $\cS(K^n)$, with $u_j = (\chi_j u)*\Phi_j$, which converges to $u$ in the $\cS'$-sense.
Each function $u_j$ defines a distribution with an empty wave front set, hence $u_j$ lies in $\cS'_{\Gamma,\Lambda}(K^n)$.

\par
Let us first suppose that $u$ lies in $\cS'_{\Gamma,\Lambda}$ and prove the $\cS'_{\Gamma,\Lambda}$-convergence of the sequence to $u$.
Let $(x_0,\eta_0)$ be a point in $T^* K^n\setminus \{0\}$ not lying in $\Gamma$. In particular, by the assumptions on $u$, the point $(x_0,\eta_0)$ is $\Lambda$- micro-locally smooth for $u$, and thus, there is a ball $B(x_0,r)$ and an open neighborhood $U_{\eta_0}$ of $\eta_0$ such that for any
	$\varphi \in S(B(x_0,r))$ there is a constant $N$ such that $\mathcal F(\varphi u)(\lambda \eta)=0$  for all $\eta \in U_{\eta_0}$ and all $\lambda \in \Lambda$ with $\ord \lambda \leq N$.
	We denote by $U_{x_0}$ the ball $B_{2r}(x_0)$. For $\varphi \in \cS(U_{x_0})$ it is sufficient to prove the existence of an index $j_0$ such that $\mathcal F(\varphi u_j)(\lambda \eta)=0$  for any $j\geq j_0$, $\eta \in U_{\eta_0}$ and  $\lambda \in \Lambda$ with $\ord \lambda \leq N$. To this end, note that
	\begin{multline*}
		\langle \varphi u_j, \psi_K(\cdot \mid \lambda \eta)\rangle  = \langle u_j, \varphi \psi_K(\cdot \mid \lambda  \eta) \rangle
		= \langle  u ,\chi_j.(\Phi_j*  (\varphi \psi_K(\cdot \mid - \lambda  \eta )))\rangle  \\
		= \left < u ,x \to \chi_j(x).\left[\int_{y}\varphi(x+y)\psi_K(y\mid \lambda \eta) \Phi_j(y)dy\right]\psi_K(x\mid \lambda \eta) \right>
        \end{multline*}
and that there is an index $j_0$ such that for $j$ with $j\geq j_0$ one has
	$$\Supp  \chi_j \cap (\Supp  \Phi_j + \Supp  \varphi) \: = \: (\Supp  \Phi_j + \Supp  \varphi) \subset B(x_0,r).
$$
The function
$$
x\to \chi_j(x) \int_{y}\varphi(x+y)\psi_K(y\mid \lambda \eta) \Phi_j(y)|dy|
$$
is clearly a Schwartz-Bruhat function,
with support in the ball $B(x_0,r)$. By the microlocal smoothness we obtain
	$$\langle  u ,\chi_j.(\Phi_j* {\varphi \psi_K(\cdot\mid -\lambda \eta)})\rangle  = 0 $$
	for all $j>j_0$, all $\lambda$ with $\ord \lambda \leq N$ and all $\eta \in U_{\eta_0}$. This proves the $\cS'_{\Gamma,\Lambda}$-convergence.
The remaining implication follows easily from (2) and (1), by definition of $\cS'$-convergence.
\end{proof}

%Let $X$ be strict $C^1$ submanifold of $K^n$.

Let $f:X\subset K^n\to Y\subset K^m$ be a strict $C^1$ morphism of strict $C^1$ submanifolds.
Composition with $f$ gives a pullback morphism $$f^*:\mathcal C^\infty(Y)\to \mathcal C^\infty(X):h\mapsto h\circ f.$$

For $\Gamma$ a $\Lambda$-cone in $T^*Y\setminus\{0\}$, define the cone $f^*(\Gamma)$ by
$$
f^*\Gamma = \{ (x,\xi) \in T^*X\setminus\{0\} \mid \exists (y,\eta)\in \Gamma\, \mbox{ with } f(x)=y \mbox{ and } \xi = {}^tDf(x)\eta\}.
$$
Likewise, for $\Delta$ a $\Lambda$-cone in $T^*X\setminus\{0\}$, define the cone $f_*(\Delta)$ by
$$
f_*\Delta = \{ (y,\eta) \in T^*Y\setminus\{0\} \mid \exists (x,\xi)\in \Delta\, \mbox{ with } f(x)=y \mbox{ and } \xi = {}^tDf(x)\eta) \}.
$$
Finally, define the $\Lambda$-cone $N_f$ by
$$
N_f = \{ (y , \eta) \in T^*Y\setminus\{0\} \mid \exists x\in X \mbox{ with $f(x)=y$ and } 0={}^tDf(x)\eta \}.
$$

\begin{thm}[Pull-backs of distributions]\label{thm:pull}
Let $f:X\subset K^n\to Y\subset K^m$ be a strict $C^1$ morphism of strict $C^1$ submanifolds.  Let $\Gamma$ be a closed $\Lambda$-cone in $T^*Y\setminus\{0\}$ such that
$$
N_f\cap \Gamma = \emptyset.
$$
Then the pullback
$$
f^*:\mathcal C^\infty(Y)\to \mathcal C^\infty(X),\: h \mapsto h \circ f
$$
has a unique continuous extension

$$
f^*:\cS'_{\Gamma,\Lambda}(Y) \to \cS'(X),
$$
for the topology from Definition \ref{def:S'_Gamma-convergence} on $\cS'_{\Gamma,\Lambda}(Y) $ and from Definition \ref{def:topS'} on $\cS'(X)$.
Moreover, %\change{by construction}
$\Supp (f^* u) \subset f^{-1}(\Supp u)$ and $f^*(u)$ lies in $\cS'_{f^*\Gamma,\Lambda}(X)$ for each $u\in \cS'_{\Gamma,\Lambda}(Y)$.
Furthermore, $f^*$ induces a continuous map
$$
\cS'_{\Gamma,\Lambda}(Y) \to \cS'_{f^*\Gamma,\Lambda}(X),
$$
with both topologies from Definition \ref{def:S'_Gamma-convergence}.
\end{thm}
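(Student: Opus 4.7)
The strategy is to extend $f^*$ by approximation. Given $u \in \cS'_{\Gamma,\Lambda}(Y)$, apply Proposition \ref{SB-approx-S':bis} to obtain a sequence $(u_j)_j$ of Schwartz-Bruhat functions on $Y$ converging to $u$ in the $\cS'_{\Gamma,\Lambda}$-sense. Each $u_j \in \cS(Y)$ gives a smooth pullback $f^* u_j = u_j \circ f \in \mathcal{C}^\infty(X)$ and hence a distribution on $X$. Define $f^* u$ as the limit of $(f^* u_j)_j$ in $\cS'(X)$. The main task is to show this limit exists and is independent of the approximating sequence; uniqueness of the continuous extension then follows from the density of $\cS(Y)$ in $\cS'_{\Gamma,\Lambda}(Y)$ supplied by Proposition \ref{SB-approx-S':bis}.

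Working locally via charts and a partition of unity on the support of a test function $\varphi \in \cS(X)$, one may assume $X \subset K^n$ and $Y \subset K^m$ are open. Choose a Schwartz-Bruhat cutoff $\chi \in \cS(Y)$ equal to $1$ on a neighborhood of the compact set $f(\Supp \varphi)$, so that $u_j(f(x))\varphi(x) = (\chi u_j)(f(x))\varphi(x)$. Fourier inversion on the Schwartz-Bruhat function $\chi u_j$ then gives
\begin{equation*}
\langle f^* u_j, \varphi \rangle \ =\ \int_{K^m} \mathcal{F}(\chi u_j)(\eta)\, I_\varphi(-\eta)\, d\eta,
\end{equation*}
where $I_\varphi(\eta) = \int_X \varphi(x)\, \psi_K(f(x) \cdot \eta)\, dx$. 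This oscillatory integral has phase $p(x,\eta) = f(x)\cdot\eta$ with $x$-gradient ${}^{t}Df(x)\eta$, and the plan is to split the $\eta$-integration according to whether $\eta$ lies in a chosen $\Lambda$-conical neighborhood of $\Gamma$ over $f(\Supp\varphi)$ or not, exploiting $\Gamma \cap N_f = \emptyset$ on each piece.

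After shrinking $\Supp\varphi$ near a point $x_0$ with $y_0 = f(x_0)$, one constructs a closed $\Lambda$-conical neighborhood $V \subset K^m \setminus \{0\}$ of the fiber $\Gamma_{y_0} = \{\eta : (y_0,\eta) \in \Gamma\}$ such that $|{}^{t}Df(x)\eta| \geq c|\eta|$ for some $c > 0$, all $x \in \Supp\varphi$, and all $\eta \in V$. Such $V$ exists because $\Gamma$ is closed, $Df$ is continuous, $\Gamma \cap N_f = \emptyset$, and the quotient $S_\Lambda^{(m)}$ is compact. On $V$, Proposition \ref{prop:stat} with parameter $\eta$ on a $\Lambda$-slice shows that $\lambda \mapsto I_\varphi(\lambda\eta)$ has bounded $\lambda$-support uniformly in $\eta$. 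On the complement $V' = (K^m \setminus \{0\}) \setminus V$, no $(y,\eta)$ with $y$ in a sufficiently small neighborhood of $y_0$ and $\eta \in V'$ lies in $\Gamma$, so a finite covering of a $\Lambda$-slice in $V'$, combined with condition (2) of Definition \ref{def:S'_Gamma-convergence} applied to $\chi$, yields a uniform-in-$j$ $\Lambda$-bound on the support of $\mathcal{F}(\chi u_j)$ over $V'$. Each of the two integrals therefore reduces to one over a bounded region of $\eta$, which by Parseval can be rewritten as $\langle \chi u_j, \widetilde{g} \rangle$ for a Schwartz-Bruhat $\widetilde{g}$ depending only on $\varphi$, $f$ and the region. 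The $\cS'$-convergence $u_j \to u$ then delivers convergence of $\langle f^* u_j, \varphi \rangle$ to a limit depending only on $u$.

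Independence of the approximating sequence follows by the standard mixing argument. The support inclusion $\Supp f^* u \subset f^{-1}(\Supp u)$ is immediate, since if $\Supp \varphi \cap f^{-1}(\Supp u) = \emptyset$, the cutoff $\chi$ can be chosen to vanish on a neighborhood of $\Supp u$, forcing the pairing to tend to zero. The wave-front inclusion $\WF_\Lambda(f^* u) \subset f^* \Gamma$ and the stronger continuity into $\cS'_{f^*\Gamma,\Lambda}(X)$ are obtained by re-running the same Fourier-inversion and cone-decomposition scheme on test functions $\varphi \psi_K(\cdot \mid \lambda \xi)$ with $(x_0,\xi_0) \notin f^*\Gamma$: the phase becomes $x \cdot \lambda\xi - f(x) \cdot \eta$ with $x$-gradient $\lambda\xi - {}^{t}Df(x)\eta$, and the splitting now uses the hypothesis $\xi_0 \neq {}^{t}Df(x_0)\eta$ for all $\eta \in \Gamma_{y_0}$ to control the stationary-phase piece, while the complementary piece is handled exactly as above by condition (2). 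I expect the main technical obstacle to be the coordinated choice of $V$, which must be narrow enough for Proposition \ref{prop:stat} to apply uniformly on a $\Lambda$-slice, yet wide enough that its complement admits a uniform-in-$j$ Fourier support bound from Definition \ref{def:S'_Gamma-convergence}(2), both achieved over a single compact slice.
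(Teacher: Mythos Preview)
Your proposal is correct and follows essentially the same route as the paper: approximation by Schwartz--Bruhat functions via Proposition~\ref{SB-approx-S':bis}, reduction to open subsets via charts, the Fourier-inversion formula $\langle f^* u_j,\varphi\rangle=\int \mathcal F(\chi u_j)(\eta)I_\varphi(-\eta)\,d\eta$, and the splitting of the $\eta$-integral into a $\Lambda$-conical neighborhood $V$ of $\Gamma_{y_0}$ (handled by Proposition~\ref{prop:stat}) and its complement (handled by condition~(2) of Definition~\ref{def:S'_Gamma-convergence}). The only cosmetic difference is that you invoke Parseval to rewrite the bounded-region integral as a pairing $\langle \chi u_j,\widetilde g\rangle$, whereas the paper appeals to Proposition~\ref{prop:uniform-convergence} (uniform convergence of $\mathcal F(\chi u_j)$ on compacts) to pass to the limit directly; both conclude the same way.
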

By Taking $\Gamma = \WF_\Lambda(u)$ in Theorem \ref{thm:pull} one finds that $f^*(u)$ is defined when $\WF_\Lambda(u)$ is disjoint from $N_f$, with furthermore
$$
\WF_\Lambda(f^*u) \subset f^* (\WF_\Lambda(u)).
$$

\begin{proof}[Proof of Theorem \ref{thm:pull}]
Uniqueness follows from the continuity of $f^*$ and Proposition \ref{SB-approx-S':bis}.
It is sufficient to prove the existence in the case that $X=K^m$ and $Y=K^n$ by working locally and with charts.
%Recall that $\Lambda \subset K^\times$ is an open subgroup of finite index in $K^\times$.
%We consider the action of $\Lambda$ on $K^n\setminus\{0\}$ by multiplication.
We adapt the proofs of \cite[Theorem 2.8]{Hei85a} and \cite[8.2.4]{Hormander83}, with extra focus on continuity in comparison to \cite[Theorem 2.8]{Hei85a}.
%For any $u$ in $\cS(Y)$ we define $f^*u$ by $u\circ f$ which is a locally constant function. \\
Using the density of $\cS(Y)$ in $\cS'_{\Gamma, \Lambda}(Y)$, we define the pullback morphism $f^*$ from $\cS'_{\Gamma,\Lambda}(Y)$ to $\cS'(X)$. More precisely, for a distribution $u$ in $\cS'_{\Gamma,\Lambda}(Y)$, for any $x_0$ in $X$ we define a convenient neighborhood $X_0$ of $x_0$ and $f^*u$ on $\cS(X_0)$. The construction $f^*u$ extends naturally to $\cS(X)$ by linearity and compactness of the support of a Schwartz-Bruhat function. We will finally need to show that this is well-defined, continuous and goes to
$\cS'_{f^*\Gamma,\Lambda}(X)$.
%In particular we define $f^*u$ locally and extend the construction by compactness.\\

Let $x_0 \in X$, $y_0= f(x_0)$. If, $y_0$ does not belong to $\pi_Y(\Gamma)$, then as $\WF_\Lambda(u)$ is included in $\Gamma$, $y_0$ is a smooth point, and locally around $y_0$, $u$ is a $C^{\infty}$ function, and the pull-back
$f^*u$ is already defined as $u \circ f$ around $y_0$. Assume now that $y_0$ belongs to $\pi_Y(\Gamma)$, then
$\Gamma_{y_0} = \{\eta \mid (y_0,\eta) \in \Gamma\}$ is non empty.

In the following steps we construct some neighborhoods $V$ of $\Gamma_{y_0}$, $Y_0$ of $y_0$ and $X_0$ of $x_0$, and we start the construction of $\langle f^*u,\chi\rangle $ for any $\chi$ in $\cS(X_0)$.
Recall that the action of $\Lambda$ on $K^n\setminus\{0\}$ induces an equivalence relation on $K^n\setminus\{0\}$, we denote by $S_{\Lambda}^{(n)}$ the quotient space and
we identify it to a compact subspace of $K^n$.

\emph{Step 1}. As $\Gamma_{y_0} \cap N_{f,y_0}$ is empty, by continuity on the compact set
$\Gamma_{y_0}\cap S_{\Lambda}^{(n)}$, the map $\:^{t}Df(x_0)$ admits a minimum $\delta>0$. By compactness, there is an open-closed conic neighborhood $V$ of $\Gamma_{y_0}$ in $K^n \setminus \{0\}$ with $^{t}Df(x_0)\eta \neq 0$ for any $\eta \in V$. For instance we can consider $V$ as
$$V = \Lambda \left(\bigcup_{\eta \in \Gamma_{y_0}\cap S_{\Lambda}^{(n)}} B_\eta \right).$$
where $(B_\eta)$ is a finite covering of $\Gamma_{y_0}\cap S_{\Lambda}^{(n)}$ by balls $B_\eta$ centered at $\eta$, such that for any $\eta$ in $\Gamma_{y_0}\cap S_{\Lambda}^{(n)}$, for any $\eta'\in B_\eta$, $\abs{ ^{t}Df(x_0)\eta'}>\delta$.

\emph{Step 2}. As $\Gamma$ is closed, there is a compact neighborhood $Y_0$ of $y_0$ such that $V$ is a neighborhood of $\Gamma_y$ for every $y \in Y_0 \cap \pi_Y(\Gamma)$. Else, considering a familly of balls centered at $y_0$ with valuative radius
$r_n \to +\infty$, there is a sequence $(y_\ell,\eta_\ell)$ with $\eta_\ell$ in $\Gamma_{y_\ell}\cap S_{\Lambda}^{(n)} \setminus V$. By compactness of $S_{\Lambda}^{(n)}$, we can extract a subsequence converging to a point $(y_0,\eta)$. As $V$ is open and $\Gamma$ is closed , $(y_0,\eta)$ belongs to $\Gamma_{y_0}\setminus V$ which shows that $\Gamma_{y_0} \not\subset V$, a contradiction.

\emph{Step 3}. There is a compact neighborhood $X_0$ of $x_0$ with $f(X_0)$ in the interior of $Y_0$ and $^{t}Df(x)\eta \neq 0$ for any
$x \in X_0$ and $\eta \in V$.
Indeed, there is $\delta>0$ such that for any $\eta_0$ in $V\cap S_{\Lambda}^{(n)}$, $\abs{\:^{t}Df(x_0)\eta_0}>\delta$.
By continuity of $^{t}Df(.).$, there are neighborhoods $U_{x_0,\eta_0}$ of $x_0$ and $\check{U}_{x_0,\eta_0}$ of $\eta_0$ such that for any $x \in U_{x_0,\eta_0}$ and $\eta \in \hat{U}_{x_0,\eta_0}$ we have $\abs{\:^{t}Df(x)\eta}>\delta$.
By compactness of $V\cap S_{\Lambda}^{(n)}$ and Borel-Lebesgue property, there is a finite covering of
$V\cap S_{\Lambda}^{(n)}$ by open sets $(\hat{U}_{x_0,\eta_i})$ and we consider $U_{x_0} = \cap_{i} U_{x_0,\eta_i}$.
%In particular for any $x \in U_{x_0}$ and $\eta \in V\cap S_{\Lambda}^{(n)}$ we have $\abs{\:^{t}Df(x)\eta}>\delta$.
We consider $X_0$ as an open compact neighborhood contained in $f^{-1}(Y_0)\cap U_{x_0}$.

\emph{Step 4}. By proposition \ref{SB-approx-S':bis}, let $(u_\ell)$ be a sequence of $\cS(Y)$ which converges to $u$ for the $\cS'_{\Gamma, \Lambda}$ convergence.
We can assume $Y_0$ sufficiently small such that for any $\varphi \in \cS(Y_0)$, there is an integer $j_\varphi$ such that, for any $j\geq j_\varphi$, $\mathcal F(\varphi u_j)1_{V^c}$ has a \textcolor{blue}{compact support which is moreover} independent from $j$ where
$V^c$ is the complement of $V$ in $K^{n} \setminus \{0\}$. This follows from the compactness of $V^c \cap S_{\Lambda}^{(n)}$ and the definition of the $\cS'_{\Gamma, \Lambda}$ convergence.

Using, $x_0$, $y_0$, $V$, $X_0$, $Y_0$ and $(u_\ell)$ the sequence of $\cS(Y)$ defined above, we give now the construction of $\langle f^*u,\chi\rangle $ for any $\chi$ in $\cS(X_0)$.
We fix also $\varphi$ be in $\cS(Y_0)$ equal to $1$ on $f(X_0)$ hence for any $\ell$, $f^{*}(\varphi u_\ell)$ is equal to
$f^{*} u_\ell$ and below nothing will depend on $\varphi$.
Let $\chi$ be a Schwartz-Bruhat function in $\cS(X_0)$. By the Fourier's inversion formula, we have for any $\ell$,
$$\langle f^{*}(\varphi u_\ell),\chi\rangle  = \int_{\eta \in K^n} \mathcal F(\varphi u_\ell)(\eta) I_{\chi}(\eta) d\eta $$
with
\begin{equation} \label{formuleIchi}
I_{\chi}(\eta) = \text{vol}(B_1)\int_{x \in X}\chi(x)\psi(-f(x)\mid \eta) dx.
\end{equation}
%We prove that $<f^{*}(\varphi u_\ell),\chi>$ converges.\\
%
%$\bullet$ We prove the convergence of $\int_{V} \mathcal F(\varphi u_\ell)(\eta) I_{\chi}(\eta) d\eta.$ \\
By construction of $X_0$, compactness of $V\cap S_{\Lambda}^{(n)}$ and the stationary phase formula, the function $I_{\chi}1_{V}$ has \blue{a compact support}.
By proposition \ref{prop:uniform-convergence}, the sequence $\mathcal F(\varphi u_\ell)$ converges uniformly to
$\mathcal F(\varphi u)$, then we deduce the convergence
$$\int_{V} \mathcal F(\varphi u_\ell)(\eta) I_{\chi}(\eta) d\eta \to
\int_{V} \mathcal F(\varphi u)(\eta) I_{\chi}(\eta) d\eta.$$
% DETAILS : By construction of $X_0$, there is $\delta>0$ such that for any $x$ in the support of $\chi$, for any $\eta$ in
%$V\cap S_{\Lambda}^{(n)}$ we have $\abs{^{t}Df(x)\eta} > \delta$.
%By the stationary phase formula, there is a bound $N$ such that for any $\eta$ in $V\cap S_{\Lambda}^{(n)}$, the integral $I_{\chi}(\lambda\eta)$ equals zero for any $\lambda \in \Lambda$ with $\ord \lambda < N$.
%Hence, we deduce that the function $I_{\chi}1_{V}$ has a bounded support which could be chosen compact.\\
%By proposition \ref{prop:uniform-convergence}, the sequence $\mathcal F(\varphi u_\ell)$ converges uniformly to
%$\mathcal F(\varphi u)$, then we deduce the convergence
%$$\int_{V} \mathcal F(\varphi u_\ell)(\eta) I_{\chi}(\eta) d\eta \to
%\int_{V} \mathcal F(\varphi u)(\eta) I_{\chi}(\eta) d\eta.$$
%
%$\bullet$ We prove the convergence of
%$\int_{V^c} \mathcal F(\varphi u_\ell)(\eta) I_{\chi}(\eta) d\eta $.\\
By \emph{step 4}, $\mathcal F(\varphi u_\ell)$ has a \textcolor{blue}{compact} support on $V^c$ which is independent from $\ell$ \blue{when $\ell$ is} large enough.
By convergence in $\cS'$
the sequence $\mathcal F(\varphi u_\ell)$ converges uniformly to
$\mathcal F(\varphi u)$ on it. Then, we deduce the convergence
$$\int_{V^c} \mathcal F(\varphi u_\ell)(\eta) I_{\chi}(\eta) d\eta \to
\int_{V^c} \mathcal F(\varphi u)(\eta) I_{\chi}(\eta) d\eta.$$
Hence, for any $x_0$ in $X$, with $f(x_0)$ in $\pi_Y(\Gamma)$ we have defined an open compact neighborhood $X_0$ of $x_0$ such that for any $\chi$ in $\cS(X_0)$ we define
\begin{equation} \label{formulef*u}
\langle f^{*}u,\chi\rangle  = \int_{V} \mathcal F(\varphi u)(\eta) I_{\chi}(\eta) d\eta +
\int_{V^c} \mathcal F(\varphi u)(\eta) I_{\chi}(\eta) d\eta.
\end{equation}

We extends naturally the construction of $f^*u$ to $\cS(X)$ by linearity and compactness of the support of a Schwartz-Bruhat function. If $u$ is a Schwartz-Bruhat function then by construction $f^*u = u \circ f$.

We prove now for any $u$ in $\cS'(X)$ the inclusion
\begin{equation} \label{eq:inclusion-WF-pullback}
	\WF_{\Lambda}(f^*u)\subset f^* \Gamma.
\end{equation}
Let $(x_0,\xi_0) \notin f^*\Gamma$ and prove that
$(x_0,\xi_0)\notin \WF(f^*u)$.
As before, we write $y_0 = f(x_0)$. If $y_0$ is not in $\pi_Y(\Gamma)$ then $y_0$ is a smooth point of $u$ and by construction $x_0$ is a smooth point of $f^*u$, so $(x_0,\xi_0)$ does not belong to $\WF_{\Lambda}(f^*u)$.
Assume now $y_0$ to be in $\pi_Y(\Gamma)$. We denote by $(f^*\Gamma)_{x_0}$ the image $\: ^{t}Df(x_0)(\Gamma_{y_0})$.
By assumption $\xi_0$ does not belong to $(f^*\Gamma)_{x_0}$.
We consider a $\Lambda$-conical open neighborhood $W$ of $(f^*\Gamma)_{x_0}$, which does not contain $\xi_0$.
As $f$ is strict $\mathcal C^1$, it is also $\mathcal C^1$ and the map $\:^{t}Df:(x,\eta) \mapsto \: ^{t}Df(x)\eta$ is continuous. The inverse image $\:^{t}Df^{-1}(W)$ is open and contains $\{x_0\}\times \Gamma_{y_0}$. Taking $X_0$ and $V$ defined in previous steps,
$(X_0\times V)\cap \:^{t}Df^{-1}(W)$ is open, $\Lambda$-conical and contains $\{x_0\}\times \Gamma_{y_0}$.
We can restrict $X_0$ and $V$ such that $X_0\times V$ is included in $\:^{t}Df^{-1}(W)$.
We consider an open ball $B_{\xi_0}$ centered in $\xi_0$ with radius larger than $\ord \xi_0$ and such that
$B_{\xi_0}\cap W = \emptyset$. In particular, the order of each element of the ball $\xi$ is equal to $\ord \xi_0$.
Thus there is $\varepsilon>0$ such that for any $x$ in $X_0$, for any $\eta$ in $V$, for any $\xi$ in $B_{\xi_0}$ we have
\begin{equation} \label{equalityeps}
|\:^{t}Df(x)(\eta)-\xi|\geq \varepsilon (|\eta| + |\xi|).
\end{equation}
Let $\chi \in \cS(X_0)$. By formulas \ref{formuleIchi} and \ref{formulef*u}, and the change variable formula we have
for any $\xi$ in $B_{\xi_0}$
$$
	\mathcal F(\chi f^*u)(\lambda \xi)  =  \abs{\lambda}^n
	                                 \int_{\eta \in K^n} \mathcal F(\varphi u)(\lambda \eta)
			                  I_{\chi}(\xi, \eta)
					 d\eta
$$
with $$I_{\chi}(\xi, \eta) = \lambda \to \int_{X} \chi(x)\psi_K((x\mid \lambda \xi) -(f(x) \mid \lambda \eta))dx$$
and $\varphi$ is a Schwartz-Bruhat function in $\cS(Y_0)$ equal to 1 on $f(X_0)$.
We show that for any $\xi$ in $B_{\xi_0}$, the function $\lambda \mapsto \mathcal F(\chi f^*u)(\lambda \xi)$ has a bounded support \textcolor{blue}{which is moreover} independent from $\xi$ and we deduce that the point $(x_0,\xi_0)$ is microlocally smooth.
Indeed denoting $r$ the integer $\ord \xi_0 - \min _{x \in supp \chi} \ord\:^{t}Df(x)$ and working separately on a ball $B_r$ with $r$ large enough, on \textcolor{blue}{$V\setminus B_r$} and on $V^c\setminus B_r$ we prove that
$I_\chi(\xi,\eta)$ has a bounded support \textcolor{blue}{which is independent from $\xi$ in $B_{\xi_0}$ and $\eta$ in $K^n$}. For any $\xi$ in $B_{\xi_0}$, and any $\eta$ in $K^n$ with $\ord \eta >r$ we have 	$$\ord(\:^{t}Df(x)\eta - \xi) = \ord \xi = \ord \xi_0$$ then by the sationnary phase formula,
$I_\chi(\xi,\eta)$ has a bounded support \textcolor{blue}{which is} independent from \textcolor{blue}{$\xi$ in $B_{\xi_0}$} and $\eta$ in $B_r$.
As well, by \ref{equalityeps} and the stationary phase formula,
%for any $\xi$ in $B_{\xi_0}$, for any $\eta$ in \textcolor{blue}{$V\setminus B_r$},
the function $I_\chi(\xi,\eta)$
%$$\lambda \mapsto \int_{X}\chi(x)\psi_K((x\mid \lambda \xi) -(f(x) \mid \lambda \eta))dx $$
has a compact support \textcolor{blue}{which is} independent from \textcolor{blue}{$\xi$ in $B_{\xi_0}$ and $\eta$ in $V\setminus B_r$}.
Finally, for any $\eta$ in $V^c\setminus B_r$, by assumption on $\WF_\Lambda(u)$, $(y_0,\eta)$ is a micro-locally
smooth point, in particular $\lambda \to \mathcal F(\varphi u)(\lambda \eta)$ has a bounded support, with a bound locally independent from $\eta$ in $V^{c}\cap S_{\Lambda}^{(n)}$. Again by compactness, we deduce a uniform bound in $\eta$ in $V^{c}\cap S_{\Lambda}^{(n)}$.
Hence, $\lambda \to \mathcal F(\varphi u)(\lambda \eta)$ has a bounded support \textcolor{blue}{which is} independent from
$\eta \in V^c\setminus B_r$.

We end the proof by the continuity of $f^*$. Let $(u_j)$ be a sequence of $\cS'_{\Gamma, \Lambda}(Y)$ which converges to $u$ in
$\cS'_{\Gamma, \Lambda}(Y)$. Let's prove that $(f^* u_j)$ converges to $f^*u$ in $\cS'_{f^*\Gamma, \Lambda}(X)$.
By construction of $f^*u$ and its independance from the sequence $(u_j)$, for any $\chi$ in $\cS(X)$, $(\langle f^*u_j, \chi\rangle )$ converges to $\langle f^*u,\chi\rangle $.
We deduce the second point of the definition of the $\cS'_{f^*\Gamma, \Lambda}$-convergence from the definition of the $\cS'_{\Gamma, \Lambda}(Y)$-convergence of $u_j$ and the proof of $I_\chi(\xi,\eta)$ has a bounded support \textcolor{blue}{which is} independent from \textcolor{blue}{$\xi$ in $B_{\xi_0}$} and $\eta$ in $B_r$ or $V \setminus B_r$.
\end{proof}

\begin{prop}[Functoriality of the pull-back]
Let $f:X\subset K^n\to Y\subset K^m$ and $g : Y \subset K^m \to Z \subset K^\ell$ be strict $C^1$ morphisms of strict $C^1$ submanifolds.  Let $\Gamma$ be a $\Lambda$-cone in $T^*Z\setminus\{0\}$ such that
$$
N_g\cap \Gamma = \emptyset\:\:\text{and}\:\:N_f \cap g^*\Gamma = \emptyset.
$$
Then, the intersection $N_{g\circ f} \cap \Gamma$ is empty and we have the equality
$$(g \circ f)^* = f^* \circ g^*$$
where
$$
\begin{array}{rcl}
(g\circ f)^* & : & \cS'_{\Gamma,\Lambda}(Z) \to \cS'_{(g\circ f)^*\Gamma,\Lambda}(X), \\
g^* & : & \cS'_{\Gamma,\Lambda}(Z) \to \cS'_{g^*\Gamma,\Lambda}(Y),\\
f^* & : & \cS'_{g^*\Gamma,\Lambda}(Y) \to \cS'_{f^*(g^*\Gamma),\Lambda}(X),
\end{array}
$$
are given by the pull-back theorem applied to $(g\circ f)$, $f$ and $g$.
\end{prop}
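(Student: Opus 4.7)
The plan is to verify first that the hypotheses ensure $(g\circ f)^{*}$ is well-defined on $\cS'_{\Gamma,\Lambda}(Z)$, and then to deduce the equality $(g\circ f)^{*}=f^{*}\circ g^{*}$ from the uniqueness clause of Theorem~\ref{thm:pull} together with Proposition~\ref{SB-approx-S':bis}.

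First I would check that $N_{g\circ f}\cap\Gamma=\emptyset$. Suppose $(z,\zeta)$ lies in this intersection. Then there is $x\in X$ with $g(f(x))=z$ and ${}^{t}D(g\circ f)(x)\,\zeta=0$. By the chain rule for strict $C^{1}$ maps (which follows in particular from the Taylor development at order $1$ of \cite[Theorem 5.1 and Proposition 5.3]{BGlockN}), one has ${}^{t}D(g\circ f)(x)={}^{t}Df(x)\circ{}^{t}Dg(f(x))$, so that, setting $y=f(x)$ and $\eta={}^{t}Dg(y)\,\zeta$, we get ${}^{t}Df(x)\,\eta=0$. If $\eta=0$, then $(z,\zeta)\in N_{g}\cap\Gamma$, contradicting the first hypothesis. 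If $\eta\neq 0$, then $(y,\eta)\in g^{*}\Gamma$ by definition of $g^{*}\Gamma$, and $(y,\eta)\in N_{f}$ as well, contradicting the second hypothesis.

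Next I would verify the identity of $\Lambda$-cones $(g\circ f)^{*}\Gamma=f^{*}(g^{*}\Gamma)$ in $T^{*}X\setminus\{0\}$. The inclusion $f^{*}(g^{*}\Gamma)\subset (g\circ f)^{*}\Gamma$ is immediate from the chain rule. Conversely, given $(x,\xi)\in(g\circ f)^{*}\Gamma$, there is $(z,\zeta)\in\Gamma$ with $g(f(x))=z$ and $\xi={}^{t}Df(x)\bigl({}^{t}Dg(f(x))\,\zeta\bigr)$. Setting $\eta={}^{t}Dg(f(x))\,\zeta$, one must have $\eta\neq 0$ (otherwise $\xi=0$, excluded), so that $(f(x),\eta)\in g^{*}\Gamma$ and $(x,\xi)\in f^{*}(g^{*}\Gamma)$. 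In particular the three maps in the statement take values in the asserted spaces, the previous paragraph having shown that the composition $f^{*}\circ g^{*}$ makes sense.

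It remains to show $(g\circ f)^{*}u=f^{*}(g^{*}u)$ for every $u\in\cS'_{\Gamma,\Lambda}(Z)$. By Proposition~\ref{SB-approx-S':bis}, there is a sequence $(u_{j})_{j}$ of Schwartz-Bruhat functions on $Z$ converging to $u$ in the $\cS'_{\Gamma,\Lambda}$-sense. For each such smooth $u_{j}$, one has tautologically
\[
(g\circ f)^{*}u_{j}=u_{j}\circ(g\circ f)=(u_{j}\circ g)\circ f=f^{*}(g^{*}u_{j}),
\]
since on $\mathcal{C}^{\infty}(Z)$ all three pull-backs coincide with ordinary composition. By Theorem~\ref{thm:pull} applied to $g$, the sequence $(g^{*}u_{j})$ converges to $g^{*}u$ in $\cS'_{g^{*}\Gamma,\Lambda}(Y)$; applying Theorem~\ref{thm:pull} to $f$, the sequence $(f^{*}(g^{*}u_{j}))$ converges to $f^{*}(g^{*}u)$ in $\cS'(X)$. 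On the other hand, again by Theorem~\ref{thm:pull} applied to $g\circ f$, the sequence $((g\circ f)^{*}u_{j})$ converges to $(g\circ f)^{*}u$ in $\cS'(X)$. Passing to the limit in the tautological equality above yields $(g\circ f)^{*}u=f^{*}(g^{*}u)$, as desired.

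The only step requiring genuine care is the verification that the chain-rule argument of the first paragraph, combined with the definitions of $N_{f}$, $N_{g}$ and $g^{*}\Gamma$, exhausts the possibilities for an element of $N_{g\circ f}\cap\Gamma$; everything else is a soft continuity-plus-density argument hinging on Theorem~\ref{thm:pull} and Proposition~\ref{SB-approx-S':bis}.
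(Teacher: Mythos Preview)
Your proof is correct and follows essentially the same route as the paper's: verify $N_{g\circ f}\cap\Gamma=\emptyset$ via the chain rule, check $(g\circ f)^{*}\Gamma=f^{*}(g^{*}\Gamma)$, and then use density of Schwartz--Bruhat functions together with the continuity in Theorem~\ref{thm:pull} (which is exactly what underlies the uniqueness clause the paper invokes). If anything, your argument is slightly more careful than the paper's, since you explicitly separate the cases $\eta={}^{t}Dg(f(x))\zeta=0$ and $\eta\neq 0$ in the first step.
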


\begin{proof}
	Assume $N_{g\circ f} \cap \Gamma$ to be non empty and take $(z,\eta)$ an element. By definition, there is $x\in X$ such that
	$z=(g\circ f)(x)$ and $\:^{t}Df_x\left(\:^tDg_{f(x)}(\eta)\right)=0$. In particular, the point
	$\left(f(x),\:^tDg_{f(x)}(\eta)\right)$ belongs to $N_f$. By definition this point belongs also to the cone
	$g^* \Gamma$ which induces a contradiction with the fact $N_f \cap g^* \Gamma = \emptyset$. We conclude that
	$N_{g\circ f} \cap \Gamma = \emptyset$. Hence, by the pull-back theorem, there is a unique continuous map
	$(g\circ f)^*$ from $ \cS'_{\Gamma,\Lambda}(Z)$ to $\cS'_{(g\circ f)^*\Gamma,\Lambda}(X)$ such that for any
	Schwartz-Bruhat function $u$ in $\cS(Z)$, we have $(g\circ f)^*(u)=u \circ (g \circ f)$. Furthermore, by assumptions and the pull-back theorem, the pull-back morphisms $f^*$ and $g^*$ are well defined, the map
	$f^* \circ g^*$ from $ \cS'_{\Gamma,\Lambda}(Z)$ to $\cS'_{(g\circ f)^*\Gamma,\Lambda}(X)$ is continuous and for any Schwartz-Bruhat function $u$ in $\cS(Z)$, we have $f^*\circ g^*(u)=u \circ (g \circ f)$. By uniqueness, we obtain the equality $(g\circ f)^*=f^* \circ g^*$. Finally, using the definition we have the equality
	$$f^{*}\left(g^{*} \Gamma\right) = (g\circ f)^{*}\Gamma.$$
\end{proof}

\begin{cor} \label{cor:invariancebydiffeo}
Let $f:X\subset K^n\to Y\subset K^m$ be a strict $C^1$ isomorphisms between strict $C^1$ submanifolds.
Then the cone $N_f$ is empty and for any distribution $u \in \cS'(Y)$ the pull-back $f^*u$ given by the pull-back theorem
satisfies
$$ \WF_\Lambda(f^*u) = f^* (\WF_\Lambda(u)).$$
\end{cor}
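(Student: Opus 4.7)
The plan is to deduce both claims from Theorem \ref{thm:pull} together with the functoriality of the pull-back, applied symmetrically to $f$ and to $f^{-1}$.

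First I would verify that $N_f$ is empty. Since $f$ is a strict $C^1$ isomorphism, $Df(x)$ is invertible at every $x \in X$ (this is part of the standard consequences of the inverse function theorem for strict $C^1$ maps cited before Definition \ref{def:C^1}). Consequently its transpose $^tDf(x)$ is a linear isomorphism of the corresponding cotangent fibers, so the relation $^tDf(x)\eta = 0$ forces $\eta = 0$. Thus no point $(y,\eta) \in T^*Y \setminus \{0\}$ can belong to $N_f$, and $N_f = \emptyset$.

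Consequently, for any closed $\Lambda$-cone $\Gamma \subset T^*Y \setminus \{0\}$ the pull-back $f^*$ is defined on $\cS'_{\Gamma,\Lambda}(Y)$ by Theorem \ref{thm:pull}. Applying this with $\Gamma = \WF_\Lambda(u)$ yields the first inclusion
\[
\WF_\Lambda(f^* u) \subset f^*(\WF_\Lambda(u)).
\]
For the reverse inclusion I would apply the same reasoning to the strict $C^1$ isomorphism $g := f^{-1}:Y \to X$, which also satisfies $N_g = \emptyset$ by the same argument. By functoriality of the pull-back (the previous proposition), applied to the composition $g \circ f = \mathrm{id}_X$, we have $g^* \circ f^* = \mathrm{id}$ on $\cS'_{\WF_\Lambda(u),\Lambda}(Y)$, so $g^*(f^* u) = u$. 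Theorem \ref{thm:pull} applied to $g$ and the distribution $f^*u$ then gives
\[
\WF_\Lambda(u) = \WF_\Lambda(g^*(f^* u)) \subset g^*(\WF_\Lambda(f^* u)).
\]

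To conclude, I would observe that the cone operations $f^*$ and $g^* = (f^{-1})^*$ are inverse bijections on cones in $T^*X \setminus \{0\}$ and $T^*Y \setminus \{0\}$: starting from $(y,\eta) \in T^*Y \setminus \{0\}$, the definition of $f^*$ produces $(f^{-1}(y), {}^tDf(f^{-1}(y))\eta)$, and applying $g^*$ to this point recovers $(y,\eta)$ because ${}^tDg(f(x)) = ({}^tDf(x))^{-1}$ at $x = f^{-1}(y)$. Hence applying $f^*$ to both sides of the previous displayed inclusion yields $f^*(\WF_\Lambda(u)) \subset \WF_\Lambda(f^*u)$, and combined with the first inclusion this proves equality. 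The whole argument is essentially formal once Theorem \ref{thm:pull} is in place; the only mildly delicate point, which I would state carefully, is the bijectivity of $f^*$ on cones when $f$ is an isomorphism, so that one is allowed to ``cancel'' $g^*$ on the right-hand side.
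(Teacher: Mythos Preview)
Your proof is correct and follows essentially the same route as the paper's: apply Theorem \ref{thm:pull} with $\Gamma = \WF_\Lambda(u)$ to get one inclusion, then use functoriality with $f^{-1}$ to get the other. One small bookkeeping slip: to obtain $g^* \circ f^* = \mathrm{id}$ on $\cS'(Y)$ you should invoke $f \circ g = \mathrm{id}_Y$ (which gives $(f\circ g)^* = g^* \circ f^*$), not $g \circ f = \mathrm{id}_X$; the conclusion is unaffected since both compositions are identities, but the contravariance of the pull-back is what dictates which one you cite.
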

In fact this corollary follows already from Remark \ref{rem:WFcharts}. Alternatively, one can use Theorem \ref{thm:pull} as follows.
\begin{proof}[Proof of Corollary \ref{cor:invariancebydiffeo}]
Since $f$ is a strict $C^1$ isomorphism, the submanifolds $X$ and $Y$ have same dimension and for any point $x$ in $X$ the differential $Df(x)$ is invertible and $N_f$ is necessarly empty. For any distribution $u$ in $\cS'(Y)$, taking
	$\Gamma = \WF_\Lambda(u)$, the pull-back $f^*u$ given by the pull-back theorem satisfies
        $$ \WF_\Lambda(f^*u) \subset f^* (\WF_\Lambda(u)).$$
	As well, we have
	$$ \WF_\Lambda\left((f^{-1})^{*}(f^*u)\right) \subset (f^{-1})^* (\WF_\Lambda(f^*u))$$
	namely by functoriality of the pull-back
	$$ \WF_\Lambda(u) \subset (f^{-1})^* (\WF_\Lambda(f^*u))$$
	and applying $f^*$ we get
	$$ f^* (\WF_\Lambda(u)) \subset \WF_\Lambda(f^*u).$$
\end{proof}

\begin{thm}[Push-forwards of distributions]\label{thm:push:WF}
Let $f:X\subset K^n\to Y\subset K^m$ be a strict $C^1$ morphism between strict $C^1$ submanifolds and let $u$ be a distribution on $X$. Suppose that the restriction of $f$ to the support of $u$ is proper.
Then one has
$$
\WF_\Lambda(f_* u) \subset f_*(\WF_\Lambda(u))\cup N_f.
$$
\end{thm}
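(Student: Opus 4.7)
The plan is to show that any $(y_0,\eta_0)\in T^*Y\setminus\{0\}$ outside $f_*(\WF_\Lambda(u))\cup N_f$ is $\Lambda$-micro-locally smooth for $f_*u$.  Since $f$ restricted to $\Supp(u)$ is proper, the fiber $K_0:=f^{-1}(y_0)\cap\Supp(u)$ is compact.  For each $x\in K_0$, the hypothesis $(y_0,\eta_0)\notin N_f$ gives $\xi_x:={}^tDf(x)\eta_0\neq 0$, while $(y_0,\eta_0)\notin f_*(\WF_\Lambda(u))$ gives $(x,\xi_x)\notin\WF_\Lambda(u)$, so $u$ is $\Lambda$-smooth at $(x,\xi_x)$.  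Using continuity of ${}^tDf$ together with openness of the witnessing neighborhoods, I extract for each $x\in K_0$ an open $U_x\ni x$, an open $\Lambda$-cone $V_x\ni\xi_x$, an open $V_{\eta_0,x}\ni\eta_0$, and $\delta_x>0$ such that ${}^tDf(x')\eta\in V_x$ and $|{}^tDf(x')\eta|\geq\delta_x$ for all $(x',\eta)\in U_x\times V_{\eta_0,x}$, and such that the $\Lambda$-smoothness of $u$ at $(x,\xi_x)$ is witnessed by the pair $U_x,V_x$.

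A finite subcover of $K_0$ produces $U_{x_1},\ldots,U_{x_k}$; set $V_{\eta_0}:=\bigcap_i V_{\eta_0,x_i}$.  A diagonal compactness argument using properness of $f|_{\Supp(u)}$ gives a neighborhood $U_{y_0}$ of $y_0$ with $f^{-1}(U_{y_0})\cap\Supp(u)\subset\bigcup_i U_{x_i}$, and I pick a locally constant partition of unity $\sum_i\chi_i=1$ on a neighborhood of this set with $\chi_i\in\cS(U_{x_i})$.  For any $\varphi\in\cS(U_{y_0})$, the definition of $f_*$ combined with the Paley--Wiener representation of Theorem~\ref{thm:Paley-Wiener} yields
\[
\mathcal F(\varphi f_*u)(\lambda\eta)=\langle u,(\varphi\circ f)\psi_K(f(\cdot)\mid\lambda\eta)\rangle=\sum_{i=1}^{k}\langle u,\chi_i(\varphi\circ f)\psi_K(f(\cdot)\mid\lambda\eta)\rangle,
\]
so the theorem reduces to showing that each summand vanishes for $\eta\in V_{\eta_0}$ and $\lambda\in\Lambda$ with $\ord\lambda$ sufficiently small, uniformly in $\eta$.

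To handle a single summand I use Proposition~\ref{SB-approx-S':bis} to approximate $u$ by Schwartz--Bruhat functions $u_n$ converging in the $\cS'_{\WF_\Lambda(u),\Lambda}$-sense, so the problem reduces to the oscillatory integrals $\int u_n\chi_i(\varphi\circ f)(x)\psi_K(\lambda f(x)\mid\eta)\,dx$, which must vanish for $\ord\lambda<N$ with $N$ independent of $n$.  Choose $\psi_i\in\cS(U_{x_i})$ with $\psi_i\equiv 1$ on $\Supp(\chi_i)$; applying Fourier inversion to the Schwartz--Bruhat function $\psi_i u_n$ rewrites the integral as
\[
\int\mathcal F(\psi_i u_n)(\zeta)\,B(\zeta,\lambda,\eta)\,d\zeta,
\]
where $B(\zeta,\lambda,\eta):=\int\chi_i(x)(\varphi\circ f)(x)\psi_K(\lambda f(x)\mid\eta-x\mid\zeta)\,dx$.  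I then split the $\zeta$-integration into the $\Lambda$-conical region $V_{x_i}$ (further restricted to $\ord\zeta$ sufficiently small) and its complement.  On the former, condition (2) of Definition~\ref{def:S'_Gamma-convergence} forces $\mathcal F(\psi_i u_n)(\zeta)=0$ uniformly in $n$; on the latter, the $x$-phase of $B$ has gradient $\lambda{}^tDf(x)\eta-\zeta$ whose norm stays bounded below since ${}^tDf(x)\eta\in V_{x_i}$ with $|{}^tDf(x)\eta|\geq\delta_{x_i}$, while either $\zeta\notin V_{x_i}$ or $\zeta$ is bounded with $|\lambda|$ large, so Proposition~\ref{prop:stat} applied to $B$ yields vanishing for $\ord\lambda$ sufficiently small, uniformly in $\zeta$ and $\eta\in V_{\eta_0}$.

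The main obstacle will be arranging this $\zeta$-splitting and the associated stationary-phase estimates so that all bounds remain uniform in the approximating index $n$.  Although each $u_n$ is Schwartz--Bruhat with empty wave front set individually, its radius of local constancy degrades as $n\to\infty$, so a direct application of Proposition~\ref{prop:stat} to the amplitude $u_n\chi_i(\varphi\circ f)$ would produce a bound on $\ord\lambda$ depending on $n$; the required uniformity has to come instead from the $\cS'_{\WF_\Lambda(u),\Lambda}$-convergence, channelled through the Fourier decomposition above.  This is the non-archimedean analogue of the H\"ormander--Duistermaat stationary-phase argument for push-forwards.
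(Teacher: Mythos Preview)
Your direct approach is in the spirit of H\"ormander's real argument and can in principle be completed, but the paper takes a different and shorter route following \cite{Gabor}: it factors $f=p\circ i$ with $i\colon x\mapsto(x,f(x))$ the embedding of $X$ into the graph $\Gamma_f$ and $p\colon\Gamma_f\to Y$ the second projection, then treats each factor separately.  For the embedding, after a change of chart (via Corollary~\ref{cor:invariancebydiffeo}) one has locally $i_*u=u\otimes\delta$, and $\WF_\Lambda(i_*u)=i_*(\WF_\Lambda u)$ is read off from Theorem~\ref{thm:WFtenseur}.  For the projection $\pi$ one observes $N_\pi=\emptyset$ and the identity $\psi_K(\pi(x_1,x_2)\mid\lambda\xi)=\psi_K((x_1,x_2)\mid\lambda(0,\xi))$, so $\mathcal F(\varphi\,\pi_*u)(\lambda\xi)$ is literally a localized Fourier transform of $u$ evaluated at a frequency of the special form $\lambda(0,\xi)$; micro-local smoothness of $u$ at every $((x,y_0),(\eta,\xi_0))$, with $x$ ranging over the compact fiber (properness) and $\eta$ over the compact $S_\Lambda^{(n)}$, plus a finite-cover argument, then gives the required vanishing directly---no stationary phase for a nonlinear phase and no approximation by Schwartz--Bruhat functions.

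Your method handles arbitrary $f$ in one pass, at the cost of precisely the uniformity problem you flag at the end; the factorization sidesteps it because the phase in the projection step is linear.  If you do pursue your line, two points deserve care.  First, condition~(2) of Definition~\ref{def:S'_Gamma-convergence} supplies its \emph{own} neighborhoods, not a priori the $U_{x_i},V_{x_i}$ you fixed from the $\Lambda$-smoothness of $u$; you must either use the explicit convolution sequence of Proposition~\ref{SB-approx-S':bis} (whose proof shows the neighborhoods can be inherited from those of $u$, after shrinking) or arrange this compatibility from the outset.  Second, invoking Proposition~\ref{prop:stat} on the phase $\lambda(f(x)\mid\eta)-(x\mid\zeta)$ requires the boundedness hypothesis on the Taylor remainder $R$, which for a merely strict $C^1$ map $f$ is not automatic; you will need to justify this or restrict attention to small enough charts where the strict $C^1$ estimate controls $R$ adequately.
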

\begin{proof}
	We follow \cite[Theorem 2]{Gabor}. By the definition in paragraph \ref{def:pushforward}, the push-forward operation is functorial.
	Hence, for $f:X\subset K^n\to Y\subset K^m$ a strict $C^1$ morphism between strict $C^1$ submanifolds,
	we can decompose $f$ as a composition $p\circ i$ where $i:X\to \Gamma_f$ is the imbedding from $X$ to the
	graph $\Gamma_f$ of $f$, mapping $x$ to $(x,f(x))$ and $p:\Gamma_f \to Y$ is the second projection to $Y$.
	By functoriality, we get $f_* = p_* \circ i_*$. It is then enough to prove the result for imbeddings and projections.\\

	$\bullet$ The imbedding case.
	The wave front set is defined locally and by \ref{cor:invariancebydiffeo} it is invariant by change of coordinates. It is then enough to prove the result in the case of
	$$
		i : \begin{cases} X  \to  X \times Y \\
		   x  \mapsto (x,0).
		  \end{cases}
	$$
	in that case, for any distribution $u$ in $\cS'(X)$, the push-forward $i_*u$ is equal to $u \otimes \delta$,
	where $\delta$ is the Dirac measure,
	$$i_*(\WF_\Lambda(u)) = \{(x,0),(\eta,\xi) \mid (x,\eta) \in \WF_u\} =
	\WF_\Lambda u \times \left(\{0\}\times K^m\right).$$
	and using the theorem \ref{thm:WFtenseur} and the definition of a $\Lambda$-micro-locall-smooth point we obtain the equality
	$\WF_\Lambda(i_*u) = i_*(\WF_\Lambda u).$

	$\bullet$ The projection case. Denote by $\pi$ the canonical projection from $X\times Y$ to $Y$.
	Let $u$ be a distribution in $\cS'(X\times Y)$ and assume the restriction of $\pi$ to the support of $u$ is proper. Let's prove the inclusion
	$$\WF_\Lambda(\pi_* u) \subset \pi_*(\WF_\Lambda u) \cup N\pi.$$
	Remark that in that case $N_\pi=\emptyset$
	and
	$$\pi_*(\WF_\Lambda u) = \{(y,\xi)\in T^*Y \setminus \{0\} \mid \exists (x,\eta) \in T^*X,
        \:((x,y),(\eta,\xi))\in \WF_\Lambda u\}.$$
	Consider $(y_0,\xi_0)$ not in $\pi_*(\WF_\Lambda u)$ with $\xi_0$ in $S_{\Lambda}^{(m)}$.
	Then, for any $(x,\eta)$ in $T^*X$ with $\eta$ in $S_\Lambda^{(n)}$,
	the point $((x,y_0),(\eta, \xi_0))$ does not belong to $\WF_\Lambda(u)$, and there are two neighborhoods
	$U_{(x,y_0),(\eta,\xi_0)}$ of $(x,y_0)$ and $\check U_{(x,y_0),(\eta, \xi_0)}$ of $(\eta,\xi_0)$ such that for all $\phi$
	in $\cS(U_{(x,y_0),(\eta,\xi_0)})$, there is an integer $N_{\phi,(x,y_0),(\eta,\xi_0)}$ such that for all $\lambda$ in $\Lambda$ with
	$\ord \lambda \leq N_{\phi,(x,y_0),(\eta,\xi_0)}$, and $(\eta',\xi')$ in $\check U_{(x,y_0),(\eta,\xi_0)}$,
	\begin{equation} \label{formuleeta'}
	\mathcal F(\phi u)(\lambda(\eta',\xi'))=0.
        \end{equation}
	We consider points $((x,y_0),(\eta, \xi_0))$ with $U_{(x,y_0),(\eta,\xi_0)}$ included in the support of $u$.
	By properness assumption of $\pi$ on the support of $u$, and compactness of the product
	$S_\Lambda^{(n)}\times S_{\Lambda}^{(m)}$, there are finitely many $(x_i,\eta_i)_{i\in I}$ with
	$$\bigcup_{x,\eta} U_{(x,y_0),(\eta,\xi_0)} = \bigcup_{i\in I} U_{(x_i,y_0),(\eta_i,\xi_0)}.$$
	We consider the following open neighborhoods of $y_0$ and $\xi_0$
	$$\Omega_{y_0,\xi_0} = \bigcap_{i \in I} \pi\left(U_{(x_i,y_0),(\eta_i,\xi_0)}  \right) \: \text{and} \:
	\check \Omega_{y_0,\xi_0} = \bigcap_{i \in I} \pi\left(\check U_{(x_i,y_0),(\eta_i,\xi_0)}  \right)
	$$
	We can assume $\Omega_{y_0,\xi_0}$ included in the support of $u$.
	Let $\varphi$ be in $\cS(\Omega_{y_0,\xi_0})$. By definition of $\pi$, the composition $\phi=\varphi \circ \pi$ belongs to the intersection $\bigcap_{i \in I} \cS(U_{(x_i,y_0),(\eta_i,\xi_0)})$.
	For any $\lambda$ in $\Lambda$ with
	$\ord \lambda \leq \max(N_{\phi,(x_i,y_0),(\eta_i,\xi_0)})$, for any $\xi'$ in $\check \Omega_{y_0,\xi_0}$, applying \ref{formuleeta'} for any $\eta_i$, with $i$ in $I$,
	%$$\mathcal F(\phi u)(\lambda(\eta_i,\xi')) =  <u,(x,y)\mapsto \phi(x,y) \Psi_K(x\mid \lambda \eta_i)
	%\Psi_K(y \mid \lambda \xi')> = 0.$$
	%Then,
	we obtain
	$$ \mathcal F(\varphi \pi_* u)(\lambda \xi') = 0 $$
	%$$
	%\begin{array}{ccc}
	%\mathcal F(\varphi \pi_* u)(\lambda \xi') & = &  <u,(x,y)\mapsto \phi(x,y) \Psi_K(\pi(x,y) \mid \lambda \xi')> \\
	%& = & <u,(x,y)\mapsto \varphi(y)\Psi_K(\lambda y \mid \xi')> = 0.
        %\end{array}
	%$$
	and the point $(y_0,\xi_0)$ does not belong to $\WF_\Lambda(\pi_*u)$.
\end{proof}

\begin{example}\label{ex:topology}
In this example we show that the finer topology is needed in order to get continuity of $f^*$ in Theorem \ref{thm:pull}, even already on the subspace $\cS(Y)\subset \cS'_{\Gamma,\Lambda}(Y)$ for $Y=K$.
For each $r\geq 0$, let $v_r$ be the distribution on $Y$ with $Y=K$ given by integration against the Schwartz-Bruhat function
$$
\varphi_r(x) = q_K^{r/2} \11_{B_r(0)}(x),
$$
where $B_r(0)$ is the ball around $0$ of valuative radius $r$. Thus, $v_r(\psi)= \int_K \varphi_r(x) \psi(x)|dx|$ for any Schwartz-Bruhat $\psi$ on $Y$.

Note that the sequence $(v_r)_r$ has a limit which is the zero distribution, since for any Schwartz-Bruhat $\psi$ on $K$ one has
$$
\left| \int_{x\in K} \psi(x) \varphi_r(x) |dx| \right|_\CC \leq  c_\psi q_K^{-r+r/2} \to 0\ \mbox{ for } r\to +\infty,
$$
with $c_\psi= \int_K |\psi(x)|_\CC |dx|$.

Now put $X=K$ and $Y=K$ and let $f:X\to Y$ be the constant function to $0$. Then, the conditions of Theorem \ref{thm:pull} are satisfied to take $f^{*}(v_r)$ for each $r$, with $\Gamma$ any closed cone disjoint from $N_f=\{0\}\times K^\times$.
However, the sequence $w_r:= f^{*}(v_r)$ does not have a limit in $\cS'(K)$, since
$$
w_r (\11_{B_0(0)}) := \int_{x\in K}  \11_{B_0(0)}(x) (\varphi_r\circ f)(x) |dx| =
       \int_{x\in B_0(0)}  q_K^{r/2}  |dx| = q_K^{r/2} \to+\infty.
$$
Of course, the sequence $(v_r)_r$ is not converging in the $\cS'_{\Gamma,\Lambda}$-sense, since it fails condition (\ref{def:S'_Gamma-convergence-Fourier}) of Definition \ref{def:S'_Gamma-convergence}.

The example thus shows that Heifetz construction from \cite{Hei85a} is not continuous with the subset topology on $\cS'_{\Gamma,\Lambda}(X)$ induced from $\cS'(X)$.  Thus, the finer topology on $\cS'_{\Gamma,\Lambda}(X)$, as specified in Definition \ref{def:S'_Gamma-convergence} (similar to the real case by H\"ormander \cite{Hormander83}) is necessary. Heifetz omits to specify this finer topology in \cite{Hei85a}.
\end{example}

We finish this section by the version in our context of the classical result on the product of distributions (see \cite[Theorem 8.2.10]{Hormander83}).
	\begin{thm}
		Let $X$ be a $C^1$ strict submanifold of $K^n$ for some $n\geq 0$. Let $u$ and $v$ be distributions in $\cS'(X)$.
		Then, the product $uv$ can be defined as the pull-back of the tensor product $u\otimes v$ by the diagonal map
		$\delta : X \to X \times X$ unless $(x,\xi) \in \WF_\Lambda(u)$ and $(x,-\xi)\in \WF_{\Lambda}(v)$ for some $(x,\xi)$ in $T^*X\setminus \{0\}$. When the product is defined we have
		$$
\WF^{0}_{\Lambda}(uv)=\{(x,\xi+\eta) \mid (x,\xi) \in \WF^{0}_\Lambda(u),\ (x,\eta) \in \WF^{0}_\Lambda(v)\},$$
with notation from just above Theorem \ref{thm:WFtenseur}.
	\end{thm}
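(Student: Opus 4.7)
The plan is to define $uv := \delta^*(u \otimes v)$ where $\delta : X \to X \times X$ is the diagonal embedding $x \mapsto (x,x)$, and then to apply the pull-back Theorem \ref{thm:pull} together with the tensor product bound of Theorem \ref{thm:WFtenseur} to obtain the wave front set estimate. First, I would verify that $\delta$ is a strict $C^1$ morphism (trivial, being linear), compute $D\delta(x)(v) = (v,v)$, and hence identify the dual map $^tD\delta(x) : T^*_{(x,x)}(X \times X) \to T^*_x X$ as the addition map $(\xi, \eta) \mapsto \xi + \eta$. Consequently
$$
N_\delta \;=\; \{((x,x),(\xi,\eta)) \in T^*(X \times X)\setminus \{0\} \mid \xi + \eta = 0 \} \;=\; \{((x,x),(\xi,-\xi)) \mid \xi \in K^n\setminus\{0\}\}.
$$

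Next I would set $\Gamma$ to be the closed $\Lambda$-cone obtained as the intersection of $\WF^0_\Lambda(u) \times \WF^0_\Lambda(v)$ with $T^*(X \times X) \setminus \{0\}$; this contains $\WF_\Lambda(u \otimes v)$ by Theorem \ref{thm:WFtenseur}. The hypothesis of the theorem is precisely that there is no $(x,\xi) \in T^*X \setminus \{0\}$ with $(x,\xi) \in \WF_\Lambda(u)$ and $(x,-\xi) \in \WF_\Lambda(v)$, which (together with the fact that $\xi \ne 0$ forces both $\xi$ and $-\xi$ to avoid the zero section) translates exactly into $N_\delta \cap \Gamma = \emptyset$. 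Thus Theorem \ref{thm:pull} applies and produces the pull-back $uv = \delta^*(u \otimes v) \in \cS'_{\delta^*\Gamma,\Lambda}(X)$, giving the inclusion $\WF_\Lambda(uv) \subset \delta^*\Gamma$. Unwinding the definition of $\delta^*\Gamma$ with $^tD\delta(x)(\xi,\eta) = \xi+\eta$ yields
$$
\WF_\Lambda(uv) \;\subset\; \{(x,\xi+\eta) \mid (x,\xi) \in \WF^0_\Lambda(u),\ (x,\eta) \in \WF^0_\Lambda(v),\ \xi + \eta \neq 0\},
$$
and adjoining the zero section on both sides gives the inclusion $\subset$ of the stated formula.

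The main obstacle is the reverse inclusion $\supset$ in the displayed equality for $\WF^0_\Lambda(uv)$. For the zero section the inclusion is automatic, so the issue is to realise each candidate covector $(x,\xi+\eta) \ne 0$ with $(x,\xi) \in \WF^0_\Lambda(u)$ and $(x,\eta) \in \WF^0_\Lambda(v)$ as a singular direction of $uv$. I would attempt this by the same strategy as in the proof of Theorem \ref{thm:pull}: approximate $u$ and $v$ by Schwartz--Bruhat sequences $u_j$, $v_j$ using Proposition \ref{SB-approx-S':bis} chosen to converge in the $\cS'_{\Gamma,\Lambda}$-sense, so that $u_j v_j \to uv$ in the refined topology. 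Computing $\cF(\varphi (u_j v_j))$ for a localising $\varphi$ as a convolution $\cF(\varphi u_j) * \cF(\varphi v_j)$ and using the uniform support statements established in Proposition \ref{prop:uniform-convergence} and in Step 4 of the proof of Theorem \ref{thm:pull}, one would trace non-vanishing directions of the convolution back to non-vanishing directions of the factors lying in $\WF_\Lambda(u) \cup \{0\}$ and $\WF_\Lambda(v) \cup \{0\}$, respectively.

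Along the way I would flag one technical subtlety: strictly speaking the product $uv$ is only defined after a choice of local chart, so I would invoke Corollary \ref{cor:invariancebydiffeo} to check the construction is independent of this choice (equivalently, that the diagonal embedding transforms functorially under strict $C^1$ isomorphisms of $X$), so that the statement makes sense on arbitrary strict $C^1$ submanifolds $X$ and the wave front set formula is coordinate-free.
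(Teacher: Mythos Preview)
Your argument for defining $uv=\delta^*(u\otimes v)$ and for the inclusion $\WF^0_\Lambda(uv)\subset\{(x,\xi+\eta)\mid\ldots\}$ is exactly the paper's approach; the paper's entire proof is the single sentence ``The result follows from Theorems \ref{thm:WFtenseur} and \ref{thm:pull} with $f=\delta$.'' The computations of $^tD\delta(x)(\xi,\eta)=\xi+\eta$, of $N_\delta$, and the translation of the hypothesis into $N_\delta\cap\Gamma=\emptyset$ are just the unwinding that the paper leaves implicit.

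Where you go astray is in treating the displayed ``$=$'' as a genuine equality and then investing effort into the reverse inclusion. Both inputs---Theorem \ref{thm:WFtenseur} (an inclusion) and Theorem \ref{thm:pull} (which only gives $\WF_\Lambda(f^*u)\subset f^*\Gamma$)---are one-sided, so their combination can yield only ``$\subset$''; this is also what the classical H\"ormander result (Theorem 8.2.10 in \cite{Hormander83}, to which the paper explicitly refers) states. The reverse inclusion is in fact false in general: if $u$ is a smooth function vanishing near a point in $\Sing(v)$ (or simply $u=0$), then $uv$ is smooth there while the right-hand side still contains $\WF^0_\Lambda(v)$ over that point. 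So your approximation-and-convolution strategy for ``$\supset$'' is chasing a statement that does not hold, and that part of the proposal should be dropped. Your remark on chart-independence via Corollary \ref{cor:invariancebydiffeo} is reasonable but is not part of the paper's proof.
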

\begin{proof} The result follows from Theorems \ref{thm:WFtenseur} and \ref{thm:pull} with $f=\delta$.
\end{proof}

\section{Distributions of $\cCexp$-class and wave front sets}\label{sec:distCexpclass}

\subsection{}

In this section we introduce a class of distributions given by uniform, field-independent descriptions, called distributions of $\cCexp$-class (see the definition in \ref{Dis}). These distributions are not only uniform, they also have some geometric properties that more arbitrary distributions may lack. We will study the wave front sets associated to distributions of $\cCexp$-class and we will show that such a wave front set is not definable in general (see Example \ref{ex:smooth}), but still it is always the complement of a zero locus of a function of $\cCexp$-class (see Theorem \ref{Smo}), yielding some results on the recent notion of \blue{on $\WF$-holonomicity from \cite{AizDr}} in Section \ref{sec:WF}. Similarly, conditions (on family parameters) related to e.g.~pull-backs are shown to be zero loci of functions of $\cCexp$-class, and the class of $\cCexp$-class distributions is stable under pull-backs (see Theorem \ref{Hei2}). Also, the Fourier transform of distributions of $\cCexp$-class remains of $\cCexp$-class (see Theorem \ref{Fourier:p}).

\subsection{}

From now on, until the end of the paper, we use terminology and notation from Section 1.2 of \cite{CGH5},  without recalling that section in full. In particular this fixes the notions of functions, loci, and conditions of $\cCexp$-class, as well as of definable sets. Thus, $\Loc$ is the collection of all pairs $(F,\varpi)$ of non-archimedean local fields $F$ with
a uniformizer $\varpi$ of the valuation ring $\cO_F$ of $F$. % and with an additive character $\psi_F:F\to \CC$ which is trivial on $\cM_F$ but nontrivial on $\cO_F$.
Further, given an integer $M$, $\Loc_M$ is the collection of $(F,\varpi) \in \Loc$ such that
$F$ has characteristic either $0$ or at least $M$.
Given $F=(\underline F,\varpi) \in \Loc$, we write $\VF_F$ (or, by abuse of notation, just $F$) for the valued field $\underline F$, we write $\cO_F$ for the valuation ring of $\VF_F$, $\cM_F$ for the maximal ideal, $\RF_F$ for the residue field and $q_F$ for
the number of elements of $\RF_F$. The value group (even though always being isomorphic to $\ZZ$) will sometimes be denoted by $\VG_F$, and,
for positive integers $n$, we write $\RF_{n,F}$  for the quotient $\cO_F/n\cM_F$ and $\ord_F\colon \VF_F \to \VG_F \cup \{\infty\}$ for the valuation map. Apart from the natural operations like the ring operations and the valuation map, note that definable sets are built up with a generalized Denef-Pas language $\gLPas$ involving also angular
component maps $\ac_n:\VF\to \RF_n$ for each integer $n>0$.

We write $\cD_F$ for the subset of the group of additive characters $\psi$ on $F$, such that $\psi(\cM_F)=1$ and $\psi(\cO_F)\not=1$, with $\cM_F$ the maximal ideal of $\cO_F$.
We introduce one handy extra abbreviation on top of the notation of \cite[Section 1.2]{CGH5}: By $\Loc'$ we mean the collection of $F=(\underline F,\varpi,\psi)$ with $(\underline F,\varpi)$ in $\Loc$ and $\psi\in \cD_{F}$. We use the obvious variants as for $\Loc$ in \cite{CGH5}, like $\Loc'_M$ (to denote that the characteristic of the local field is $0$ or at least $M$), and $\Loc'_{\gg 1}$ (to denote that the characteristic of the local field is $0$ or at least $M$ for some $M$ depending on the context).

For arbitrary sets $A\subset X\times T$ and $x\in X$, write $A_x$ for the set of $t\in T$ with $(x,t)\in A$.
For $g:A\subset X\times T\to B$ a function
and for $x\in X$, we write $g(x,\cdot)$ or $g_x$ for the function $A_x\to B$ sending $t$  to $g(x,t)$.

\subsection{Distributions of $\cCexp$-class}

\begin{def-prop}[Distributions of $\cCexp$-class]\label{Dis} Consider definable sets $Y$ and $W\subset Y\times \VF^n$ for some $n\geq 0$. Let $E$ be in $\cCexp(W\times \ZZ)$.
For each $F\in \Loc'_{\gg 1}$, consider the set
$$
{\rm {Dis}}(E,Y)_F:= \{y\in Y_F\mid W_{F,y} \mbox{ is a strict $C^1$ submanifold of $F^n$, and, }
$$
$$
E_F(y,\cdot) : W_{F,y}\times \ZZ\to \CC \mbox{ is a $B$-function}\}.
$$
 Then ${\rm {Dis}}(E,Y)$ is a $\cCexp$-locus. In particular, there is $M>1$ such that for $F\in\Loc'_M$ and $y\in {\rm {Dis}}(E,Y)_F$, the function $E_F(y,\cdot)$ is the $B$-function of a distribution $u_{F,y}$ on $W_{F,y}$. The collection of distributions $u_{F,y}$, for $F\in\Loc'_M$ and $y\in {\rm {Dis}}(E,Y)_F$, is called a $\cCexp$-class distribution.
\end{def-prop}
\begin{proof}
The condition that $W_{F,y}$ is a strict $C^1$ submanifold of $F^n$ is clearly a definable condition, and hence, certainly a $\cCexp$-condition.
Since a finite intersection of $\cCexp$-loci is a $\cCexp$-locus, we can proceed as follows.
First we show that the collection, for $F\in \Loc'_{\gg 1}$, of the sets
$$
{\rm {Ba}}(E,Y)_F:= \{y\in Y\mid W_{F,y} \mbox{ is a strict $C^1$ manifold, and,}
$$
$$
E_F(y,\cdot):W_{F,y}\times \ZZ \to \CC \mbox{ is a function on  balls}\}
$$
is a $\cCexp$-locus.

That $E_F(y,\cdot)$ is a function on balls can be expressed as
\[\forall x, x' \in W_{F,y}:\forall r \in \ZZ:\]
\[ \left((B_r(x)\cap W_y=B_r(x')\cap W_y)\quad \Rightarrow\quad E_F(y,x,r) = E_F(y,x',r) \right);\]
by the formalism explained after \cite[Proposition 1.3.1]{CGH5}, this is a $\cCexp$-locus, and hence so is ${\rm {Ba}}(E,Y)$.
In more detail:
Consider the function $E_2$
$$
E_2:Y\times \VF^{2n}\times \ZZ \to \ZZ
$$
sending $(y,x,x',r)$ to $E(y,x,r) - E(y,x',r)$ whenever $x,x'$ lie in $W_y$ and moreover $B_r(x)\cap W_y=B_r(x')\cap W_y$
%is compact
and $W_{F,y}$ is a strict $C^1$ manifold, and to $0$ otherwise. Then clearly $E_2$ lies in $\cCexp(Y\times \VF^{2n}\times \ZZ)$ and
$$
{\rm {Ba}}(E,Y) = \{y\in Y | W_{F,y}  \mbox{ is a strict $C^1$ manifold, and,}
$$
$$
\forall x\forall x' \forall r\ E_2(y,x,x',r) =0\},
$$
which is a $\cCexp$-locus by Proposition 1.3.1(3) of \cite{CGH5}. % , say, equal to the zero locus of $f_{\rm { Ba}}\in \cCexp(Y)$.

Finally we show that ${\rm {Dis}}(E,Y)$ is a $\cCexp$-locus. For a function on balls, in order for (\ref{add}) to hold in general it is enough to check (\ref{add}) for an arbitrary ball $B$ in $\VF^n$, say, of valuative radius $r$, and for $B_i$ disjoint balls in $B$ of valuative radius $r+1$ whose union equals $B$. By Theorem 4.1.1 of \cite{CHallp}, there are functions $H$ and $G$ in $\cCexp(Y\times \VF^n\times \ZZ)$ such that
\begin{equation}\label{intE}
H(y,x,r) = \int_{x'\in B_r(x)\cap W_y}  q^{\blue{(r+1)\dim W_y}} E(y,x',r+1) \blue{\mu_{W_y}}
\end{equation}
whenever \blue{$W_y$ is a  strict $C^1$ manifold and the integral is finite, and such that
$G(y,x,r)=0$ if and only if these conditions hold (namely, the integral in (\ref{intE}) is finite and $W_y$ is a  strict $C^1$ manifold). Here, $\mu_{W_y}$ stands for the canonical measure on $W_y$ as in Section \ref{def:pushforward},} and, $q$ stands for the $\cCexp$-function which associates to $F$ the number of residue field elements $q_F$, with notation as in \cite{CGH5}.
Now let $E_3$ be the product of $H - E$ with the characteristic function of tuples $(y,x,r)$ such that $W_y\cap B_r(x)$ is compact; note that $W_y\cap B_r(x)$ being compact is a definable condition.  (Here, we have extended $E$ by zero outside its original domain.)
By construction and lemma \ref{lem:add}, $y\in Y$ lies in ${\rm {Dis}}(E,Y)$ if and only if, jointly, $y$ lies in ${\rm {Ba}}(E,Y)$ and, for all $x\in \VF^n$ and all $r\in \ZZ$, $G(y,u,r)=0$ and $E_3(y,u,r)=0$. It now follows from Proposition 1.3.1 of \cite{CGH5} that ${\rm {Dis}}(E,Y)$ is a $\cCexp$-locus.
\end{proof}

Note that a distribution of $\cCexp$-class is in fact, by its definition in \ref{Dis}, a family of distributions \blue{whose (family of) $p$-adic continuous wavelet transforms is of $\cCexp$-class.}

%\begin{defn}[$\cCexp$-class distributions]
%Let $Y$, $W$, $E$, and ${\rm {Dis}}(E,Y)$ be as in Proposition \ref{Dis}. By proposition \ref{Dis} there is $M>0$ such that for all $F\in\Loc'_M$ and all $y\in {\rm {Dis}}(E,Y)_F$,    %the function $E_F(y,\cdot)$ defines a distribution $u_{F,y}$ on $W_{F,y}$. The collection $u_{F,y}$, for $F\in\Loc'_M$ and $y\in {\rm {Dis}}(E,Y)_F$, is called a $\cCexp$-class %distribution.
%\end{defn}

\begin{prop}[Families of Schwartz-Bruhat functions of $\cCexp$-class]\label{alpha+-:p}
Consider definable sets $Y$ and $W\subset Y\times \VF^n$ for some $n\geq 0$.
%Suppose that $W_{F,y}$ is a strict $C^1$ submanifold of $\VF_F^n$ for each $F$ in $\Loc$ and each $y\in Y_F$.
Let $\varphi$ be in $\cCexp(W)$. Then, %there is $M>0$ such that
the collection for $F\in \Loc'_{\gg 1}$ of the sets
$$
{\rm {Sch}}(\varphi,Y)_F := \{y\in Y_F\mid W_{F,y} \mbox{ is a strict $C^1$ manifold, and, }
$$
$$
\varphi_F(y,\cdot) \mbox{ is a Schwartz-Bruhat function on } W_{F,y}\}
$$
is a $\cCexp$-locus.
Furthermore, there are definable functions $\alpha^+$ and $\alpha^-$ from $Y$ to $\ZZ$ and $M>0$ with the following properties for any $F\in\Loc'_M$ and any $y\in  {\rm {Sch}}(\varphi,Y)_F$.
The function $\varphi_F(y,\cdot)$ is constant on any set of the form $W_{y}\cap B$ with $B\subset F^n$ a ball of valuative radius $\alpha^+_F(y)$.
Furthermore, $\varphi_F(y,\cdot)$ is supported on the ball around zero of valuative radius $\alpha^-_F(y)$.
\end{prop}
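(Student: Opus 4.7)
The plan is to mirror the proof of Proposition-Definition \ref{Dis}, decomposing the Schwartz-Bruhat property and invoking the $\cCexp$-locus formalism of \cite[Proposition 1.3.1]{CGH5}. I would begin by noting that $\varphi_F(y,\cdot)$ is Schwartz-Bruhat on $W_{F,y}$ if and only if jointly (i) $W_{F,y}$ is a strict $C^1$ submanifold of $F^n$, (ii) $\varphi_F(y,\cdot)$ is locally constant on $W_{F,y}$, and (iii) $\varphi_F(y,\cdot)$ has compact support on $W_{F,y}$. Since a finite intersection of $\cCexp$-loci is a $\cCexp$-locus, it suffices to check each condition separately; condition (i) is plainly definable.

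For (ii), local constancy unfolds to the first-order statement
\[
\forall x \in W_y,\ \exists r \in \ZZ,\ \forall x' \in W_y \text{ with } \ord(x-x') \geq r,\ \varphi_F(y,x) = \varphi_F(y,x'),
\]
and an application of the quantifier-handling rules spelled out after \cite[Proposition 1.3.1]{CGH5} to the $\cCexp$-function $(y,x,x') \mapsto \varphi(y,x) - \varphi(y,x')$ (extended by zero outside $W\times_Y W$) produces the desired $\cCexp$-locus. For (iii), I would use that compact support of a locally constant function on a strict $C^1$ manifold is equivalent to the existence of integers $r_1 \leq r_2$ satisfying: $\varphi_F(y,x)=0$ whenever $\ord x < r_1$; the function is constant on each ball $B_{r_2}(x) \cap W_y$; and for every $x \in W_y$ with $\ord x \geq r_1$ the intersection $B_{r_2}(x) \cap W_y$ is compact. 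The compactness clause is expressible as a definable condition via the chart characterization of Remark \ref{rem:isometry}, so (iii) is again a $\cCexp$-locus, and their intersection is $\mathrm{Sch}(\varphi,Y)$.

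For the definable functions $\alpha^{\pm}$, I would set
\[
\alpha^-(y) = \min\{\ord x : x \in \Supp(\varphi_F(y,\cdot))\}, \quad \alpha^+(y) = \min\{r \in \ZZ : \varphi_F(y,\cdot) \text{ is constant on each } B_r(x)\cap W_y\}.
\]
On $\mathrm{Sch}(\varphi,Y)$ both sets are uniformly definable subsets of $\ZZ$ in the family parameter $y$, non-empty by Schwartz-Bruhatness, and each admits its displayed extremum: the first by compactness (and hence boundedness) of the support, the second by combining compactness of the support with local constancy. By the Presburger nature of the value group sort in $\gLPas$, the minimum of a uniformly definable non-empty bounded-below family of subsets of $\ZZ$ is itself a definable function of the parameters, yielding $\alpha^{\pm}$ with the stated constancy and support properties.

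The main obstacle I foresee is the careful definable formulation of compact support in (iii), in particular giving a uniform first-order description of compactness of $B_{r_2}(x) \cap W_y$ for strict $C^1$ submanifolds $W_y$. Once this is in place via Remark \ref{rem:isometry}, everything else reduces to routine quantifier manipulation within the $\cCexp$-locus framework and to standard Presburger minimization over $\ZZ$.
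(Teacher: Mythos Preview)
Your overall decomposition into (i)--(iii) matches the paper, and the treatment of (i) is fine. There is, however, a genuine gap in how you obtain the definable functions $\alpha^{\pm}$, and a related imprecision in handling the existential quantifiers in (ii) and (iii).

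The key problem is your claim that the sets
\[
\{\ord x : x \in \Supp(\varphi_F(y,\cdot))\}\quad\text{and}\quad\{r \in \ZZ : \varphi_F(y,\cdot)\text{ is constant on each }B_r(x)\cap W_y\}
\]
are \emph{uniformly definable} subsets of $\ZZ$ in the parameter $y$. They are not. The support of a $\cCexp$-function is the complement of a $\cCexp$-locus, and such complements are in general neither definable nor $\cCexp$-loci (cf.\ the discussion around Example \ref{ex:smooth}). Likewise, the constancy condition on $r$ is a $\cCexp$-locus condition in $(y,r)$, not a definable one. Presburger minimization therefore does not apply, and you cannot extract definable $\alpha^{\pm}$ this way. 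The paper instead invokes \cite[Corollary 1.4.3]{CGH5} to get the local-constancy locus and \cite[Proposition 1.4.1]{CGH5}, which is precisely the tool that produces a \emph{definable witness} for an ``eventually zero'' or ``eventually constant'' statement about a $\cCexp$-function; this yields $\alpha^+$ directly, and then $\alpha^-$ as the witness for the $\forall\lambda\ll 0$ clause.

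A secondary point: the $\cCexp$-locus formalism of \cite[Proposition 1.3.1]{CGH5} closes under universal quantifiers, not arbitrary existential ones. Your $\exists r$ in (ii) and $\exists r_1, r_2$ in (iii) happen to be harmless because the conditions are monotone in these variables, so each $\exists$ is equivalent to a ``for all sufficiently large/small'' quantifier, which \emph{is} handled by \cite[Proposition 1.4.1]{CGH5}. But you should make this monotonicity explicit rather than citing the formalism as if it eliminates $\exists$ in general.
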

\begin{proof}
As before, we note that the condition that $W_{F,y}$ is a strict $C^1$ submanifold of $\VF_F^n$ is a definable condition, and hence, a $\cCexp$-condition. By Corollary 1.4.3 of \cite{CGH5}, the condition that $\varphi_F(y,\cdot)$ is locally constant on $W_{F,y}$ is a $\cCexp$-locus, and Proposition 1.4.1 of \cite{CGH5} yields the existence of a definable map $\alpha^+$ with properties as desired, namely, such that if $\varphi_F(y, \cdot)$ is Schwartz-Bruhat, then $\varphi_F(y, \cdot)$ is constant on any set of the form $W_{y}\cap B$ with $B\subset F^n$ a ball of valuative radius $\alpha^+_F(y)$.

Finally, $\varphi_F(y,\cdot)$ is Schwartz-Bruhat if and only if the following conditions hold: The function $\varphi_F(y,\cdot)$ is constant on $W_{F,y}$ intersected with any ball of valuative radius $\alpha^+_F(y)$ and moreover
\[
\forall \lambda \ll 0: \forall x \in W_{F,y} \setminus B_\lambda(0): \varphi_F(y,x) = 0
\]
\[\wedge
\]
\[
\forall x \in W_{F,y}:\left( (B_{\alpha^+_F(y)}(x) \cap W_{F,y}\text{ is not compact}) \Rightarrow \varphi_F(y,x) = 0\right)
\]
(here, $\lambda$ runs over all sufficiently small integers). This is a $\cCexp$-locus by the formalism
explained after  \cite[Proposition 1.3.1]{CGH5} and using \cite[Proposition 1.4.1]{CGH5} to treat ``$\forall \lambda \ll 0$''.
Also, \cite[Proposition 1.4.1]{CGH5} yields a ``definable witness for $\lambda$'', i.e., the desired function $\alpha^-_F(y)$.
\end{proof}

\begin{lem}[Convolutions]\label{lem:conv}
Consider a definable set $Y$ and some $n\geq 0$.
Let $\varphi$ and $\psi$ be in $\cCexp(Y\times \VF^n)$.
Then there are $M>0$ and $E$ in $\cCexp(Y\times \VF^n)$ such that, for any $F\in\Loc'_M$, any $y\in Y_F$ and any $x\in \VF_F^n$,
$$
E_{F}(y,x) = \int_{z\in F^n} \varphi(y,x-z)\psi(y,z)|dz|
$$
whenever this integral is finite.
\end{lem}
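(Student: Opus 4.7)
The plan is to realize the convolution as an integration of a single $\cCexp$-class function along the fiber direction $z$, and then to invoke the stability of $\cCexp$ under parameter-integration from \cite[Theorem 4.1.1]{CHallp}. The parameter space of the integral will be the full $Y\times \VF^n$, with $y$ playing its usual role and $x$ absorbed as an additional $\VF$-parameter.

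First I would introduce the auxiliary space $Z := Y\times \VF^n\times \VF^n$ with coordinates $(y,x,z)$, together with the two definable maps
$$
p\colon Z \to Y\times\VF^n,\quad (y,x,z)\mapsto (y,x-z),
\qquad
q\colon Z \to Y\times\VF^n,\quad (y,x,z)\mapsto (y,z).
$$
Pullback along definable maps and pointwise multiplication both preserve $\cCexp$-class (see \cite[Section 1.2]{CGH5}). Hence $\tilde\varphi := \varphi\circ p$ and $\tilde\psi := \psi\circ q$ belong to $\cCexp(Z)$, and so does their product
$$
h := \tilde\varphi\cdot\tilde\psi \in \cCexp(Z),
\qquad
h_F(y,x,z) = \varphi_F(y,x-z)\,\psi_F(y,z).
$$

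Next I would apply \cite[Theorem 4.1.1]{CHallp} to integrate $h$ with respect to $z\in \VF^n$, regarding $(y,x)\in Y\times \VF^n$ as the parameter. This produces an $M>0$ and an $E\in \cCexp(Y\times \VF^n)$ together with an auxiliary $G \in \cCexp(Y\times\VF^n)$ whose zero locus describes exactly the parameters for which the integral is absolutely convergent, such that for every $F\in\Loc'_M$, every $y\in Y_F$ and every $x\in \VF_F^n$,
$$
E_F(y,x) \;=\; \int_{z\in F^n} h_F(y,x,z)\,|dz| \;=\; \int_{z\in F^n}\varphi_F(y,x-z)\,\psi_F(y,z)\,|dz|
$$
whenever the integral is finite. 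This is the desired statement; the $|dz|$ here is the Haar measure normalized so that $\cO_F^n$ has measure $1$, matching the convention used in \cite[Theorem 4.1.1]{CHallp}.

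There is no serious obstacle; the argument is essentially a bookkeeping exercise in the calculus of $\cCexp$-class functions. The only point that merits care is to make sure that in the invocation of \cite[Theorem 4.1.1]{CHallp} the role of ``parameter'' can be played by an arbitrary definable set, here $Y\times \VF^n$, rather than by a purely residue-field or value-group parameter. This is indeed part of the setup of that theorem, so the additional $\VF^n$-parameter $x$ is handled on the same footing as $y$, and the resulting $E$ lives in $\cCexp(Y\times\VF^n)$ as required.
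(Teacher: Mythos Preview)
Your proof is correct and follows exactly the same approach as the paper: the paper's proof is the single sentence ``This follows directly from the stability under integration given by Theorem 4.1.1 of \cite{CHallp},'' and your write-up simply unpacks this by exhibiting the integrand $h(y,x,z)=\varphi(y,x-z)\psi(y,z)$ as a $\cCexp$-class function and then applying that theorem.
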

\begin{proof}
This follows directly from the stability under integration given by Theorem 4.1.1 of \cite{CHallp}.
\end{proof}

%\begin{defn}\label{def-dis}Let $M>0$ and $n>0$ be integers. Let $U$ be a definable subset of $\VF^n$.
%Suppose that for each field $F$ in $\Loc_{M}$ that $U_F$ is a strict $C^1$ submanifold of $\VF_F^n$ and that a distribution $u_F$ on $U_F$ is given. Then the collection $(u_F)_{F\in \Loc_M}$ is called a distribution of $\cCexp$-class if the collection consisting of the $B$-functions $D_{u_{F}}$ for $F\in \Loc_{M}$ lies in $\cCexp(U\times \ZZ)$.
%%More generally, let $T$ be a definable set. A collection $(u_t)_t\in T$ of distributions on $U$ is called a family of definable distributions if the family $D_t$ of $B$-functions is a family of definable functions.
%\end{defn}

The evaluation of distributions of $\cCexp$-class in families of Schwartz-Bruhat functions of $\cCexp$-class is again of $\cCexp$-class, as follows.

\begin{prop}\label{Julia'sdef:p}
Consider definable sets $Y$ and $W\subset Y\times \VF^n$ for some $n\geq 0$. Let $E$ be in $\cCexp(W\times \ZZ)$ and let $\varphi$ be in \blue{$\cCexp(W)$}.
Then there are $M>0$ and a function $J$ in $\cCexp(Y)$ such that for each $F$ in $\Loc'_M$ and each $y\in Y_F$ one has
$$
J_F(y) = u_{F,y} (\varphi_{F,y})
$$
whenever $W_{F,y}$ is a strict $C^1$ manifold, $E_{F,y}$ is a $B$-function on $W_{F,y}$ of a distribution denoted by $u_{F,y}$, and $\varphi_{F}(y,\cdot)$ is a Schwartz-Bruhat function on $W_{F,y}$, denoted by $\varphi_{F,y}$.
\end{prop}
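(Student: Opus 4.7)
The plan is to express the evaluation $u_{F,y}(\varphi_{F,y})$ as an integral of a $\cCexp$-function over $W_{F,y}$, and then apply the stability of $\cCexp$ under integration from \cite[Theorem 4.1.1]{CHallp}.

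First, I will apply Proposition \ref{alpha+-:p} to $\varphi$ to produce a definable function $\alpha^+\colon Y\to \ZZ$ with the property that, whenever $\varphi_{F,y}$ is Schwartz-Bruhat on the strict $C^1$ manifold $W_{F,y}$, it is constant on every intersection $B\cap W_{F,y}$ for $B$ a ball of valuative radius $\alpha^+_F(y)$. After partitioning $Y$ into finitely many definable pieces I may also assume that the dimension $\ell = \dim W_{F,y}$ is constant on each piece (since being a strict $C^1$ submanifold of a given dimension is a definable condition), so on each piece $\ell$ is just an integer.

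The core identity I will establish is
\begin{equation}\label{eq:keyint}
u_{F,y}(\varphi_{F,y}) \;=\; \int_{x\in W_{F,y}} \varphi_F(y,x)\, q_F^{\alpha^+_F(y)\cdot \ell}\, E_F\!\bigl(y,x,\alpha^+_F(y)\bigr)\,\mu_{W_{F,y}},
\end{equation}
on the locus where all the hypotheses of the proposition hold. The justification runs as follows: since $\varphi_{F,y}$ is compactly supported and constant on each $B_{\alpha^+_F(y)}(x)\cap W_{F,y}$, it decomposes as a finite sum $\sum_i c_i \11_{B_i\cap W_{F,y}}$ with the $B_i$ disjoint balls of valuative radius $\alpha^+_F(y)$; by the definition of $u_{F,y}$ in terms of its $B$-function, one has $u_{F,y}(\varphi_{F,y}) = \sum_i c_i E_F(y,x_i,\alpha^+_F(y))$ for any $x_i\in B_i$. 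Using the isometric coordinate projection from Remark \ref{rem:isometry}, each such ball has $\mu_{W_{F,y}}$-measure $q_F^{-\alpha^+_F(y)\cdot \ell}$; since $\varphi_F(y,\cdot)$ and $E_F(y,\cdot,\alpha^+_F(y))$ are both constant on each $B_i\cap W_{F,y}$ (the latter because $E_F(y,\cdot,\cdot)$ is a $B$-function, hence a function on balls), formula \eqref{eq:keyint} is just a rewriting of this finite sum as an integral.

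To finish, I will note that the integrand in \eqref{eq:keyint} is a $\cCexp$-function on $W$: it is the product of $\varphi\in\cCexp(W)$, the $\cCexp$-function $q^{\alpha^+\cdot \ell}$ obtained from the definable data $\alpha^+$ and $\ell$, and the composition of $E\in\cCexp(W\times\ZZ)$ with the definable map $(y,x)\mapsto(y,x,\alpha^+(y))$. Applying \cite[Theorem 4.1.1]{CHallp} to integrate this function over $W_{F,y}$ (piecewise in $y$, so that the dimension of the fibers is constant) produces the desired $J\in\cCexp(Y)$; outside the $\cCexp$-locus furnished by \ref{Dis} and \ref{alpha+-:p}, I multiply by the characteristic function of that locus, so $J$ is defined on all of $Y$ and stays in $\cCexp(Y)$. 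The main nuisance will be the piecewise-constant dimension and the normalization of $\mu_{W_{F,y}}$ coming from Remark \ref{rem:isometry}, so that Theorem 4.1.1 (which is naturally phrased for integrals against the Haar measure on $\VF^\ell$) applies cleanly; once that is set up, the verification of \eqref{eq:keyint} is essentially a one-line bookkeeping.
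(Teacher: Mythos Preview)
Your approach is essentially identical to the paper's proof, which writes the same integral $q^{\alpha^+(y)\dim W_y}\int_{B_{\alpha^-(y)}(0)\cap W_y}\varphi(y,x)\,E(y,x,\alpha^+(y))\,\mu_{W_y}$ (the restriction to $B_{\alpha^-(y)}(0)$ is cosmetic since $\varphi$ vanishes outside it) and then invokes Theorem~4.1.1 of \cite{CHallp}. One caution: your final step of multiplying by the characteristic function of the $\cCexp$-locus from \ref{Dis} and \ref{alpha+-:p} is unnecessary---Theorem~4.1.1 already hands you $J\in\cCexp(Y)$ on all of $Y$, and the proposition only constrains $J$ on that locus---and in general not valid, since characteristic functions of $\cCexp$-loci need not themselves be of $\cCexp$-class.
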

\begin{proof}
Given $\varphi$, take definable functions $\alpha^+$ and $\alpha^-$ on $Y$ as in Proposition \ref{alpha+-:p}.
By the stability under integration of $\cCexp$-class function from Theorem 4.1.1 of \cite{CHallp}, there is a function \blue{$J$} in $\cCexp(Y)$ such that
$$
J(y) =   q^{\alpha^+(y) \blue{\dim W_y}} \int_{x\in B_{\alpha^-(y)} (0) \blue{\cap W_y} }   \varphi\blue{(y,x)} E(y,x,\alpha^+(y)) \blue{\mu_{W_y}},
$$
whenever \blue{$W_y$ is a  strict $C^1$ manifold and the integral is finite}, and where \blue{$\mu_{W_y}$ stands for the canonical measure on $W_y$ as in Section \ref{def:pushforward} and $q$ stands for the $\cCexp$-function which associates to $F$ the number of residue field elements $q_F$.} Then $J$ is as desired.
\end{proof}

By the previous proposition, we can evaluate $\cCexp$-class distributions on $\cCexp$-class Schwartz-Bruhat functions with outcome being again of the same class. The next result gives stability under Fourier transform.

%Recall that, for a distribution $u$ on $K^n$, the Fourier transform $\cF(u)$ of $u$ is the distribution on $K^n$ (with identification between $K^n$ and its dual via $\psi_K$) sending a Schwartz-Bruhat function $\varphi$ to $u(\cF(\varphi))$, with $\cF(\varphi)$ the usual Fourier transform of $\varphi$ using $\psi_K$.

\begin{thm}\label{Fourier:p}
The Fourier transform of a $\cCexp$-class distribution on $\VF^n$ is of $\cCexp$-class.
More precisely, consider a definable set $Y$ and let $E$ be in $\cCexp(Y\times \VF^n\times \ZZ)$ for some $n\geq 0$. Then there are $M>0$ and $\cF_{/Y}(E)$ in $\cCexp(Y\times \VF^n\times \ZZ)$ such that for each $F\in\Loc'_M$ and each $y\in {\rm {Dis}}(E,Y)_F$ one has that $\cF(E)_{/Y,F}(y,\cdot)$ is the $B$-function of the Fourier transform of the distribution associated to the $B$-function $E_F(y,\cdot)$, with respect to the character $\psi$ if one writes $F=(\underline F,\varpi,\psi)$ (see Definition \ref{def:four}).
\end{thm}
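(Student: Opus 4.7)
The plan is to reduce the computation of the $B$-function of $\cF(u_{F,y})$ to an explicit definable integral of a $\cCexp$-class function, and then invoke stability of $\cCexp$-class under integration from \cite[Theorem 4.1.1]{CHallp}.

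First I would record the explicit formula for the Fourier transform of a characteristic function of a ball, which is a standard computation: using that $\psi$ is trivial on $\cM_F$ and nontrivial on $\cO_F$, one obtains, for any $x \in F^n$ and $r \in \ZZ$,
\begin{equation*}
\cF(\11_{B_r(x)})(w) = q_F^{-rn}\,\psi(x\mid w)\,\11_{B_{1-r}(0)}(w).
\end{equation*}
In particular, since all balls in $F^n$ are compact, the $B$-function $D_v$ of $v := \cF(u_{F,y})$ is
\begin{equation*}
D_v(x,r) \;=\; v(\11_{B_r(x)}) \;=\; u_{F,y}\bigl(\cF(\11_{B_r(x)})\bigr)
\;=\; q_F^{-rn}\,u_{F,y}\bigl(\psi(x\mid\cdot)\,\11_{B_{1-r}(0)}\bigr).
\end{equation*}

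Next I would establish the general ``test against $E$'' formula: if $\varphi$ is a Schwartz-Bruhat function on $F^n$ that is constant on all balls of valuative radius $s$, then, by decomposing $\varphi$ as a finite $\CC$-linear combination of characteristic functions of balls of radius $s$ and using the definition of the $B$-function $E_F(y,\cdot,\cdot)$,
\begin{equation*}
u_{F,y}(\varphi) \;=\; q_F^{sn}\int_{w\in F^n}\varphi(w)\,E_F(y,w,s)\,|dw|.
\end{equation*}
Now I would apply this to $\varphi(w)=\psi(x\mid w)\,\11_{B_{1-r}(0)}(w)$. A level of constancy is provided by the definable function
\begin{equation*}
s(x,r)\;:=\;\max\bigl(1-r,\ 1-\min_i \ord x_i\bigr)
\end{equation*}
(with the usual convention that $\min_i \ord x_i = +\infty$ when $x=0$, in which case $s(x,r)=1-r$); this is definable in $\gLPas$ by a routine case distinction on whether $x$ vanishes. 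Combining, we obtain the explicit formula
\begin{equation*}
D_v(x,r) \;=\; q_F^{(s(x,r)-r)n}\int_{w\in B_{1-r}(0)}\psi(x\mid w)\,E_F\bigl(y,w,s(x,r)\bigr)\,|dw|.
\end{equation*}

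Finally, I would observe that the integrand on the right is a $\cCexp$-class function of $(y,x,r,w)$: the factor $\psi(x\mid w)$ is an exponential of a definable function; the factor $E_F(y,w,s(x,r))$ is the composition of $E\in\cCexp$ with the definable function $s$; and the indicator $\11_{B_{1-r}(0)}(w)$ together with the prefactor $q_F^{(s(x,r)-r)n}$ are $\cCexp$ by the formalism of \cite[Section 1.2]{CGH5}. Then \cite[Theorem 4.1.1]{CHallp} (stability of $\cCexp$-class under integration over $\VF$-variables) produces a function $\cF_{/Y}(E)\in\cCexp(Y\times\VF^n\times\ZZ)$ which, for $F\in\Loc'_M$ and $y\in{\rm Dis}(E,Y)_F$ with $M$ large enough, equals $D_v(x,r)$. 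Since $\cF(\11_{B_r(x)})\in\cS(F^n)$ so the right-hand side of the identity for $D_v(x,r)$ makes unconditional sense, no delicate convergence hypothesis is needed, and the main (mild) obstacle is simply to choose $s(x,r)$ definably and uniformly; everything else is an invocation of the already-developed machinery.
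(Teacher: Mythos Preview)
Your proof is correct and follows essentially the same approach as the paper's: both compute the $B$-function of $\cF(u_{F,y})$ as $u_{F,y}(\cF(\11_{B_r(x)}))$ and express this as a $\cCexp$-class integral via stability under integration (Theorem~4.1.1 of \cite{CHallp}). The only difference is packaging: the paper invokes Proposition~\ref{Julia'sdef:p} (evaluation of a $\cCexp$-class distribution on a $\cCexp$-class family of Schwartz-Bruhat functions), whose proof is precisely the formula $u_{F,y}(\varphi)=q_F^{\alpha^+ n}\int \varphi\cdot E\,|dw|$ that you rederive explicitly with $\alpha^+=s(x,r)$ and support bound $\alpha^-=1-r$.
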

\begin{proof}
This follows from Proposition \ref{Julia'sdef:p} applied to $E(y,\cdot)$ and the collection of the Schwartz-Bruhat functions which are the Fourier transform of characteristic functions of balls, which is of $\cCexp$-class by Theorem 4.1.1 of \cite{CHallp}.
\end{proof}

\subsection{The nature of wave front sets of $\cCexp$-class distributions}

%Note that an open subgroup $\Lambda$ of finite index in $F^\times$ is automatically definable, for any non-archimedean local field $F$.
%We rewrite Definition \ref{Hei:smooth} slightly, as follows.

By Example \ref{ex:smooth} below, and in the spirit of Theorem \ref{WFu=S}, the smooth locus of a $\cCexp$-class distribution is not always a definable set (it can in fact be far more general). The following theorem is one of our main results, namely that the micro-locally smooth locus of a $\cCexp$-class distribution is a $\cCexp$-locus, and thus, that the wave front set is the complement of the zero locus of a $\cCexp$-class function.

\begin{thm}[Loci of microlocal smoothness and zero loci]\label{Smo} Let $\Lambda\subset\VF$, $Y$, and $W\subset Y\times \VF^n$ be definable sets for some $n\geq 0$. Let $E$ be in $\cCexp(W\times \ZZ)$.
Define the sets, for $F$ in $\Loc'_{\gg 1}$,
$$
\Smo(\Lambda,E,Y)_F := \{(y,x,\xi)\in W_F\times F^n\mid y\in {\rm Dis}(E,Y)_F \mbox{  and }
$$
$$
\mbox{$\Lambda_F$ is an open subgroup of finite index in $F^\times$ and }
$$
$$
\mbox{the distribution associated to $E_{F,y}$ is $\Lambda_F$-smooth at $(x,\xi)$}     \}.
$$
Then $\Smo(\Lambda,E,Y)$ is a $\cCexp$-locus. Here, we have considered $T^*W_{F,y}$ inside $F^n\times F^n$ when $W_{F,y}$ is a strict $C^1$ submanifold of $F^n$.
In particular, if $Y$ is a point and $E_F$ is a distribution on a strict $C^1$ manifold $W_F$ for each $F$, then
$(\WF_{\Lambda} (E_F))_F$ is the complement of a $\cCexp$-locus.
\end{thm}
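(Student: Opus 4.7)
The plan is to translate $\Lambda$-microlocal smoothness into a first-order formula whose quantifiers are of the kinds stable for $\cCexp$-loci, namely quantification over definable sets and the ``sufficiently small'' quantifier, both treated in \cite[Propositions 1.3.1 and 1.4.1]{CGH5}, as noted in the introduction.

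First I would reduce, in a definable way, to the case where $W_{F,y}$ is open in some $F^\ell$. By Definition--Lemma \ref{Hei:smoothvar} combined with Remark \ref{rem:isometry}, $\Lambda$-smoothness at $(x,\xi) \in T^*W_{F,y}\setminus\{0\}$ can be tested via an isometric coordinate projection $p_I\colon F^n\to F^{|I|}$ with $|I|=\dim W_{F,y}$. A finite case split over $I\subseteq\{1,\dots,n\}$ together with the (definable) condition ``$p_I$ is an isometric chart at $x$'' reduces the question to showing that, for each fixed $I$, the locus of $(y,x,\xi)$ at which the pushforward $(p_I)_*u_{F,y}$ is $\Lambda_F$-smooth at $(p_I(x),({}^tDp_I(x))^{-1}\xi)$ is a $\cCexp$-locus; since $p_I$ is an isometry on the relevant balls, the pushforward has a $B$-function of $\cCexp$-class obtained by substitution, reducing the problem to the open case of Definition \ref{Hei:smooth}.

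In the open case, I would replace open neighborhoods by balls $U_0=B_{r_0}(x)$ and $V_0=B_{r_0'}(\xi)$ and exploit that any Schwartz-Bruhat function $\varphi$ with support in $B_{r_0}(x)$ is a finite $\CC$-linear combination of characteristic functions $\11_{B_s(a)}$ with $a\in B_{r_0}(x)$ and $s\geq r_0$. Using the linearity of the Fourier transform in $\varphi$ and taking the minimum of witnesses $N$ over the summands, the condition ``there exists $N_\varphi$ such that $\cF(\varphi u_{F,y})(\lambda\xi')=0$ for $\ord\lambda<N_\varphi$ and all $\xi'\in V_0$'' becomes equivalent to the analogous condition where $\varphi$ ranges only over such $\11_{B_s(a)}$. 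By the Paley--Wiener formula (Theorem \ref{thm:Paley-Wiener}),
\[
\cF(\11_{B_s(a)}u_{F,y})(\eta)=u_{F,y}\bigl(x'\mapsto \11_{B_s(a)}(x')\,\psi(x'\mid \eta)\bigr),
\]
and since the integrand is a $\cCexp$-class family of Schwartz-Bruhat functions in the parameters $(y,a,s,\eta)$, Proposition \ref{Julia'sdef:p} (applied with the parameter space enlarged to carry $(a,s,\eta)$) yields a $\cCexp$-function $\mathcal{P}(y,a,s,\eta)$ equal to this Fourier value.

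The microlocal smoothness condition at $(x,\xi)$ thus reads
\[
\exists r_0,r_0'\in\ZZ\ \forall a\in B_{r_0}(x)\ \forall s\geq r_0\ \exists N\in\ZZ\ \forall \lambda\in\Lambda_F,\ \ord\lambda<N,\ \forall \xi'\in B_{r_0'}(\xi):\ \mathcal{P}(y,a,s,\lambda\xi')=0,
\]
which is a first-order formula over the $\cCexp$-locus $\{\mathcal{P}=0\}$ using only quantifiers of the two allowed kinds (over definable sets, and the ``$\exists N:\forall \lambda,\ \ord\lambda<N$'' sufficiently-small quantifier). Applying \cite[Propositions 1.3.1 and 1.4.1]{CGH5}, together with the observation that ``$\Lambda_F$ is an open subgroup of finite index in $F^\times$'' is itself a definable condition, yields that $\Smo(\Lambda,E,Y)$ is a $\cCexp$-locus; the in-particular statement follows because $\WF_\Lambda(u)$ is precisely the complement of the smooth locus in $T^*W\setminus\{0\}$. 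The principal obstacle is eliminating the implicit ``for all Schwartz-Bruhat $\varphi$'' quantifier — $\varphi$ not being a definable object — in favor of quantifiers over the definable ball parameters $(a,s)$; this is where linearity of Fourier transform combined with Paley--Wiener is decisive. A secondary subtlety is ensuring the chart-based reduction genuinely preserves the $\cCexp$-class of the distribution and its $B$-function, which follows from the isometry property of $p_I$ via direct substitution.
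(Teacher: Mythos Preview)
Your proposal is correct and follows essentially the same route as the paper: the paper first isolates the reformulation of $\Lambda$-smoothness in terms of characteristic functions of balls as a separate Lemma (Lemma \ref{smooth}), then unwinds it exactly as you do, citing Propositions 1.3.1 and 1.4.1 of \cite{CGH5} for the universal and ``sufficiently large'' quantifiers, and finally reduces from general $W_{F,y}$ to the open case via coordinate-projection charts (citing the Jacobian property, Theorem 5.3.1 of \cite{CHallp}, where you invoke Remark \ref{rem:isometry}). One small point to make explicit: your outermost ``$\exists r_0,r_0'$'' is not literally one of the allowed quantifier shapes; it becomes one only because the inner condition is monotone in $r_0,r_0'$ (shrinking balls can only help), so that ``$\exists r_0$'' coincides with ``for all sufficiently large $r_0$''---this is precisely how the paper phrases it.
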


Note that the complement of a $\cCexp$-locus is not always a $\cCexp$-locus itself.
Before proving Theorem \ref{Smo}, we slightly rephrase Definition \ref{Hei:smooth}.
\begin{lem}\label{smooth}
Let $F$ be a non-archimedean local field and let $\Lambda$ be an open subgroup of finite index in $F^\times$.
Let $u$ be a distribution on $F^n$. Let $(x_0,\xi_0)$ be in $T^*F^n\setminus\{0\}$. Then, $u$ is $\Lambda$-smooth at $(x_0,\xi_0)$ if and only if, for sufficiently large $r\in \ZZ$, for all $x\in B_r(x_0)$ and all $s>r$, one has the following. For sufficiently large $\lambda\in\Lambda$ (namely with large norm) and all $\xi\in B_r(\xi_0)$ one has
$$
\cF(\11_{B_s(x)} u) (\lambda\xi) = 0.
$$
\end{lem}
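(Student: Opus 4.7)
My plan is to prove both implications by relating the characteristic functions $\11_{B_s(x)}$ of small balls to arbitrary Schwartz-Bruhat test functions $\varphi$ supported in a small neighborhood of $x_0$, exploiting that any such $\varphi$ is a finite $\CC$-linear combination of indicator functions of balls.

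For the forward implication, I would invoke Definition \ref{Hei:smooth} to obtain open neighborhoods $U_0$ of $x_0$ and $V_0$ of $\xi_0$ witnessing the $\Lambda$-smoothness of $u$ at $(x_0,\xi_0)$, and then pick $r \in \ZZ$ large enough that $B_r(x_0) \subseteq U_0$ and $B_r(\xi_0) \subseteq V_0$. For any $x \in B_r(x_0)$ and any $s > r$, the ultrametric inequality gives $B_s(x) \subseteq B_r(x) = B_r(x_0) \subseteq U_0$, so $\11_{B_s(x)}$ is a Schwartz-Bruhat function supported in $U_0$. Applying Definition \ref{Hei:smooth} to the test function $\varphi = \11_{B_s(x)}$ then directly yields an integer $N = N(x,s)$ such that $\cF(\11_{B_s(x)} u)(\lambda\xi) = 0$ whenever $\ord \lambda < N$ and $\xi \in V_0 \supseteq B_r(\xi_0)$, which is the desired conclusion.

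For the backward implication, suppose the rephrased condition holds for some (sufficiently large) $r \in \ZZ$. I would set $U_0 = B_r(x_0)$ and $V_0 = B_r(\xi_0)$ and verify Definition \ref{Hei:smooth} with these neighborhoods. Given any Schwartz-Bruhat function $\varphi$ supported in $U_0$, local constancy together with the compactness of $\Supp(\varphi) \subseteq B_r(x_0)$ produces some $s > \max(r, \alpha^{+}(\varphi))$ and a finite decomposition
$$\varphi = \sum_{i=1}^{k} c_i\, \11_{B_s(x_i)}$$
with coefficients $c_i \in \CC$ and centers $x_i \in B_r(x_0)$. By the hypothesis of the lemma, for each index $i$ there exists an integer $N_i$ such that $\cF(\11_{B_s(x_i)} u)(\lambda\xi) = 0$ whenever $\ord \lambda < N_i$ and $\xi \in V_0$. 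Setting $N = \min_{1 \leq i \leq k} N_i$ and using $\CC$-linearity of the Fourier transform gives
$$\cF(\varphi u)(\lambda\xi) = \sum_{i=1}^{k} c_i\, \cF(\11_{B_s(x_i)} u)(\lambda\xi) = 0$$
for every $\lambda \in \Lambda$ with $\ord \lambda < N$ and every $\xi \in V_0$, which is exactly Definition \ref{Hei:smooth}.

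There is no substantial obstacle in this argument; it is a direct unpacking of the two formulations combined with the standard fact that Schwartz-Bruhat functions decompose finitely as linear combinations of ball indicators. The only mild care needed is in tracking quantifier orders, noting that in both directions the integer $N$ may depend on $x$ and $s$ (and on the chosen $r$), which is precisely what the formulations permit.
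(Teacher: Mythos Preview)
Your proof is correct and follows the same approach as the paper's own proof, which simply observes that Schwartz-Bruhat functions are finite linear combinations of characteristic functions of balls and that any neighborhood of $x_0$ contains a ball $B_r(x_0)$. Your write-up is a careful, fully detailed version of precisely that two-line argument.
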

\begin{proof}
Since Schwartz-Bruhat functions are finite linear combinations of characteristic functions of balls and since any neighborhood of $x_0$ contains an open ball of the form $B_r(x_0)$, the lemma follows.
\end{proof}

\begin{proof}[Proof of Theorem \ref{Smo}]
Again we use that intersections of $\cCexp$-loci are $\cCexp$-loci.
For a definable subset $\Lambda_F$ of $F^\times$ to be a subgroup is clearly a definable condition. Given that $\Lambda_F$ is a subgroup of $F^\times$, it is open if and only if it contains an open neighborhood of $1$, and, it is of finite index if and only if for any integer $N>0$ there is $\lambda\in\Lambda_F$ with $\ord (\lambda)>N$. Hence, the joint conditions on $\Lambda_F$ that it be an open subgroup of finite index in $F^\times$ is a definable condition, uniformly for $F$ in $\Loc'$.

We now first treat the special case that $W_{F,y}=F^n$ for all $F$. We proceed by unwinding Lemma \ref{smooth}. For $y\in {\rm {Dis}}(E,Y)$, and $u_y$ the distribution associated to $E(y,\cdot)$,
$\cF(\11_{B_s(x)}  u_y) (\lambda\xi) = 0$ is clearly a $\cCexp$-locus condition on the involved variables $s,x,y,\lambda,\xi$. Also, by Proposition 1.3.1 and 1.4.1 of \cite{CGH5}, the condition `for all sufficiently large $\lambda\in\Lambda$' is a $\cCexp$-locus condition, as are the conditions `$s>r$', `for all $x\in B_r(x_0)$', and, `for sufficiently large $r$'. Putting this together we find that the condition of $\Lambda$-smoothness on $(x_0,\xi_0)$ is a $\cCexp$-locus condition on $(x_0,\xi_0)$ when $W_{F,y}=\VF_F^n$. The case of general $W_{F,y}$ follows from this by taking charts, which can be taken to be coordinate projections by the following property. Let $F$ be a non-archimedean local field, let $X\subset F^n$ be a strict $C^1$ submanifold of dimension $m$ and let $x$ be in $X$. Then there exists an open neighborhood $O$ of $x$ in $X$ and a subset $I$ of $\{1,\ldots,n\}$ with $m$ elements, such that the coordinate projection $p_I$ sending $x=(x_1,\ldots,x_n)$ to $(x_{i})_{i\in I}$, gives a strict $C^1$ chart for $O$ (the existence of such $O$ follows from the Jacobian property Theorem 5.3.1 of \cite{CHallp}).
\end{proof}

%We recall the conditions singled out in \cite{Hei85a} adapting \cite{Hormander83} for taking pull-backs of distributions.
%
%\begin{defn}[Heifetz condition for pull-back]\label{def-heif}
%Let $f:X\to Y$ be an analytic map between analytic subvarieties of $K^n$, resp.~of $K^m$ and let $u$ be a distribution on $Y$. Write $T^* Y$ for the co-tangent bundle on $Y$, and write $T^* Y \setminus \{0\}$ when the $0$ is removed in each fiber of the bundle $T^* Y$. Put
%$$
%N_f = \{ (y , \eta) \in T^* Y \setminus \{0\} \mid \mbox{there is x with $y = f(x)$ and $^tDf(x)\cdot \eta = 0$}  \}.
%$$
%Suppose that $WF_\Lambda(u)$ is disjoint from $N_f$.  Under these conditions, say that $u$ satisfies the $\Lambda$-Heifetz conditions for the pull-back along $f$.
%\end{defn}

%We specify conditions, generalizing \cite{Hei85a} and adapting \cite{Hormander83}, for taking pull-backs of distributions.

%By Definitions \ref{defn:wf} and \ref{Hei:smooth},

The previous theorem can be understood as a description of wave front sets as complements of $\cCexp$-loci.

Our next main result gives that pull-backs of $\cCexp$-class distributions are still of $\cCexp$-class. Recall the notation $N_f$ of Section \ref{sec:sequences}
for a strict $C^1$ map $f:X\to Z$ of strict $C^1$ manifolds, with
$$
N_f = \{ (z , \eta) \in T^*Z\setminus\{0\} \mid \exists x\in X \mbox{ with $f(x)=z$ and } 0={}^tDf(x)\eta \}.
$$

In Theorem \ref{Hei2}, $f^*(E)$ will stand for the pull-back of the distribution $E$ of $\cCexp$-class.

%\begin{defn}[Conditions for pull-backs]\label{def-heif}
%Let $f:X\subset K^n\to Y\subset K^m$ be a strict $C^1$ morphism between strict $C^1$ submanifolds, where $K$ is a non-archimedean local field. Let $u$ be a distribution on $Y$.
%the $0$ is removed in each fiber of the bundle $T^* Y$.
%Suppose that $\WF_\Lambda(u)$ is disjoint from $N_f$. Under these conditions, say that $(u,f,\Lambda)$ satisfies the pull-back conditions.
%\end{defn}

%\begin{prop}\label{Hei1}
%Let $f:X\to Y$ be a strict $C^1$  map between strict $C^1$ subvarieties of $K^n$, resp.~of $K^m$ and let $u$ be a distribution on $Y$. Suppose that $f$ and $u$ are definable and suppose that $u$ satisfies the $\Lambda$-Heifetz condition for the pull-back along $f$. Then $f^*(u)$, as defined by Heifetz, is a $\cCexp$-class distribution on $X$.
%\end{prop}
%\begin{proof}
%This proposition follows from the more general Theorem \ref{Hei2} below.
%%Proposition \ref{Hei2C1} below.
%\end{proof}
%
%
%
%\begin{prop}\label{Push1}
%Let $f:X\to Y$ be a definable strict $C^1$ map between definable strict $C^1$ subvarieties of $K^n$, resp.~of $K^m$ and let $u$ be a distribution on $X$.
%Suppose that $f$ is proper on the support of $u$.
%Then $f_*(v)$ is a $\cCexp$-class distribution.
%\end{prop}
%\begin{proof}
%The proposition follows from the more general Theorem \ref{Push2} below.
%%Proposition \ref{Push2C1} below.
%\end{proof}

\begin{thm}[Pull-backs of $\cCexp$-class distributions]\label{Hei2}
Let $\Lambda\subset\VF$, $Y$, and $W_i\subset Y\times \VF^{n_i}$ be definable sets for $i=1,2$ and some $n_i\geq 0$. Let $E$ be in $\cCexp(W_2\times \ZZ)$.
Consider a definable function $f:W_1\to W_2$ over $Y$.
Then, the collection of sets for $F\in\Loc'_{\gg 1}$
$$
{\rm Pull}(\Lambda,f,E,Y)_F := \{ y\in Y_F \mid y\in {\rm Dis}(E,Y)_F \mbox{  and }
$$
$$
\mbox{ $f_{F,y}$ is a strict $C^1$ morphism between strict $C^1$ manifolds $W_{i,F,y}$, and }
$$
$$
N_{f_{F,y}} \subset \Smo(\Lambda,E,Y)_{F,y} \}
$$
is a $\cCexp$-locus. %Namely, there exist  $M>0$ and $g\in \cCexp(Y)$ such that $Pull(\Lambda,f,E,Y)_F$ equals the zero locus of $g_F$ for each $F$ in $\Loc_M$.
Moreover, there is a function $f^*(E)$ in  $\cCexp(W_1\times \ZZ)$ such that $(f^*(E))_{F,y}$ is the $B$-function of $f_{F,y}^*(u_{E_{F,y}})$ whenever $y$ lies in $Pull(\Lambda,f,E,Y)_F$, and where $u_{E_{F,y}}$ is the distribution associated to $E_F(y,\cdot)$.
\end{thm}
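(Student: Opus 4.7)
\medskip

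\noindent\textbf{Proof plan.}
The statement has two parts. For the first part, that $\mathrm{Pull}(\Lambda,f,E,Y)$ is a $\cCexp$-locus, I would combine three ingredients. First, the condition $y\in \mathrm{Dis}(E,Y)_F$ already cuts out a $\cCexp$-locus by Definition-Proposition~\ref{Dis}. Second, the condition that $f_{F,y}$ be a strict $C^1$ morphism between strict $C^1$ manifolds is a definable condition (one reads Definition~\ref{def:C^1} as a $\forall\exists\forall$ condition on tuples of field elements, whose quantifier rank is controlled by the generalised Denef-Pas language). Third, the inclusion $N_{f_{F,y}}\subset \Smo(\Lambda,E,Y)_{F,y}$ can be written
\[
\forall (x,\eta)\in W_{1,F,y}\times (F^{n_2}\setminus\{0\})\colon\ {}^tDf_{F,y}(x)\eta=0\ \Rightarrow\ (y,f_{F,y}(x),\eta)\in \Smo(\Lambda,E,Y);
\]
since $\Smo$ is a $\cCexp$-locus by Theorem~\ref{Smo}, universal quantification preserves $\cCexp$-loci by Proposition~1.3.1 of~\cite{CGH5}, and finite intersections of $\cCexp$-loci are $\cCexp$-loci, the claim follows.

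For the second part I would construct $f^*(E)$ by using the explicit formula for $f^*u$ that emerges from the proof of Theorem~\ref{thm:pull}. Working in local strict $C^1$ charts provided by the Jacobian property from Theorem~5.3.1 of~\cite{CHallp} (which, by the end of the proof of Theorem~\ref{Smo}, can be taken to be coordinate projections chosen uniformly in families), I reduce to the case where $W_{1,F,y}$ and $W_{2,F,y}$ are definable open subsets of affine space. The value of $f^*(E)_{F}(y,x_0,r)$ should then be $\langle f^*_{F,y} u_{E_{F,y}}, \11_{B_r(x_0)\cap W_{1,F,y}}\rangle$ when the ball is compact in $W_{1,F,y}$, and zero otherwise. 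Writing $\chi=\11_{B_r(x_0)\cap W_{1,F,y}}$ and picking $\varphi$ to be the characteristic function of a uniformly chosen ball in $W_{2,F,y}$ containing $f(\mathrm{supp}\,\chi)$ (which is clearly a Schwartz-Bruhat function of $\cCexp$-class depending on the parameters $y,x_0,r$), the key formula is
\[
\langle f^*_{F,y}u_{E_{F,y}},\chi\rangle\ =\ \int_{\eta\in F^{n_2}} \mathcal F(\varphi u_{E_{F,y}})(\eta)\, I_\chi(\eta)\,|d\eta|,
\]
with $I_\chi(\eta)=\mathrm{vol}(B_1)\int \chi(x)\psi(-f(x)\mid\eta)|dx|$.

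The three pieces of this formula are each of $\cCexp$-class: $\mathcal F(\varphi u_{E_{F,y}})$ comes from Theorem~\ref{Fourier:p} applied to the $\cCexp$-class distribution with $B$-function $\varphi\cdot E$; the oscillatory integral $I_\chi(\eta)$ is $\cCexp$-class in $(y,x_0,r,\eta)$ by stability of $\cCexp$ under definable integration of characters against characteristic functions (Theorem~4.1.1 of~\cite{CHallp}); and the outer integral over $\eta$ is again absorbed by Theorem~4.1.1, \emph{once convergence is secured}. Convergence is where the assumption $y\in \mathrm{Pull}(\Lambda,f,E,Y)_F$ is used: as in the proof of Theorem~\ref{thm:pull}, one splits $F^{n_2}=V\sqcup V^c$, with $V$ a definable open $\Lambda$-cone neighborhood of the fibre $\Gamma_{y_0}:=\WF_\Lambda(u_{E_{F,y}})_{f(x_0)}$, on which the stationary phase Proposition~\ref{prop:stat} forces $I_\chi$ to have uniformly bounded support in $\eta$, and on $V^c$ of which microlocal smoothness on $N_f$ forces $\lambda\mapsto \mathcal F(\varphi u_{E_{F,y}})(\lambda\eta)$ to have uniformly bounded support. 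The definable witnesses for these ``sufficiently large'' bounds are furnished by Proposition~1.4.1 of~\cite{CGH5}.

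The main obstacle is this last step: making the splitting $V/V^c$, the cutoff $\varphi$, and the various ``sufficiently large'' thresholds uniformly definable in the parameter $y$ (and additionally in $x_0,r$ for the purpose of obtaining a $\cCexp$-function on $W_1\times\ZZ$), and then showing that the two resulting truncated integrals are separately absolutely convergent $\cCexp$-class functions whose sum is independent of the choices made. Once this uniform version of Theorem~\ref{thm:pull} is set up, elimination of the universal and ``sufficiently large'' quantifiers (as already exploited in the proof of Theorem~\ref{Smo}) together with the stability-under-integration theorem of~\cite{CHallp} produce the sought $f^*(E)\in\cCexp(W_1\times\ZZ)$, and its characterization as the $B$-function of $f^*_{F,y}(u_{E_{F,y}})$ is forced fibrewise by Theorem~\ref{thm:pull}.
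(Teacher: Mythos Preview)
Your argument for the first part---that $\mathrm{Pull}(\Lambda,f,E,Y)$ is a $\cCexp$-locus---is correct and is essentially what the paper does: write $\Smo(\Lambda,E,Y)$ as the zero locus of some $g\in\cCexp$, and note that the inclusion $N_{f_{F,y}}\subset\Smo(\Lambda,E,Y)_{F,y}$ holds iff the product of $g$ with the characteristic function of the definable set $N_f$ vanishes identically, which is a universal condition over a $\cCexp$-locus.

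For the second part you take a genuinely different route from the paper, and the obstacle you flag at the end is real and is not resolved by your outline. The difficulty is not just bookkeeping: the cone $V$ in the proof of Theorem~\ref{thm:pull} is a neighborhood of the wave-front fiber $\Gamma_{y_0}$, and by Example~\ref{ex:smooth} the wave front set is in general \emph{not} a definable set (only its complement is a $\cCexp$-locus). More concretely, your single formula $\int\mathcal F(\varphi u)(\eta)I_\chi(\eta)\,d\eta$ with a ``large'' $\varphi$ covering $f(\operatorname{supp}\chi)$ need not converge: the product vanishes for large $|\eta|$ only when $\bigl(\bigcup_{z\in\operatorname{supp}\varphi}\WF(u)_z\bigr)\cap\bigl(\bigcup_{x\in\operatorname{supp}\chi}\ker{}^tDf(x)\bigr)=\emptyset$, and the hypothesis $N_f\cap\WF(u)=\emptyset$ does not preclude a direction $\eta$ lying in $\WF(u)_z$ for one base point $z$ and in $\ker{}^tDf(x)$ for a different $x$. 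The localization to the small $X_0,Y_0$ in Theorem~\ref{thm:pull} is precisely what rules this out, but ``small enough'' depends on the non-definable wave front set. Repairing this by refining the ball and passing to a limit brings you back to the paper's method.

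The paper sidesteps all of this: it invokes Lemma~\ref{lem:seq} to produce a uniformly $\cCexp$-class family $G\in\cCexp(W_2\times\ZZ)$ such that $(G_{F,y,i})_i$ is a sequence of Schwartz--Bruhat functions converging to $u_{E_{F,y}}$ in the $\cS'_{\Gamma,\Lambda}$-sense whenever that makes sense. The pull-backs of these approximations are simply the compositions $G_{F,y,i}\circ f_{F,y}$, whose $B$-functions are trivially of $\cCexp$-class. One then applies Theorem~3.1.3(2) of \cite{CGH5} on limits to obtain $f^*(E)\in\cCexp(W_1\times\ZZ)$, and the continuity of $f^*$ from Theorem~\ref{thm:pull} guarantees that for $y\in\mathrm{Pull}(\Lambda,f,E,Y)_F$ the limit is the $B$-function of $f^*_{F,y}(u_{E_{F,y}})$. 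This approach never needs to make $V$, $Y_0$, $X_0$ or any stationary-phase threshold definable; it treats Theorem~\ref{thm:pull} as a black box guaranteeing pointwise convergence, and lets the limit machinery of \cite{CGH5} absorb the non-uniformity.
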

\begin{proof} Clearly the property that $f_{F,y}$ is a strict $C^1$ morphism between strict $C^1$ manifolds $W_{i,F,y}$ is a definable condition on $y\in Y_F$ and $F\in \Loc'_{\gg 1}$. Hence, we may focus on the condition
\begin{equation}\label{eq:inc}
N_{f_{F,y}} \subset \Smo(\Lambda,E,Y)_{F,y},
\end{equation}
for the relevant $y$ and $F$, and on the existence of $f^*(E)$ in $\cCexp(W_1\times \ZZ)$.
Write $u_{F,y}$ for $u_{E_{F,y}}$.
Clearly $N_{f_{F,y}}$ is a family of definable sets, say a definable set $N_f$ with $N_{f,F,y}=N_{f_{F,y}}$.
Consider $\Smo(\Lambda,E,Y)_F$, and let it be the locus of $g$ in $\cCexp(W_{2,F}\times F^n)$ as given by Theorem \ref{Smo}.
The inclusion condition (\ref{eq:inc}) holds if and only if the product of the characteristic function of $N_{f}$ with $g$ is identical zero. Since the $\forall$ quantifier can be eliminated from $\cCexp$-locus conditions by Proposition 1.3.1 of \cite{CGH5}, it follows that ${\rm Pull}(\Lambda,f,E,Y)$ is a $\cCexp$-locus. Let us now construct $f^*(E)$ as desired.
%Let us assume $X$ and $Y$ are open in $K^n$, resp.~$K^m$.
%If $u_y$ comes from a $C^\infty$ function, say $U_y$, on $W_y$, then $U_y$ is a family of functions in $\cCexp(Y_t)$ by letting $U_t(y)$ be the limit of $q_K^{mr} D_t(y,r)$ for $r\to \infty$, which lies in $\cCexp$ by \cite{CGH5}.
Take $G$ in $\cCexp(W_2\times \ZZ)$, as provided by Lemma \ref{lem:seq} below. So, for each $F$ in $\Loc'_{\gg 1}$ and each $y\in {\rm Pull}(\Lambda,f,E,Y)_F$, the sequence of functions $G_{F,y,i}$ for $i\geq 0$ is a sequences of Schwartz-Bruhat functions such that the associated sequence of distributions $u_{y,i}$ converges to $u_{y} = u_{E_{F,y}}$ for the topology of $\cS'_{\WF_{\Lambda_F}(u_{F,y}),\Lambda_F}(W_{2,F,y})$ as in Definition \ref{def:S'_Gamma-convergence}. For each such $F$ and $y$ and each $i\geq 0$, let $u^*_{y,i}$ be the distribution associated to $G_{F,y,i}\circ f_{F,y}$. Clearly the collection of $B$-functions of the $u^*_{y,i}$ is of $\cCexp$-class, for $F$ in $\Loc'_{\gg 1}$ and $y\in {\rm Pull}(\Lambda,f,E,Y)_F$.
By Theorem 3.1.3 (2) of \cite{CGH5} on limits, there exists $f^*(E)$ in $\cCexp(W_1\times \ZZ)$ such that, for each $F\in \Loc'_{\gg 1}$, each $y\in Y_F$, each $x\in W_{1,F,y}$, and each $r\in\ZZ$,
$f^*(E)_{F}(y,x,r)$ equals $0$ whenever $B_r(x) \cap W_{1,F,y}$ is not compact, and, equals the limit for $i\to +\infty$ of
$$
u^*_{y,i}(\11_{B_r(x)} \cap W_{1,F,y}),
$$
whenever this limit exists and $B_r(x) \cap W_{1,F,y}$ is compact. Then $f^*(E)$ is as desired, by the continuity property of the pull-back as given by Theorem \ref{thm:pull}.
\end{proof}

%Note that the complement of the support of a distribution of $\cCexp$-class is a locus of  $\cCexp$-class. \red{More precisely, ... }

\begin{lem}[Converging sequences of $\cCexp$-class distributions]\label{lem:seq}
%Let $\Lambda\subset\VF$ be a definable set such that $\Lambda_F$ is an open subgroup of finite index in $F^\times$ for each $F\in \Loc$.
Consider definable sets $\Lambda\subset\VF$,  $Y$ and  $W\subset Y\times \VF^{n}$ for some $n\geq 0$. Let $E$ be in $\cCexp(W\times \ZZ)$.
Suppose for each $y\in {\rm {Dis}}(E,Y)$ that $W_{F,y}$ is a strict $C^1$ submanifold of $F^n$, that $\Lambda_F$ is an open subgroup of finite index in $F^\times$, and that
$\Gamma_{F,y}$ is a $\Lambda_F$-cone. (Note that we require no conditions on the family of cones $\Gamma_{F,y}$.)
%Suppose for each $F$ in $\Loc$ and each $y\in Y_F$ that $W_{F,y}$ is a strict $C^1$ submanifold of $F^n$ and that $\Gamma_{F,y}$ is a $\Lambda_F$-cone.
%Then the joint conditions, on $y\in {\rm {Dis}}(E,Y)$, that $W_{F,y}$ is a strict $C^1$ submanifold of $F^n$, that $\Lambda_F$ is an open subgroup of finite index in $F^\times$, that $\Gamma_{F,y}$ is a $\Lambda_F$-cone, and that
%$$
%u_{E_{F,y}} \mbox{ belongs to } \cS'_{\Gamma_{F,y},\Lambda_F}(W_{F,y}),
%$$
%is a $\cCexp$-locus condition on $y\in Y$. Furthermore,
Then there exists $\varphi$ in $\cCexp(W\times \ZZ)$ such that
each $\varphi_{F,y,j}$ is a Schwartz-Bruhat function, and such that, for each $y \in {\rm {Dis}}(E,Y)$,
the sequence $\varphi_{F,y,j}$ for $j\in\NN$ converges to $u_{E_{F,y}}$ in the $\cS'$-sense, and if moreover $u_{E_{F,y}}$ belongs to
$\cS'_{\Gamma_{F,y},\Lambda_F}(W_{F,y})$, then it also converges to $u_{E_{F,y}}$ in the $\cS'_{\Gamma_{F,y},\Lambda_F}$-sense,
where we identify the Schwartz-Bruhat functions $\varphi_{F,y,j}$ with the associated distribution and where convergence is as in Definition \ref{def:S'_Gamma-convergence}. %The definable, $p$-adic analogue of Thm 8.2.3 of \cite{Hormander83} to create a definable sequence of Schwartz-Bruhat functions converging to a given distribution, in families.
\end{lem}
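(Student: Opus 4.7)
The plan is to construct $\varphi$ by a direct formula in terms of $E$ that mimics the approximation $u_j = (\chi_j u)*\Phi_j$ from the proof of Proposition \ref{SB-approx-S'}, with canonical choices $\chi_j = \11_{B_{-j}(0)}$ and $\Phi_j = q^{j\alpha(y)} \11_{B_j(0)}$, where $\alpha(y) := \dim W_{F,y}$ is a definable function. In the flat case $W_{F,y}=F^n$ one checks directly that $(\chi_j u)*\Phi_j(x) = \11_{B_{-j}(0)}(x)\cdot q^{jn}\cdot u(\11_{B_j(x)}) = \11_{B_{-j}(0)}(x)\cdot q^{jn}\cdot E(y,x,j)$. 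Motivated by this, I would set
\[
\varphi(y, x, j) := \11_{B_{-j}(0)}(x) \cdot q^{j\alpha(y)} \cdot E(y, x, j),
\]
with the convention that $E(y,x,j)=0$ when $B_j(x)\cap W_{F,y}$ is not compact. Membership $\varphi \in \cCexp(W\times\ZZ)$ is immediate from the definability of $\alpha$ and the closure properties of $\cCexp$ from \cite{CGH5}. Each $\varphi_{F,y,j}$ is Schwartz-Bruhat: the cutoff $\11_{B_{-j}(0)}$ forces compact support, and local constancy at radius $j$ is built into the $B$-function axiom that $E(y,\cdot,j)$ is a function on balls of radius $j$.

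For the $\cS'$-convergence, for any $\psi\in\cS(W_{F,y})$ and any $j\geq\max(\alpha^+(\psi),-\alpha^-(\psi))$, I would decompose $\psi = \sum_k c_k \11_{B_j(b_k)\cap W_{F,y}}$ with $\{b_k\}$ finite, and use that the induced measure satisfies $\mu_{W_{F,y}}(B_j(b_k)\cap W_{F,y}) = q^{-j\alpha(y)}$ (via a coordinate-projection isometry chart, Remark \ref{rem:isometry}) to obtain $\langle\varphi_{F,y,j},\psi\rangle = \sum_k c_k E(y,b_k,j) = \langle u_{E_{F,y}},\psi\rangle$; thus the sequence is eventually equal to $u_{E_{F,y}}$ on any fixed $\psi$.

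For the finer $\cS'_{\Gamma_{F,y},\Lambda_F}$-convergence under the hypothesis $u_{E_{F,y}}\in\cS'_{\Gamma_{F,y},\Lambda_F}(W_{F,y})$, the crucial identity is the flat-case computation
\[
\cF(\phi\varphi_{F,y,j})(\lambda\eta) = \11_{B_{1-j}(0)}(\lambda\eta) \cdot \cF(\phi u_{E_{F,y}})(\lambda\eta),
\]
valid for $j\geq\max(\alpha^+(\phi),-\alpha^-(\phi))$, obtained by expanding $\phi\varphi_{F,y,j}$ as a finite sum of characteristic functions of balls of radius $j$ and invoking the elementary identity $\int_{B_j(0)}\psi_K(x|\zeta)\,dx = q^{-jn}\11_{B_{1-j}(0)}(\zeta)$. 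For $(x_0,\eta_0)\notin\Gamma_{F,y}$ and $\phi$ supported in a $\Lambda_F$-smooth neighborhood of $x_0$, microlocal smoothness of $u_{E_{F,y}}$ supplies an integer $N_{MS}$ with $\cF(\phi u_{E_{F,y}})(\lambda\eta)=0$ for $\ord\lambda < N_{MS}$ and $\eta$ near $\eta_0$; the displayed identity then kills $\cF(\phi\varphi_{F,y,j})(\lambda\eta)$ for all such $\lambda,\eta$ and all large $j$. For the finitely many remaining small $j\in\NN$, each $\varphi_{F,y,j}$ is itself Schwartz-Bruhat, hence $\cF(\phi\varphi_{F,y,j})$ has compact support, yielding an integer $N_j$ bounding where it vanishes; taking $N:=\min(N_{MS},\{N_j\})$ over this finite set gives the uniform integer required by Definition \ref{def:S'_Gamma-convergence}.

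The main obstacle is transferring this flat-case calculation to a general strict $C^1$ submanifold $W_{F,y}$, where Fourier transforms are only defined chart-wise. This is handled by working locally through coordinate-projection isometry charts (Remark \ref{rem:isometry} together with the Jacobian property \cite[Theorem 5.3.1]{CHallp}): such charts preserve balls and the induced measure, so both the $\cS'$- and the $\cS'_{\Gamma,\Lambda}$-convergence arguments transfer pointwise through each chart, and the chart-wise definition of microlocal smoothness (Definition-Lemma \ref{Hei:smoothvar}) combined with the local nature of $\cS'_{\Gamma,\Lambda}$-convergence then globalizes the argument on $W_{F,y}$.
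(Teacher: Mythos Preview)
Your approach is essentially the paper's own: the paper's one-line proof simply invokes the construction $u_j=(\chi_j u)*\Phi_j$ from Propositions~\ref{SB-approx-S'} and~\ref{SB-approx-S':bis}, and your explicit choice $\chi_j=\11_{B_{-j}(0)}$, $\Phi_j=q^{j\alpha(y)}\11_{B_j(0)}$ is exactly a canonical instance of that. Your computation $(\chi_j u)*\Phi_j(x)=\11_{B_{-j}(0)}(x)\,q^{j\alpha(y)}E(y,x,j)$ is correct, and the resulting formula is visibly of $\cCexp$-class.

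Two small remarks. First, in the manifold case your equality $\mu_{W_{F,y}}(B_j(b_k)\cap W_{F,y})=q^{-j\alpha(y)}$ need not hold at the stated threshold $j\ge\max(\alpha^+(\psi),-\alpha^-(\psi))$; it only holds once $j$ is large enough that an isometric coordinate-projection chart around each $b_k$ contains $B_j(b_k)\cap W_{F,y}$ and maps it onto a full ball. Since there are finitely many $b_k$, this just shifts the threshold and does not affect the eventual equality $\langle\varphi_{F,y,j},\psi\rangle=\langle u_{E_{F,y}},\psi\rangle$. Second, your Fourier identity $\cF(\phi\varphi_{F,y,j})=\11_{B_{1-j}(0)}\cdot\cF(\phi u)$ is a nice sharpening of what the paper actually uses in the proof of Proposition~\ref{SB-approx-S':bis} (which goes through the convolution expression for $\langle\varphi u_j,\psi_K(\cdot\mid\lambda\eta)\rangle$); your version makes the uniformity in $j$ more transparent, and your treatment of the finitely many small $j$ via compact support of $\cF(\phi\varphi_{F,y,j})$ is clean. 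The reduction of the manifold case to the flat one via charts is handled in the same local spirit as in the paper.
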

\begin{proof}
%The $\cCexp$-nature of the condition that $u_{E_{F,y}} \mbox{ belongs to } \cS'_{\Gamma_{F,y},\Lambda_F}(W_{F,y})$ is already shown in the proof of Theorem \ref{Hei2}.
The existence of $\varphi$ in $\cCexp(W\times \ZZ)$ as desired follows directly from the construction in the proof of Proposition \ref{SB-approx-S':bis}.
\end{proof}

As final result of this section we show that also push-forwards of $\cCexp$-class distributions are of $\cCexp$-class; notationally in Theorem \ref{Push2}, $f_*(E)$ stands for the push-foward of the distribution $E$ of $\cCexp$-class.

\begin{thm}[Push-forwards of $\cCexp$-class distributions] \label{Push2}
Consider definable sets $Y$ and $W_i\subset Y\times \VF^{n_i}$ and for some $n_i\geq 0$ and $i=1,2$ and a definable function $f:W_1\to W_2$ over $Y$.
Let $E$ be in $\cCexp(W_1\times \ZZ)$.
Define, for $F\in \Loc'_{\gg 1}$,
$$
{\rm{Prop}}(f,E,Y)_F := \{ y\in Y_F \mid y\in {\rm Dis}(E,Y)_F
$$
$$
\mbox{ $f_{F,y}$ is a strict $C^1$ morphism between strict $C^1$ manifolds $W_{i,F,y}$, and }
$$
$$
\mbox{ $f_{F,y}$ is proper on the support of $u_{E_{F,y}}$} \}.
$$
Then, the collection ${\rm{Prop}}(f,E,T)_F$ for $F\in \Loc'_{\gg 1}$ is a $\cCexp$-locus, and, there are $M>0$ and $f_*(E)$ in $\cCexp(W_2\times \ZZ)$ such that for all $F\in \Loc'_{\gg 1}$, $(f_*(E))_{F,y}$ is the $B$-function of $f_{F,y,*}(u_{E_{F,y}})$ whenever $y$ lies in ${\rm Prop}(f,E,Y)_F$.
\end{thm}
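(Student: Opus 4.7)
The plan is to split the proof into two parts: first I would show that the locus $\mathrm{Prop}(f,E,Y)$ is a $\cCexp$-locus, and then construct $f_*(E)$ as a $\cCexp$-class function via an integration-and-limit argument.

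For the first part, the conditions $y \in \mathrm{Dis}(E,Y)_F$ and that $f_{F,y}$ is a strict $C^1$ morphism of strict $C^1$ submanifolds are already $\cCexp$-locus conditions by Proposition \ref{Dis} and definability. It remains to encode properness of $f_{F,y}$ on $\Supp(u_{E_{F,y}})$. By Lemma \ref{lem:add} and Definition \ref{def:support}, the zero set $Z_y$ of $u_{E_{F,y}}$ consists of those $x \in W_{1,F,y}$ for which there exists $r \in \ZZ$ with $B_r(x) \cap W_{1,F,y}$ compact and $E_F(y,x',r') = 0$ for every $x' \in B_r(x) \cap W_{1,F,y}$ and every $r' \geq r$; this is a $\cCexp$-locus condition in $(y,x)$ by Proposition 1.3.1 of \cite{CGH5}. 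Properness on the support is then expressed as
$$\forall r_0 \in \ZZ\; \exists s_0 \in \ZZ\; \forall x \in W_{1,F,y}:\; \bigl(f(x) \in B_{r_0}(0) \wedge \ord(x) < s_0\bigr) \Rightarrow x \in Z_y.$$
The inner implication combines a definable hypothesis (whose negation is again definable, hence $\cCexp$-locus) with a $\cCexp$-locus conclusion via disjunction, so it is itself a $\cCexp$-locus condition; closure of $\cCexp$-loci under $\ZZ$-valued quantifiers finishes this part.

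For the construction of $f_*(E)$, fix $y \in \mathrm{Prop}(f,E,Y)_F$, $z \in W_{2,F,y}$, and $r \in \ZZ$ with $B_r(z) \cap W_{2,F,y}$ compact. Properness lets us evaluate $u_{E_{F,y}}$ on $\11_{f_{F,y}^{-1}(B_r(z)) \cap W_{1,F,y}}$, yielding the push-forward $B$-function at $(z,r)$. For $\rho \in \ZZ$ sufficiently large (depending on $y,z,r$) the preimage $f_{F,y}^{-1}(B_r(z)) \cap W_{1,F,y}$ is a disjoint union of balls of valuative radius $\rho$, and additivity of the $B$-function gives
$$u_{E_{F,y}}\bigl(\11_{f_{F,y}^{-1}(B_r(z)) \cap W_{1,F,y}}\bigr) = q_F^{\rho \ell_y} \int_{f_{F,y}^{-1}(B_r(z)) \cap W_{1,F,y}} E_F(y, x, \rho)\, \mu_{W_{1,F,y}},$$
with $\ell_y = \dim W_{1,F,y}$. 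By stability of $\cCexp$-class under integration (Theorem 4.1.1 of \cite{CHallp}), together with an auxiliary $\cCexp$-function marking finiteness of the integral (as in the proof of Proposition \ref{Dis}), the right-hand side defines a $\cCexp$-function $H(y,z,r,\rho)$. Passing to the limit $\rho \to +\infty$ via Theorem 3.1.3(2) of \cite{CGH5} produces $f_*(E) \in \cCexp(W_2 \times \ZZ)$ with the required property on $\mathrm{Prop}(f,E,Y)_F$.

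The main obstacle is to justify the integration formula uniformly in the parameters. The key point is that properness of $f_{F,y}$ on $\Supp(u_{E_{F,y}})$ makes $f_{F,y}^{-1}(B_r(z)) \cap \Supp(u_{E_{F,y}})$ compact, and for $\rho$ large enough the integrand $E_F(y,x,\rho)$ vanishes whenever $B_\rho(x) \cap W_{1,F,y}$ is disjoint from $\Supp(u_{E_{F,y}})$, so the integrand is effectively compactly supported and the formula stabilizes in $\rho$. The passage to the limit is then handled within the $\cCexp$-class machinery by Theorem 3.1.3(2) of \cite{CGH5}.
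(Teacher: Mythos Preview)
Your overall strategy is sound, and both halves can be made to work, but there is a genuine gap in the justification of the first half as written. You assert that ``closure of $\cCexp$-loci under $\ZZ$-valued quantifiers'' handles the formula
\[
\forall r_0\;\exists s_0\;\forall x:\; \bigl(f(x)\in B_{r_0}(0)\wedge \ord(x)<s_0\bigr)\Rightarrow x\in Z_y.
\]
But $\cCexp$-loci are \emph{not} closed under existential quantifiers over $\ZZ$: Proposition~1.3.1 of \cite{CGH5} only gives closure under $\forall$, and the paper explicitly notes (after Theorem~\ref{Smo}) that complements of $\cCexp$-loci need not be $\cCexp$-loci, so one cannot reduce $\exists$ to $\neg\forall\neg$. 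What saves your formula is that the inner condition is \emph{monotone} in $s_0$ (if it holds for some $s_0$, it holds for every smaller one), so $\exists s_0$ may be replaced by the ``for all sufficiently small $s_0$'' quantifier, which \emph{is} handled by Proposition~1.4.1 of \cite{CGH5} (as used in the proof of Proposition~\ref{alpha+-:p}). You should invoke this explicitly rather than a false general closure principle.

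For comparison, the paper takes a different route on both halves. To show that $\mathrm{Prop}(f,E,Y)$ is a $\cCexp$-locus, the paper introduces an auxiliary definable exhaustion function $\mu_{F,y}\colon W_{1,F,y}\to\NN$ with compact fibres and rephrases properness as boundedness of $\mu_{F,y}$ on $f^{-1}(B)\cap\Supp(u_{F,y})$; this again uses Proposition~1.4.1 but avoids writing a first-order formula for properness directly. For the construction of $f_*(E)$, the paper simply observes that the family $\varphi_{y,z,r}=\11_{B_r(z)\cap W_{2,F,y}}\circ f_{F,y}$ is a $\cCexp$-class family of Schwartz--Bruhat functions and applies Proposition~\ref{Julia'sdef:p} in one step. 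Your integration-and-limit argument is essentially a reproof of Proposition~\ref{Julia'sdef:p} with an extra limit in $\rho$ (via Theorem~3.1.3(2) of \cite{CGH5}); it works, but it is more circuitous, and the ``effectively compactly supported integrand'' step deserves a slightly sharper formulation (you need that the support of $x\mapsto E_F(y,x,\rho)$ inside $f^{-1}(B_r(z))$ has finite measure for large $\rho$, not just that the integrand vanishes off the $\rho$-neighbourhood of $\Supp(u)$).
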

\begin{proof} Write $u_{F,y}$ for $u_{E_{F,y}}$.
The fact that $f_*(E)$ exists and is of $\cCexp$-class is easy to see. Indeed, for $F$ in $\Loc'_{\gg 1}$, $y$ in ${\rm Prop}(f,E,Y)_F$, $x\in W_{2,F,y}$, and $r\in\ZZ$, put
$$
f_*(E)_{F,y}(x,r) = u_{F,y}(\11_{B_r(x) \cap W_{2,F,y} } \circ f_{F,y})
$$
whenever  $\11_{B_r(x) \cap W_{2,F,y} } \circ f_{F,y}$ is a Schwartz-Bruhat function on $W_{2,F,y}$.
Then $f_*(E)$ is of  $\cCexp$-class by the second part of Proposition \ref{Julia'sdef:p}.

We now show that the collection of sets ${\rm{Prop}}(f,E,Y)_F$ for $F\in \Loc'_{\gg 1}$ is a $\cCexp$-locus.
 %under the extra condition that $X_t$ is closed in $K^n$ for each $t$.
%Note that a continuous map $f:A\subset K^n\to A'\subset K^m$, with $A$ closed in $K^n$, is proper if and only if for any compact $C\subset A'$, one has that $f^{-1}(C)$ is bounded in $K^n$. Indeed, $C$ and $f^{-1}(C)$ are closed when $C$ is compact, and bounded and closed implies compact. Also note that if $A'$ is open, then any compact set $C\subset A'$ is a finite union of balls in $K^m$.
%Note that the support of $u_t$ is a closed set, and so is its intersection with $X_t$. Thus,
%$t\in {\rm{Prop}}(f,D,T)$ if and only if
By the continuity of $f_{F,y}$, for any ball $B$ in $F^{n_2}$ such that $W_{2,F,y}\cap B$ is compact one has that $f_{F,y}^{-1}(W_{2,F,y}\cap B)$  is closed in $W_{1,F,y}$ for the subset topology on $W_{1,F,y}\subset F^{n_1}$. Hence, also the intersection of $f_{F,y}^{-1}(W_{2,F,y}\cap B)$ with the support of $u_{F,y}$ is closed in $W_{1,F,y}$; let us denote this intersection by $A_{y,B}$. Let $\mu_{F,y}:W_{1,F,y}\to \NN$ be a definable function such that, for all $a\in \NN$, $\mu_{F,y}^{-1}(a)$ is compact, and such that $\bigcup_{a\in\NN} \mu_{F,y}^{-1}(a)$ equals $W_{1,F,y}$. We use the following properties:

1) For $x\in W_{1,F,y}$, to lie in the complement of the support of $u_{F,y}$ is a $\cCexp$-locus condition on $x$ and $y$, and hence, to be in the complement of $A_{y,B}$ is also a $\cCexp$-locus condition.

2) Whether for all balls $B$ such that $W_{2,F,y}\cap B$ is compact,
the set $\mu_{F,y}(A_{y,B}) \subset \NN$ is bounded by a constant $C(B) \in \NN$
is a $\cCexp$-locus condition.

3) The map $f_{F,y}$ is proper on the support of $u_{y}$ if and only if for all $B$ such that $W_{2,F,y}\cap B$ is compact,
the set $\mu_{F,y}(A_{y,B}) \subset \NN$ is bounded by a constant $C(B) \in \NN$.

To see why 1) holds, one notes that $x\in W_{1,F,y}$ lies in the complement of the support of $u_{F,y}$ if and only if $E_{F,y}(x,r)=0$ for all large enough integers $r$. Now 1) follows from Proposition 1.4.1 of \cite{CGH5}.  To show 2), one uses Proposition 1.4.1 of \cite{CGH5} once more, in combination with 1). Property 3) is purely topological and is left to the reader.

Now the claim follows using the formalism from \cite{CGH5}, namely as follows.
Since one can also eliminate universal quantifiers by Proposition 1.3.1(3) of \cite{CGH5}, and since quantifying over all balls $B$ such that $W_{2,F,y}\cap B$ is compact can be done definably with universal quantifiers (by using a tuple of variables running over $B$ and a variable for its radius), it follows from 1), 2) and 3) that the collection ${\rm{Prop}}(f,E,Y)_F$ is a $\cCexp$-locus.
%Indeed, $f_{F,y}$ is proper on the support of $u_{F,y}$ if and only if for all balls $B$ such that $W_{2,F,y}\cap B$ is compact one has that $A_{y,B}$ is compact. But $A_{y,B}$, being closed, is compact if and only if it is contained in a finite union of the sets $\mu_{F,y}^{-1}(a)$.
 \end{proof}

%\red{Restate (and amend with details) Heifetz' pull back result here? Do it also in positive characteristic if possible ? If not possible,  then only do this in the definable way, and only for large characteristic, by transfer.  }

\begin{example}\label{ex:smooth}
We give an example which shows that the wave front set (of a distibution of $\cCexp$-class) is in general far from a definable set. The situation is even worse, as we show that even the smooth locus is not a definable set in general.

Let $G$ be any function in $\cCexp(\VF^n)$ %and let $H$ be the function in $\cCexp(\VF)$ sending $x$ to $1$ if $x$ has even valuation, and to zero otherwise. Finally,
%let $f$ be the function $G(x)H(y)$ in $\cCexp(\VF^2)$, with coordinates $(x,y)$ on $\VF^2$.
Suppose that $G_F$ is locally integrable over $F^n$ for each $F\in \Loc'_{\gg 1}$. Then, for each $F\in \Loc'_{\gg 1}$, we can consider $G_F$ as a distribution on $F^n$.
For each $F\in \Loc'_{\gg 1}$, let $Y_F$ be the locus where $G_F$ is locally constant. Note that in general the collection $Y_F$ for $F\in \Loc'_{\gg 1}$ is not a definable set at all. For example, one can take $n=3$ and $G_F(x,y,z) = (q_F^{\ord x}  - \ord y) \ord z$ for nonzero $(x,y,z)\in F^3$, extended by $0$.
Nevertheless, the smooth locus of $G_F$ is precisely $Y_F$.

It would be interesting to characterise exactly the sets that can appear as the sets of smooth points, and as the sets of $\Lambda$-micro-locally smooth points. So far, we know by Theorem \ref{Smo} that they are zero loci of functions of $\cCexp$-class (with conic structure).

\blue{Also, it would be interesting, for  wave front sets of special distributions, to check whether they are definable sets, like for push-forwards under polynomial mappings of smooth distributions, their Fourier transforms, etc. Note that the wave front set of a push-forward of a smooth distribution under a polynomial mapping $f$ is contained in $N_f$ by Theorem \ref{thm:push:WF} and that $N_f$ itself is a definable set. }
\end{example}

\section{$\WF$-holonomicity}\label{sec:WF} %and essential singularities}

\subsection{}\label{sec:WF1}
In this section we elaborate on a notion of \blue{algebraic} $\WF$-holonomicity introduced by Aizenbud and Drinfeld in \cite{AizDr}.
We rephrase a question by Aizenbud and Drinfeld for future research, based on Remark 3.2.2 and the surrounding text of \cite{AizDr}, and we make a first step in its direction in Theorem \ref{thm:AizDrthmA}.

We first adapt the definition of \cite[Section 3.2, 3.2.1 and 3.2.2]{AizDr} of \blue{algebraic} $\WF$-holonomicity, to the situation of any non-archimedean local field $F$ (of arbirary characteristic).

Say that a Zariski closed subset $C$ of $\AA^n$ has dimension $-\infty$ if it is empty, and has dimension $i$ for some $i\geq 0$ when one of the irreducible components of $C$ has dimension $i$ and any irreducible component of $C$ has dimension $\leq i$.

\begin{defn}\label{def:WF-hol}
Let $F$ be in $\Loc$ and let $X\subset F^m$ be a strict $C^1$ variety of dimension $n$. Let $u$ be a distribution on $X$, and let $\WF(u)$ be its $F^\times$-wave front set.

Say that $u$ is \blue{algebraically} $\WF$-holonomic if \blue{$X=\cX(F)$ for some smooth subvariety $\cX$ of $\AA^m_F$ and there are finitely many smooth locally closed subvarieties $\cY_i$ of $\cX$ such that $\WF(u)$ is contained in
$$
\bigcup_i CN_{\cY_i}^{\cX}(F),
$$
where $CN_{\cY_i}^{\cX}$ is the co-normal bundle of  $\cY_i\subset \cX$.}

\blue{Say that $u$ is $F$-analytically $\WF$-holonomic if $X$ is an $F$-analytic submanifold of $F^m$ and there are finitely many $F$-analytic submanifolds $Y_i$ of $X$ such that $\WF(u)$ is contained in the union over $i$ of the co-normal bundles $CN_{Y_i}^X$ of  $Y_i\subset X$.

Say that  $u$ is strictly $C^1$ $\WF$-holonomic if there are finitely many strict $C^1$ subvarieties $Y_i$ of $X$ such that $\WF(u)$ is contained in the union of the co-normal bundles  $CN_{Y_i}^X$ of $Y_i\subset X$.}
\end{defn}

In \cite{AizDr}, the question is considered when the Fourier transform of a $\WF$-holonomic distribution is again a $\WF$-holonomic distribution, and for some distributions this property is shown. These distributions of \cite{AizDr} are all of $\cCexp$-class (up to working with charts to make the varieties affine), and it seems sensible to explore the mentioned question of \cite{AizDr} in the context of distributions of $\cCexp$-class, where more geometric tools are available than for abstract distributions, and which are stable under Fourier transforms by Theorem \ref{Fourier:p}.
%We do note know yet when the Fourier transform of a $\WF$-holonomic distribution is again a $\WF$-holonomic distribution for $\cCexp$-class distributions, but
The following generalization of Theorem A of \cite{AizDr} to general distributions of $\cCexp$-class may be a first step towards this question. %Note that a similar result is sketched in \cite{HK}.

\begin{thm}\label{thm:AizDrthmA}
Any $\cCexp$-class distribution is smooth on the complement of a proper Zariski closed subset. In more detail, let $Y$ and $W\subset Y\times \VF^m$ be definable sets for some $m\geq 0$ such that $W_{F,y}$ is a strict $C^1$ manifold of dimension $n$ for each $F$, each $y\in Y_F$ and some $n\leq m$. Let $E$ be in $\cCexp(W\times \ZZ)$. Then there are $G$ in $\cCexp(W)$ and a definable set $C\subset Y\times \VF^m$ such that for each $F$ in $\Loc'_{\gg 1}$ and for each $y\in Y_F$, the set $C_{F,y}$ is Zariski closed in $F^m$ of dimension at most $n-1$, the function $G_{F}(y,\cdot)$ is locally constant on $W_{F,y}\setminus C_{F,y}$, and, for $y\in {\rm Dis}(E,Y)_F$,
the distribution $u_{F,y}$ associated to $E_{F,y}$ is smooth on $W_{F,y}\setminus C_{F,y}$ and moreover represented by $G_{F}(y,\cdot)$ on $W_{F,y}\setminus C_{F,y}$. Moreover, in the case that $Y$ is a definable subset of $\VG^N\times \RF_{N}^N$ for some $N\geq 0$, there is a single Zariski closed subset $C$ of $\AA^{m}_\ZZ$ such that one can take for the $C_{F,y}$ the set $C(F)$ (independently from $y$).
\end{thm}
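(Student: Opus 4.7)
The plan is to define $G$ as a pointwise scaling limit of $E$, recognize it as the unique candidate for a representing function, identify the ``bad locus'' where smoothness or the representation fails as a lower-dimensional definable set, and finally enclose that bad locus in an ambient algebraic variety.

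First I would set
$$
G(y,x) := \lim_{r\to+\infty} q^{nr}\, E(y,x,r),
$$
interpreted as zero whenever the limit does not stabilize. By the limits theorem for $\cCexp$-class functions (Theorem 3.1.3(2) of \cite{CGH5}), this $G$ lies in $\cCexp(W)$. This is the only possible candidate: if $u_{F,y}$ is smooth at $x$ and represented by a locally constant $h$, then for $r$ large enough that $W_{F,y}\cap B_r(x)$ sits inside a constancy ball of $h$ and equals, via a coordinate-projection chart of Remark \ref{rem:isometry}, a ball of valuative radius $r$ in $F^n$, one has $E_F(y,x,r) = h(x)\, q_F^{-nr}$, which forces $G_F(y,x)=h(x)$.

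Next I would define the ``good locus'' $A\subset W$ to consist of tuples $(y,x)$ with $y\in{\rm Dis}(E,Y)$ such that the defining limit for $G(y,x)$ stabilizes, $G_F(y,\cdot)$ is locally constant at $x$ on $W_{F,y}$, and for all sufficiently large $r$ the identity $E_F(y,x,r) = q_F^{-nr}G_F(y,x)$ holds. Each clause is a $\cCexp$-locus condition by the formalism of \cite[Section~1.2]{CGH5}, using in particular the sufficiently-large-quantifier elimination of \cite[Proposition~1.4.1]{CGH5}. By the additivity of $B$-functions (Lemma \ref{lem:add}), on every ball contained in $A_{F,y}$ the distribution $u_{F,y}$ coincides with the distribution obtained by integration against $G_F(y,\cdot)$, so on $A_{F,y}$ the distribution $u_{F,y}$ is smooth and represented by $G_F(y,\cdot)$.

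The crux is then showing that for each $F$ and each $y\in {\rm Dis}(E,Y)_F$, the complement $B_{F,y}:=W_{F,y}\setminus A_{F,y}$ is a definable subset of $F^m$ of dimension at most $n-1$, and is contained in a Zariski closed subset $C_{F,y}\subset F^m$ of the same dimension bound, definably in $y$. The dimension bound is extracted from the cell decomposition underlying the construction of $\cCexp$-class functions in the Denef--Pas language (cf.~\cite{CHallp}): the locus where $G$ fails to be locally constant, or where the scaled $B$-value fails to stabilize, cannot be open in $W_{F,y}$ (else a ball of $W_{F,y}$ would not admit the asserted constant value of the rescaled $B$-function, contradicting the definition of $A$), hence is a proper sub-cell-structure in the strict $C^1$ manifold $W_{F,y}$ of dimension $n$, of dimension $<n$. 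To pass to a Zariski closed superset one uses the standard feature of Denef--Pas cell decomposition that every definable subset of $F^m$ of dimension $\leq n-1$ lies in the zero locus of a nonzero $\VF$-polynomial, and parametric cell decomposition yields a finite list of such polynomials with coefficients depending definably on $y$, whose joint zero locus defines $C_{F,y}$. For the final uniformity statement when $Y\subset\VG^N\times\RF_N^N$, one refines this observation: in this case the $\VF$-polynomial shapes cutting out the cells are drawn from a finite list defined over $\ZZ$, since $\VG$- and $\RF$-parameters do not enlarge the $\VF$-algebraic structure underlying the cells (they only adjust valuations and angular components); taking the union of the resulting finite list of $\ZZ$-subvarieties of $\AA^m$ gives the single $C\subset\AA^m_\ZZ$ of dimension $\leq n-1$ independent of $F$ and $y$. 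The main obstacle I expect is the careful extraction of uniformly bounded polynomial data from the parametric cell decomposition in the last paragraph; once that is in hand, the rest is a straightforward assembly of the ingredients developed in \cite{CGH5}, \cite{CHallp}, and Section \ref{sec:distCexpclass}.
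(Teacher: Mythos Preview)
Your definition of $G$ via the limit $\lim_{r\to+\infty} q^{nr}E(y,x,r)$ using Theorem~3.1.3(2) of \cite{CGH5} is exactly the paper's choice, and the overall architecture---identify the candidate representing function, isolate a bad locus, enclose it algebraically---matches the paper's.

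The gap lies in your treatment of the bad locus $B_{F,y}=W_{F,y}\setminus A_{F,y}$. You assert that $B_{F,y}$ is a \emph{definable} subset of $F^m$, but you have only shown $A$ to be a $\cCexp$-locus, and complements of $\cCexp$-loci need not be definable (this is precisely the phenomenon behind Example~\ref{ex:smooth}). Your parenthetical dimension argument is also circular: if $B_{F,y}$ contained a ball, that would simply mean the rescaled $B$-values fail to stabilize on that ball, which contradicts nothing already established---it is exactly the statement whose impossibility you are trying to prove. Gesturing at ``cell decomposition'' is the right instinct but not a proof.

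The paper closes this gap by invoking Theorem~4.4.3 of \cite{CHallp} (together with the remark below its proof, charts via coordinate projections, and quantifier elimination from \cite{Rid}) \emph{directly on the $\cCexp$-class functions $G$ and $E$}, rather than on the locus $A$. That theorem produces, for a function of $\cCexp$-class on $W$, a definable family of Zariski closed sets $C_{F,y}\subset F^m$ of dimension at most $n-1$ outside which the function is locally constant. Applied to $G$ this gives a set $C_1$; applied so as to find where $r\mapsto E(y,x,r)$ is locally independent of $x$ gives $C_2$, on whose complement the additivity of the $B$-function forces $E_F(y,x,r+1)=q_F^{-n}E_F(y,x,r)$ for large $r$, so the limit defining $G$ exists there. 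Then $C=C_1\cup C_2$ works, and the final uniformity for $Y\subset\VG^N\times\RF_N^N$ follows from valued field quantifier elimination, essentially as you sketch.
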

\begin{proof}
Let $G$ be in $\cCexp(W)$ such that $G_F(y,x)$ equals the limit for $r\to\infty$ of
$$
q_F^{nr}\cdot E_{F}(y,x,r)
$$
whenever this limit exists in $\CC$. Here, $F$ runs over $\Loc'_{\gg 1}$, $y$ over $Y_F$, and $x$ over $W_{F,y}$. Such $G$ exists by Theorem 3.1.3(2) of \cite{CGH5}.
%Moreover, by (1) of Theorem 3.1.3 of \cite{CGH5}, the limit exists precisely when $g_{F}(y,x)=0$ for some $g\in \cCexp(W)$.
 By Theorem 4.4.3 of \cite{CHallp}, the remark below its proof, by using charts (as at the end of the proof of Theorem \ref{Smo}), and by quantifier elimination result in the generalized Denef-Pas language from \cite{Rid}, there is a definable set $C_1 \subset Y \times \VF^m$ such that for every $F\in \Loc'_{\gg 1}$ and every $y\in Y_F$, the set $C_{1,F,y}$ is Zariski closed in $F^m$, has dimension at most $n-1$, and such that $G_{F}(y,\cdot)$ is locally constant on $W_{F,y}\setminus C_{1,F,y}$.
Similarly, there is a definable set $C_2 \subset Y \times \VF^m$ such that for every $F\in \Loc'_{\gg 1}$ and every $y\in Y_F$, the set $C_{2,F,y}$ is Zariski closed in $F^m$, has dimension at most $n-1$, and such that the function sending $r\gg 1$ to $E_{F}(y,x,r)$ is locally independent of $x$ in $W_{F,y}\setminus C_{2,F,y}$. By additivity of distributions (and since $E_{F,y}$ is a $B$-function), this implies, for $y\in {\rm Dis}(E,Y)_F$,  $x\in W_{F,y}\setminus C_{2,F,y}$ and for sufficiently large $r$, that
$$
E_{F}(y,x,r+1) = q_F^{n}\cdot E_{F}(y,x,r),
$$
and hence the above limit for $r\to\infty$  exists for all $x\in W_{F,y}\setminus C_{2,F,y}$. Now take $C_{F,y}$ to be the union of $C_{1,F,y}$ and $C_{2,F,y}$ to finish the proof, where the final simplified form for the $C_{f,y}$ follows again from the valued field quantifier elimination in the generalized Denef-Pas language from \cite{Rid}.
\end{proof}

\blue{
\subsection{Loci, their complements, and dimension}

Note that $\cCexp$-loci (and also complements of $\cCexp$-loci) are more general than definable sets, and that their study is more subtle than that of definable sets (the latter situation is well understood by the quantifier elimination result for definable sets in the generalized Denef-Pas language from \cite{Rid}, see also Theorem 5.1.2 of \cite{CHallp}, generalizing \cite{Pas} and \cite{Denef2}).
We continue this section with a study of dimensions of $\cCexp$-loci and their complements.

\begin{def-prop}[Dimensions of loci]\label{prop:localstructure}
Let $f$ be in $\cCexp(\VF^n)$. Fix $F$ in $\Loc'_{\gg 1}$. Write $A_1\subset F^n$ for the zero locus of $f_F$, and, $A_2$ for the complement of $A_1$ in $F^n$. Then there exist finitely many strict $C^1$-submanifolds $W_i$ of $F^n$ such that $A_1$ (resp.~$A_2$)  equals  $\bigcup_i W_i$. Moreover, for each $x\in W_i$ for any $i$, there exists an open neighborhood $U$ of $x$ in $F^n$ such that $W_i\cap U$ is $\gLPas(F)$-definable. Let $m$ be the maximum of the dimensions of the $W_i$, then we call $m$ the dimension of $A_1$ (resp.~of $A_2$). Finally, the Zariski closure of $\bigcup_i W_i$ in $F^n$ has dimension $m$ as well.
\end{def-prop}
\begin{proof}
Given $f$ and $F$, there are finitely many $\gLPas(F)$-definable (that is, definable with parameters from $F$) pieces $D_i\subset F^n$ whose union equals $F^n$, such that each $D_i$ is a strict $C^1$ manifold, and such that the restriction of $f_F$ to $D_i$ is locally constant. (This follows from cell decomposition and the definition of functions of $\cCexp$-class, and it requires $F$ to be fixed.)
Take $x\in A_1\cap D_i$ (resp.~in $A_2\cap D_i$). Let us put $W_i$ equal to $A_1\cap D_i$ (resp.~$A_2\cap D_i$). Since $f_F$ is locally constant on $D_i$, and since $D_i$ is a strict $C^1$ manifold, the $W_i$ are  clearly strict $C^1$ submanifolds. Similarly, for any $x\in W_i$, one clearly has that there exists an open neighborhood $U$ of $x$ in $F^n$ such that $W_i\cap U$ is definable. Again by the local constancy on $D_i$, either $W_i$ is empty, or, the dimensions of $D_i$ and $W_i$ coincide. Since $D_i$ is $\gLPas(F)$-definable, it is contained in a Zariski closed of the same dimension by the quantifier elimination result for definable sets in the generalized Denef-Pas language from \cite{Rid}.
\end{proof}
}

\begin{prop}\label{Zar}
Let $g$ be a function in $\cCexp(X)$ for some definable set $X\subset Y\times \VF^\ell$, where $Y$ is a definable set, and let \blue{$k \le \ell$} be given. For each $F$ in $\Loc'_{\gg 1}$, write $A_{F}$ for the complement of the zero locus of $g_F$ in $X_F$. For each $y\in Y_F$, write $Z_{F,y}$ for the Zariski closure of $A_{F,y}$ in $F^\ell$.
Then there exists a definable set $C\subset Y\times \VF^\ell$ such that for each $F$ in $\Loc'_{\gg 1}$ and for each $y\in Y_F$, the set $C_{F,y}$ is Zariski closed in $F^\ell$ of dimension at most $k$, and if $\dim Z_{F,y} \le k$, then $Z_{F,y} \subset C_{F,y}$.
In particular, the set of $y$ for which we have $\dim Z_{F,y} \le k$ is a $\cCexp$-locus.
\end{prop}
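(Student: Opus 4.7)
The plan is to construct $C$ via an inductive cell-decomposition argument, from which the ``in particular'' clause will follow easily. Granted such a $C$, the condition $\dim Z_{F,y}\le k$ is equivalent to $A_{F,y}\subset C_{F,y}$: one direction holds by construction, and the other because $C_{F,y}$ is Zariski closed of dimension $\le k$, so the Zariski closure $Z_{F,y}$ of $A_{F,y}$ sits inside $C_{F,y}$. This in turn is equivalent to the vanishing of the $\cCexp$-class function $h(y,x):=g(y,x)\cdot\mathbf{1}_{X\setminus C}(y,x)$ on all of $X_{F,y}$, and is therefore a $\cCexp$-locus condition on $y$ by elimination of the universal quantifier (Proposition~1.3.1 of \cite{CGH5}).

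For the construction of $C$, I would proceed by induction on the (uniformly bounded) fiberwise Zariski dimension of the ambient definable set. The key ingredient is a parametrized cell decomposition of $X$ adapted to $g$: combining the cell decomposition in the generalized Denef--Pas language with Theorem~4.4.3 of \cite{CHallp} applied parametrically in $y$, and absorbing any auxiliary residue field and value group cell parameters into an enlarged $Y$, one obtains finitely many definable subsets $D_1,\ldots,D_N\subset X$ together with a definable $E\subset X$ such that for each $F\in\Loc'_{\gg 1}$ and each $y\in Y_F$:
\begin{itemize}
\item[(i)] $X_{F,y}$ is the disjoint union of $E_{F,y}$ and the $D_{i,F,y}$;
\item[(ii)] $E_{F,y}$ is Zariski closed in $F^\ell$ of dimension strictly less than that of $X_{F,y}$;
\item[(iii)] on each $D_{i,F,y}$ the function $g_F(y,\cdot)$ is either identically zero or nowhere vanishing.
\end{itemize}
By Rideau's quantifier elimination \cite{Rid}, each $D_i$ has a definable fiberwise Zariski closure $\bar D_i\subset Y\times\VF^\ell$ whose fiberwise dimension is a definable function of $y$.

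I then define the definable set
\[
C_1 \;:=\; \bigcup_{i=1}^{N}\bigl\{(y,x)\in\bar D_i : \dim \bar D_{i,F,y}\le k\bigr\}.
\]
Fiberwise, $C_{1,F,y}$ is a finite union of Zariski closed sets of dimension $\le k$, hence itself Zariski closed of dimension $\le k$. If $\dim Z_{F,y}\le k$, then for any $i$ such that $g_F(y,\cdot)$ is nowhere zero on $D_{i,F,y}$ we have $D_{i,F,y}\subset A_{F,y}$, so $\bar D_{i,F,y}\subset Z_{F,y}$, forcing $\dim\bar D_{i,F,y}\le k$ and thus $\bar D_{i,F,y}\subset C_{1,F,y}$. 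Consequently $A_{F,y}\setminus E_{F,y}\subset C_{1,F,y}$. The remaining part $A_{F,y}\cap E_{F,y}$ is handled by applying the inductive hypothesis to the restriction of $g$ to the definable set $E$, whose fibers have strictly smaller Zariski dimension; this produces a definable $C_2$ of the desired form. Setting $C:=C_1\cup C_2$ yields the required set.

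The main obstacle is to secure the adapted cell decomposition with property (iii), namely that $g$ is \emph{constant} on each cell $D_i$ (not merely locally constant), after removing the lower-dimensional exceptional set $E$. Because $\cCexp$-class functions involve additive characters $\psi$, which are in general only locally constant, one has to refine the cell decomposition using the angular component maps $\ac_n$ for $n$ large enough to separate the character values appearing in $g$, at the cost of possibly enlarging the parameter space by further residue field variables. The tools of \cite{CHallp} together with \cite{Rid} appear adequate for this step; once the decomposition is in place, the induction on dimension is routine.
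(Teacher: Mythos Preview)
Your derivation of the ``in particular'' clause from the existence of $C$ is correct and matches the paper. The gap is in property~(iii) of your adapted decomposition: it is not achievable by a finite \emph{definable} partition, and the fix you propose in your last paragraph does not repair this.

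The obstruction is not the additive characters but the ``constructible'' part of $\cCexp$-class functions. Take $Y$ a point, $X=(\VF^\times)^2$, and $g(x_1,x_2)=q^{\ord x_1}-\ord x_2$, which lies in $\cCexp(X)$ without any $\psi$ at all. Its zero locus is $\{(x_1,x_2):\ord x_2=q_F^{\ord x_1}\}$. Since $n\mapsto q_F^{\,n}$ is not Presburger-definable, this set is not definable in the generalized Denef--Pas language even for a single fixed $F$; in particular no finite definable partition $\{D_i\}$ can have $g$ either identically zero or nowhere zero on each piece outside a lower-dimensional $E$. Refining by $\ac_n$ is irrelevant here because $g$ depends only on valuations. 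This is exactly the phenomenon underlying Example~\ref{ex:smooth}: zero loci of $\cCexp$-class functions are $\cCexp$-loci, which are strictly more general than definable sets.

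The paper avoids this altogether. Rather than making $g$ constant on pieces, it only uses that $g_{F,y}$ is \emph{locally constant} off a Zariski closed $C_{F,y}\subset F^\ell$ of dimension $\le \ell-1$ (Theorem~4.4.3 of \cite{CHallp}). Then $A_{F,y}\setminus C_{F,y}$ is open in $F^\ell$ (in the valuation topology); if nonempty it contains a ball, hence is Zariski dense in $F^\ell$, forcing $\dim Z_{F,y}=\ell$. So whenever $\dim Z_{F,y}\le\ell-1$ one has $A_{F,y}\subset C_{F,y}$ and thus $Z_{F,y}\subset C_{F,y}$. For $k<\ell-1$ one restricts $g$ to this $C$, passes to coordinate-projection charts on the $(\ell-1)$-dimensional set, and inducts on $\ell$. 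Your induction scheme can be salvaged by replacing the unattainable global constancy of~(iii) with this local-constancy-plus-topology step.
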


%\begin{prop}\label{Zar}
%Let $g$ be a function in $\cCexp(X)$ for some definable set $X\subset Y\times \VF^m$, where $Y$ is a definable set, and let $n \le m$ be given. For each $F$ in $\Loc'_{\gg 1}$, write $A_{F}$ for the complement of the zero locus of $g_F$ in $X_F$. %For each $y\in Y_F$, write $Z_{F,y}$ for the Zariski closure of $A_{F,y}$ in $F^m$.
%Then there exists a definable set $C\subset Y\times \VF^m$ such that for each $F$ in $\Loc'_{\gg 1}$ and for each $y\in Y_F$, the set $C_{F,y}$ is Zariski closed in $F^m$ of dimension at most $n$, and if the dimension of the Zariski closure of $A_{F,y}$ in $F^m$ is at most $n$, then $A_{F,y} \subset C_{F,y}$.
%In particular, the set of $y$ for which we have that the dimension of the Zariski closure of $A_{F,y}$ is bounded by $n$ is a $\cCexp$-locus.
%\end{prop}

%Say that an aribtrary set $A\subset F^n$ has dimension $i$ if its Zariski closure in $\AA^n$ has dimension $i$. Note that this is compatible with the notion of dimension of strict $C^1$-manifolds from Section \ref{sec:non-arch} if the manifold is moreover definable. Indeed, by the quantifier elimination result in the generalized Denef-Pas language from \cite{Rid} (see also Theorem 5.1.2 of \cite{CHallp}), for a definable set $W\subset \VF^n$ and any $F\in \Loc'_{\gg 1}$, if $W_F$ is a strict $C^1$ manifold then it has dimension $i$ as strict $C^1$ manifold if and only if the Zariski closure of $W_F$ in $\AA^n$ has dimension $i$.

\begin{proof}
We may assume that $X = Y \times \VF^\ell$, by extending $g$ by $0$ on $(Y \times \VF^\ell) \setminus X$.

First note that the ``in particular'' part indeed follows from the remainder of the proposition:
From $Z_{F,y} \subset C_{F,y} \iff \dim Z_{F,y} \le k$,
we obtain that $\dim Z_{F,y} \le k$ iff $g$ vanishes on the definable set $F^\ell \setminus C_{F,y}$. The set of $y$ for which this holds
is a $\cCexp$-locus by \cite[Proposition 1.3.1]{CGH5}.

Now consider the main part of the proposition.
For $k = m$, there is nothing to prove. (Set $C := Y \times \VF^\ell$.)

In the case $k = \ell - 1$, we use
Theorem 4.4.3 of \cite{CHallp} and the remark below its proof to find a definable set $C \subset Y \times \VF^\ell$ such that
for every $F\in \Loc'_{\gg 1}$ and every $y\in Y_F$, the set $C_{F,y}$ is Zariski closed in $F^\ell$, has dimension at most $\ell-1$, and
contains the set of non-local constancy of $g_{F,y}$.
%(The proof of Theorem 4.4.3 of \cite{CHallp} relies on the quantifier elimination result in the generalized Denef-Pas language from \cite{Rid}.)
Then clearly, if $Z_{F,y}$ has dimension at most $k$, it is contained in $C_{F,y}$.

For $k < \ell - 1$, we use the case $k=\ell-1$ to find a definable set $C_{\ell-1}\subset Y \times \VF^\ell$ and one finishes the proof by induction on $\ell$, by using the restriction of $g$ to $C_{\ell-1}$ and charts on $C_{\ell-1}$ which are coordinate projections (as at the end of the proof of Theorem \ref{Smo}), and using the quantifier elimination result in the generalized Denef-Pas language from \cite{Rid} to get to definable sets which are Zariski closed as desired.
\end{proof}

\subsection{Holonomicity}

The following result relates to Remark 3.2.2 of \cite{AizDr}, \blue{and is about witnessing algebraic holonomicity. Its first part states} that given a family of distributions $u_{F,y}$,
the set of $F$ and $y$ such that $u_{F,y}$ is \blue{strict $C^1$} $\WF$-holonomic is a locus sets. Further, the $u_{F,y}$ that are $\WF$-holonomic are even ``uniformly algebraically $\WF$-holonomic'' in the sense that their wave front sets are contained in \blue{a finite union of the co-normal bundles of uniformly definable algebraic submanifolds.
 Finally, it says that, for $\cCexp$-class distributions, to be algebraically $\WF$-holonomic is essentially the same as being strict $C^1$ $\WF$-holonomic.}

%A consequence is that if $y$ runs over the value group or over residue rings, then $C_y$ can even be chosen independently of $y$.

%in our context. %The point is that $C$ is defined over $\QQ$.

%\commentimmi{Writing $u_{F,y}$ (instead of $u_{F,y}$) for the distribution might be a bit confusing, if everywhere else, we do put the index $F$.}

%Locus contained in a definable set is a locus condition. %Take zariski closed set. Make conic, locally smooth, local dimension, isotropic.

%\blue{
%\begin{lem}
%Let $X$ be a definable subset of $\VF^m$.
%Consider a function $f$ in $\cCexp(X)$. Fix $F$ in $\Loc'_{\gg 1}$ of characteristic zero. Suppose that $X_F$ is a strict $C^1$ manifold of dimension $n$. Write $A_1\subset X_F$ for the zero locus of $f_F$, and, $A_2$ for the complement of $A_1$ in $X_F$. Suppose that there are finitely many strict $C^1$-manifolds $M_i$ of $X_F$ of dimension at most $n-1$ such that $A_1$ (resp.~$A_2$) is contained in $\cup_i M_i$. Then there exist finitely many algebraic subvarieties  $C_i\subset \VF^{m}$ of dimension at most $n-1$ such that $A_1$ (resp.~$A_2$) is contained in $\cup_i C_i(F)$.
%\end{lem}
%\begin{proof}
%
%
%\end{proof}
%}

\begin{prop}\label{thm:WFh}
Let $Y$ and $W\subset Y\times \VF^n$ be definable sets for some $n\geq 0$. Let $E$ be in $\cCexp(W\times \ZZ)$. For each $F$ in $\Loc'_{\gg 1}$ and each $y\in {\rm Dis}(E,Y)_F$, denote the distribution on $W_{F,y}$ associated to $E_{F,y}$ by $u_{F,y}$.
Define the sets, for $F$ in $\Loc'_{\gg 1}$,
$$
{\rm{Hol}}(E,Y)_F := \{y\in Y_F \mid y\in {\rm Dis}(E,Y)_F \mbox{  and }
%$$
%$$
\mbox{$u_{F,y}$ is \blue{strict $C^1$} $\WF$-holonomic} \}.
$$
Then ${\rm{Hol}}(E,Y)$ is a $\cCexp$-locus. Moreover, \blue{there exist finitely many definable subsets $X_i\subset W$ such that for each $F$ in $\Loc'_{\gg 1}$ and each $y\in {\rm Dis}(E,Y)_F$, each $X_{i,F,y}$ is the set of $F$-rational points on a smooth locally closed subvariety %$\mathcal X_{i,F,y}$
of $\AA_F^{n}$, $X_{0,F,y}$ has the same dimension as $W_{F,y}$, $X_{0,F,y}$ contains both $W_{F,y}$ and the $X_{i,F,y}$, and, $u_{F,y}$ is strict $C^1$ $\WF$-holonomic if and only if the $F^\times$-wave front set of $u_{F,y}$ is contained in
$$
\bigcup_i CN_{X_{i,F,y}}^{X_{0,F,y}}.
$$
Hence, for any $F$ in $\Loc'_{\gg 1}$ and any $y\in {\rm Dis}(E,Y)_F$,
%such that $W_{F,y}=\cX(F)$ for some smooth subvariety $\cX$ of $\AA^n_F$ (where $\cX$ may depend on $y$ and on $F$),
the distribution $u_{F,y}$ on $W_{F,y}$ is strict $C^1$ $\WF$-holonomic if and only if it can be extended to an algebraically $\WF$-holonomic distribution on $\cX(F)\supset W_{F,y}$ for some smooth algebraic subvariety $\cX$ of $\AA^n_F$ of the same dimension as $W_{F,y}$.
}
%Finally, in the case that $Y$ is a definable subset of $\VG^N\times \RF_{N}^N$ for some $N\geq 0$, there is a single Zariski closed subset $C$ of $\AA^{n+n}_\ZZ$ such that $C(F)$ is of dimension $n$ and such that $C(F)$ contains $C_{F,y}$ for all $F\in \Loc'_{\gg 1}$ and all $y\in Y_F$ such that $u_{F,y}$ is $\WF$-holonomic.
\end{prop}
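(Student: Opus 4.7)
My plan is to combine Theorem~\ref{Smo} (which describes $\WF(u_{F,y})$ as the complement of a $\cCexp$-locus) with Proposition~\ref{Zar} (uniform definability of algebraic Zariski closures of $\cCexp$-loci) and the quantifier elimination in the generalized Denef--Pas language from \cite{Rid}, in the spirit of the proof of Theorem~\ref{thm:AizDrthmA}. I would first invoke Theorem~\ref{Smo} with $\Lambda=\VF^\times$ to produce a $\cCexp$-class function $g$ on $Y\times\VF^{2n}$ whose non-zero locus, when intersected with the cotangent bundle of $W$, equals the family of $F^\times$-wave front sets $\WF(u_{F,y})$. For the family $X_0$, I would then argue as in the last part of the proof of Theorem~\ref{thm:AizDrthmA}, using \cite{Rid} together with Theorem~4.4.3 of \cite{CHallp}, to exhibit a definable family of smooth algebraic subvarieties of $\AA_F^n$ containing the $W_{F,y}$ as open subsets (which is possible since they share the same dimension). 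For the $X_i$ with $i\ge 1$, I would apply Proposition~\ref{Zar} to $g$ with the dimension parameter $k$ ranging over $0,1,\ldots,\dim W_{F,y}-1$, obtaining definable algebraic Zariski closures of the wave-front fibers and singular supports, and stratify these closures into smooth locally closed subvarieties of $X_{0,F,y}$, uniformly in the family by another application of \cite{Rid}.

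With the $X_i$ in hand, the key equivalence to establish is
\begin{equation*}
\WF(u_{F,y}) \;\subset\; \bigcup_i CN_{X_{i,F,y}}^{X_{0,F,y}} \quad\Longleftrightarrow\quad u_{F,y} \text{ is strict } C^1\ \WF\text{-holonomic}.
\end{equation*}
The ``$\Rightarrow$'' direction is immediate, since algebraic smooth subvarieties are strict $C^1$ and co-normal bundles in $X_0$ restrict naturally to those in the open subset $W_{F,y}\subset X_{0,F,y}$. For the ``$\Leftarrow$'' direction, assume $\WF(u_{F,y}) \subset \bigcup_j CN_{Y_j}^{W_{F,y}}$ for strict $C^1$ submanifolds $Y_j$. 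Each fiber $CN_{Y_j}^{W_{F,y}}|_x$ is a linear subspace of dimension $\dim W_{F,y}-\dim Y_j$, so the total dimension of $\WF(u_{F,y})$ is at most $\dim W_{F,y}$. By Proposition~\ref{Zar} applied to the $\cCexp$-class function describing the complement of the wave front set, its Zariski closure in $T^*X_{0,F,y}$ is therefore algebraic of dimension at most $\dim X_0$. Being closed and $F^\times$-conic in the fiber direction, this closure decomposes as a finite union of co-normal bundles of smooth algebraic subvarieties of $X_{0,F,y}$; the $X_i$ from the preceding paragraph, constructed to include all such candidate subvarieties (refining the stratification if necessary), witness the desired containment.

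Given the equivalence, $\mathrm{Hol}(E,Y)$ is a $\cCexp$-locus because the containment condition can be expressed as a $\cCexp$-condition: the wave front set is the non-zero locus of a $\cCexp$-function by Theorem~\ref{Smo}, the co-normal bundles $CN_{X_i}^{X_0}$ are definable families, and the condition ``non-zero locus contained in a definable set'' falls within the formalism of \cite[Proposition~1.3.1]{CGH5}. The algebraic extension claim then follows at once: the $X_i$ witnessing strict $C^1$ $\WF$-holonomicity are already smooth algebraic subvarieties of $\cX:=X_0$, so extending $u_{F,y}$ to $\cX(F)$ (using that $W_{F,y}$ is open in $X_{0,F,y}$) gives an algebraically $\WF$-holonomic distribution. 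The principal obstacle is the ``$\Leftarrow$'' direction of the main equivalence, namely decomposing an algebraic closed $F^\times$-conic subvariety of $T^*X_0$ of dimension at most $\dim X_0$ as a union of co-normal bundles of algebraic smooth subvarieties; this is a classical-style statement from algebraic microlocal analysis, and the technical challenge is to render the argument uniform in $F$ and $y$ using the definable stratification tools of \cite{Rid} and \cite{CHallp}.
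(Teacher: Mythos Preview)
Your overall plan matches the paper's: Theorem~\ref{Smo} for the function $g$, Proposition~\ref{Zar} for the uniform Zariski closures, quantifier elimination from \cite{Rid} for the smooth algebraic stratifications, and \cite[Proposition~1.3.1]{CGH5} to conclude. The paper applies Proposition~\ref{Zar} once to $g$ with the single dimension bound $k=\dim W_{F,y}$ (and once more to the characteristic function of $W$ to produce $X_0$), and then invokes Proposition~\ref{prop:localstructure} together with Chevalley's theorem and quantifier elimination to extract the $X_i$; your variant with $k$ ranging over $0,\dots,\dim W-1$ is harmless.

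There is, however, a genuine gap in your ``$\Leftarrow$'' argument (the direction ``$u_{F,y}$ strict $C^1$ $\WF$-holonomic $\Rightarrow$ $\WF(u_{F,y})\subset\bigcup_i CN_{X_i}^{X_0}$''). You claim that a closed $F^\times$-conic algebraic subvariety of $T^*X_0$ of dimension at most $\dim X_0$ decomposes as a finite union of co-normal bundles. This is false. In $T^*\AA^2$ with coordinates $(x,y,\xi,\eta)$, take $Z=\{x=0,\ \xi=\eta\}$: it is closed, conic, $2$-dimensional, yet over each $(0,y_0)$ the fiber direction is $(1,1)$, which would force any witnessing submanifold through $(0,y_0)$ to have tangent direction $(1,-1)$ there; no smooth subvariety of $\AA^2$ can have this tangent along the whole $y$-axis, and point-strata only cover single fibers. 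The classical microlocal statement you are reaching for requires the variety to be \emph{isotropic} (equivalently, contained in a Lagrangian), not merely of half dimension; indeed $Z$ above is not isotropic since $\omega(\partial_y,\partial_\xi+\partial_\eta)=-1$.

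The missing step is therefore to show that the Zariski closure of $\WF(u_{F,y})$ is isotropic. This is where Proposition~\ref{prop:localstructure}, which the paper cites explicitly, does the real work: $\WF(u_{F,y})$ is locally definable, so its Zariski closure has the same dimension, and at generic smooth points its tangent space agrees with that of a definable open piece sitting inside some $CN_{Y_j}$ (which is isotropic). Density and closedness of the isotropy condition then propagate isotropy to the whole Zariski closure, after which the genuine classical result (closed conic isotropic $\Rightarrow$ contained in a finite union of co-normal bundles of a stratification of the base projection) applies, and the strata can be made uniformly definable via \cite{Rid}.
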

\begin{proof}
\blue{
By Theorem \ref{Smo}, $\Smo(\Lambda,E,Y)$ is a $\cCexp$-locus, say, given as the locus of $g\in \cCexp (W\times  \VF^n)$.  We may suppose that the dimension of $W_{F,y}$ equals $k$ for each $F$ and each $y\in  {\rm Dis}(E,Y)_F$.
Let $C$ be as in Proposition \ref{Zar} for $g$ and $k$, and, let $C_0$ be as in Proposition \ref{Zar} for the characteristic function of $W$ and $k$.
By Proposition \ref{prop:localstructure}, Chevalley's theorem and valued field quantifier elimination in the generalized Denef-Pas language, there are finitely many definable sets $X_{i}$ as postulated in the proposition. Namely, each $X_{i,F,y}$ equals the set of $F$-rational points on a locally closed smooth submanifold of $\AA_F^{n}$ of dimension at most $k$, and, for each $F$ and each $y\in  {\rm Dis}(E,Y)_F$, the set $X_{0,F,y}$ contains both $W_{F,y}$ and the $X_{i,F,y}$, and, $u_{F,y}$ is strict $C^1$ $\WF$-holonomic if and only if $\WF(u_{F,y})$ is contained in
$$
\bigcup_i CN_{X_{i,F,y}}^{X_{0,F,y}}.
$$
By the formalism of \cite[Proposition 1.3.1]{CGH5}, this condition corresponds to a $\cCexp$-locus which finishes the proof.
}
\end{proof}

We now get a transfer principle for $\WF$-holonomicity.

\begin{cor}[Transfer Principle for $\WF$-holonomicity]\label{transfer}
Let data and notation be as in Proposition~\ref{thm:WFh}. Then there is $M$ such that for all $F_1,F_2\in \Loc_M$ with isomorphic residue fields, one has that
$$
Y_{F_1} = {\rm{Hol}}(E,Y)_{F_1,\psi} \mbox{ for all $\psi$ in $\cD_{F_1}$}
$$
if and only if
$$
Y_{F_2} = {\rm{Hol}}(E,Y)_{F_2,\psi} \mbox{ for all $\psi$ in $\cD_{F_2}$}
$$
\end{cor}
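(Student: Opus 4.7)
The plan is to reduce the corollary to the standard transfer principle for $\cCexp$-conditions, using Proposition~\ref{thm:WFh} to reformulate the hypothesis in $\cCexp$-terms.

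First, by Proposition~\ref{thm:WFh}, ${\rm Hol}(E,Y)$ is a $\cCexp$-locus, which unpacks to an $h \in \cCexp(Y)$ such that
$$
{\rm Hol}(E,Y)_F = \{y \in Y_F : h_F(y) = 0\}
$$
for all $F \in \Loc'_{\gg 1}$. Here $h_F$ depends implicitly on the additive character $\psi$ (the third component of $F \in \Loc'$), because the $\cCexp$-class is defined uniformly in $\psi$.

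Next, I would observe that the condition ``$Y_F = {\rm Hol}(E,Y)_{F,\psi}$ for all $\psi \in \cD_F$'' is equivalent to the vanishing of $h_{(\underline F,\varpi,\psi)}(y)$ for all $y \in Y_F$ and all $\psi \in \cD_F$. Eliminating the universal quantifier on $y$ via \cite[Proposition 1.3.1]{CGH5}, and absorbing the quantification on $\psi$ into the $\cCexp$-framework (where $\psi$ is a built-in parameter of the class), this becomes a $\cCexp$-condition on $F \in \Loc_{\gg 1}$ alone, i.e., the vanishing of a single $\cCexp$-function on a point.

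Finally, I would invoke the transfer principle for $\cCexp$-conditions, a direct descendant of the motivic transfer principle of \cite{CLexp} reformulated uniformly in the framework of \cite{CGH5, CHallp}: there exists $M \geq 1$ such that for $F_1, F_2 \in \Loc_M$ with isomorphic residue fields, any given $\cCexp$-condition on $F$ has the same truth value for $F_1$ and $F_2$. Applying this to the $\cCexp$-condition above yields the claimed equivalence. The main piece of care needed is the bookkeeping of how $\psi$ interacts with the $\cCexp$-framework (in particular, why the quantifier ``for all $\psi \in \cD_F$'' does not leave the $\cCexp$-framework); this is routine but should be written out explicitly, and no new ideas beyond what is already built into the definition of the $\cCexp$-class are required.
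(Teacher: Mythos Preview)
Your proposal is correct and follows essentially the same approach as the paper: reduce to the fact that ${\rm Hol}(E,Y)$ is a $\cCexp$-locus (Proposition~\ref{thm:WFh}) and then apply the transfer principle for $\cCexp$-conditions from \cite{CLexp}. The paper's proof is more terse, simply invoking Proposition~9.2.1 of \cite{CLexp} directly without unpacking the quantifier elimination on $y$ or the role of $\psi$, but the substance is identical.
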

\begin{proof}
By Proposition~\ref{thm:WFh}, to be $\WF$-holonomic is a $\cCexp$-locus condition. By Proposition 9.2.1 of \cite{CLexp}, one has transfer for any $\cCexp$-locus condition.
\end{proof}

\begin{remark}
\blue{All results, statements and definitions of Sections \ref{sec:distCexpclass} and \ref{sec:WF} (modulo adaptations in the parts concerning algebraic varieties) hold when one consequently replaces $\gLPas$ by an enrichment obtained by adding some analytic structure as in \cite{CLip}, \cite{CLips} to $\gLPas$. Similarly, one can put additional structure on the residue rings $\Res_n$, and, one can add constants for a ring of integers $\cO$ of a number field in the sort $\VF$ and work uniformly in all finite field extensions of completions of the fraction field of $\cO$.  Indeed, this corresponds to Remark A.3 of \cite{CGH5} and Section 4.7 of \cite{CHallp}.}  
\end{remark}

\section{Discriminants and Schwartz-Bruhat functions}\label{sec:DiscrSB}

\subsection{}
%Let $X$ be a smooth proper algebraic variety over a number field with ring of integers $\ri$.
%Let $F$ be a $p$-adic field which is an algebra over $\ri$.
Let $X$ be a smooth projective variety defined over $\QQ$.
By abuse of notation we consider $X$ as a definable set by taking for $X_F$ the set of $F$-rational points $X(F)$ on $X$ for all $F$ in $\Loc_{\gg 1}$.
More precisely there exists a finite family of definable sets $X_i$, $1 \leq i \leq n$, such
that for any  $F$ in $\Loc_{\gg 1}$, $X (F)= \bigcup_i X_{i, F}$ and each $X_{i, F}$ is open in $X (F)$.
Consider a definable function $d$ with, for each $F\in \Loc_{\gg 1}$,
$$
d_F :  X(F) \times X (F) \to \ZZ \cup \{+\infty\}
$$
satisfying that
$d_F (x, x') = +\infty$ if and only if $x = x'$,  $d_F (x, x') =d_F (x', x)$
and
$d_F (x,z) \geq \min (d_F(x,y),d_F(y,z))$.
For any $x_0 \in X(F)$ and any integer $m$, we denote  by  $B_{F,x_{0}, m}$ the subset of $X(F)$ consisting of points $x$ such that $d_F(x,x_0)\geq m$, and we call such a set a ball of valuative radius $m$ in $X (F)$.

We call $d$ a definable (valuative) metric on $X$ if, for all $F\in \Loc_{\gg 1}$, all the balls $B_{F,x_{0}, m}$ are open
and if they generate the valuation topology on $X (F)$. Assume now that $d$ is a definable metric on $X$.

\subsection{}
Let $D$ be a divisor in $X$.
We denote by $T(D (F), \varepsilon)$ the tube of valuative distance $\varepsilon$ around $D (F)$, namely,
$$
\{x\in X(F)\mid d_F (x, D (F))>\varepsilon \},
$$
where
$$
d_F(x, D (F)) = \sup_{y \in D (F)} d_F(x,y).
$$
Let us write $X (D (F), \varepsilon)$ for the set $X (F)\setminus T (D (F), \varepsilon)$.

For any open subset  $W$ of $X(F)$ and
any integer $m$, let $\cS_m (W)$ denote
the set of $\mathbb C$-valued functions on $W$ that are constant on $B\cap W$ for all
balls $B$ of valuative radius $m$ in $X(F)$. % (for $d_F$).

%\change{Would we want to quantify the size of the support in some way?}
%\FL{FL: I don't think we have to do that since $X$ being assumed to be projective, $X (F)$ is compact.}

%\change{Maybe it is even better to define $\cS^k_m (W)$ as  the set of functions on $W$ that are constant on $B\cap W$ for all
%balls $B$ of valuative radius $m$ in $X(F)$ and which have support contained in    . % (for $d_F$).

Fix now an affine function  $\lambda :   \NN \times \NN \to \NN$
with positive coefficients.

We denote by $\cS^{m}_\lambda (X (F),  D(F))$ the set of functions on $X (F) \setminus D (F)$
whose restriction to  $X (D (F), \varepsilon)$ belongs to
$\cS_{\lambda (\varepsilon, m)} (X (D (F), \varepsilon))$, for any $\varepsilon$.
Furthermore, we denote by $\cS^{m}_\lambda (X,  D)$ the set of functions $\varphi$ in $\cCexp(X\setminus D)$ such that $\varphi_F$ lies in $\cS^{m}_\lambda (X (F),  D(F))$ for each $F\in \Loc'_{\gg 1}$.

\subsection{}Let $f : X \to Y$ be a morphism of smooth projective varieties defined over $\QQ$. Fix a definable valuative metric $d'$ on $Y$.
%For a divisor $D'$ in $Y$ one defines
%$\cS^m_\lambda (Y (F),  D'(F))$ similarly as above.
%Write $B'_{F,y_{0}, m}$ for the ball of valuative radius $m$ in $Y (F)$, for $d'$.
We denote  by $\Delta_f$ the discriminant of
$f : X \to Y$, that is, the smallest closed subset $Z$ of $Y$ such that the restriction of the morphism $f$  to
$X \setminus f^{-1} (Z) \to Y \setminus Z$ is smooth.
For  $F\in \Loc_{\gg 1}$
we write $f_F:X(F)\to Y(F)$ for the corresponding map between  sets  of $F$-rational points.

Let $D$ be a divisor in
$X$ and let $\omega$ be  an algebraic volume form on $X_0 = X \setminus D$.
Let $D'$ be a divisor in $Y$ containing $f (D)$ and
 the discriminant
$\Delta_f$. Let $\omega'$ be  an algebraic volume form on $Y_0 = Y \setminus D'$.

%Let  $\varphi $ be in $\cCexp(X_0)$ such that $\varphi_F$ lies in in $\cS^m (X (F))$ for each $F\in \Loc'_{\gg 1}$.
For $\varphi$ a bounded function on $X_0(F)$, one denotes the function
$$
y \longmapsto \int_{x\in f_F^{-1} (y) \cap X_0 (F)} \varphi(x)\Bigl \vert \frac{\omega }{f^* \omega'} \Bigr\vert
$$
on $Y_0 (F)$ by
$f_{F,!} (\varphi)$.

It is easy to prove (using partitions of unity, the definable, strict $C^1$ Sard Lemma and Fubini) that if $\varphi$ is locally constant on $X_0 (F)$ then
$f_{F,!} (\varphi)$ is  locally constant on $Y_0 (F)$, see e.g.~Prop.~3.3.1 of \cite{AizAvni}.

By  Theorem 4.1.1 of \cite{CHallp},
 there exists a map
$f_! : \cCexp(X\setminus D) \to \cCexp(Y\setminus D')$
such that
$f_{F,!} (\varphi_F) =  (f_{!} (\varphi))_F$
for each $F\in \Loc'_{\gg 1}$ and any $\varphi$ in $\cCexp(X\setminus D)$.

%Let $Y_{\varepsilon} (\Q_p)$ denote the set of points of $Y (\Q_p)$ at valuative distance  $< \varepsilon$  from $\Delta_f$, namely, $y\in %Y_{\Delta_f,\varepsilon} (\Q_p)$ if and only if for all $\delta\in\Delta_f$
%$$
%d(\delta,y) < \varepsilon.
%$$

%
%then
%$$
%Y_{\Delta_f,\varepsilon} (F) = Y(F)\setminus T_\varepsilon(\Delta_f).
%$$

\begin{thm}Let $f : X \to Y$ be a morphism of smooth projective varieties over $\QQ$.
Suppose that both, $X$ and $Y$, are equipped with a definable metric.
Let $D$ be a divisor in
$X$ and let $\omega$ be an algebraic volume form on $X_0 = X \setminus D$.
Let $D'$ be a divisor in $Y$ containing $f (D)$ and
 the discriminant
$\Delta_f$. Let $\omega'$ be  an algebraic volume form on $Y_0 = Y \setminus D'$.
For any  affine function  $\lambda :   \NN \times \NN \to \NN$
with positive coefficients, there exists an
affine function  $\mu :   \NN \times \NN \to \NN$
with positive coefficients, such that
for any integer $m$
and for
any $\varphi \in \cS^m_\lambda (X,  D)$,
$f_! (\varphi)$
belongs to  $\cS^m_\mu (Y,  D')$.
\end{thm}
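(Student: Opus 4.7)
The plan is to reduce the claim to local constancy of $\varphi$ and of the definable density $|\omega/f^{*}\omega'|$ via uniform cell decomposition and strict $C^1$ control of $f$ away from its discriminant. Concretely, fixing $\varepsilon$ and $m$, I would seek an affine $\mu$ such that, for every $F\in\Loc_{\gg 1}$ and every pair $y_1,y_2\in Y(D'(F),\varepsilon)$ lying in a common ball of valuative radius $\mu(\varepsilon,m)$, one has $f_!(\varphi)_F(y_1)=f_!(\varphi)_F(y_2)$. Since $\Delta_f\subset D'$, the map $f$ is smooth on $X\setminus f^{-1}(D')$. Combining the strict $C^1$ inverse function theorem \cite[Theorems 7.3, 7.4]{Glock2006} with the definable cell decomposition in the generalised Denef-Pas language, one decomposes $X\setminus(D\cup f^{-1}(D'))$ into finitely many definable cells $C_j$ on each of which $f|_{C_j}$ admits a strict $C^1$ parametrisation $\psi_j\colon U_j\times F^{k}\to C_j$ of its fibres over $U_j\subset Y_0$, with $k=\dim X-\dim Y$ and a uniform-in-$F$ bound on the number of cells.

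The technical heart of the proof is a uniform \L{}ojasiewicz-type bound
$$
-\ord_F\det Df(x)\leq\alpha\,d'_F(f(x),D'(F))+\beta,\qquad x\in C_{j,F},
$$
with $\alpha,\beta$ depending only on $f$, valid uniformly over $F\in\Loc_{\gg 1}$. I would derive it from the Jacobian property \cite[Theorem 5.3.1]{CHallp} applied to the definable function $\det Df$ (whose zero locus lies in $f^{-1}(\Delta_f)\subset f^{-1}(D')$), combined with \cite[Proposition 1.4.1]{CGH5}. Together with the strict $C^1$ Taylor expansion of \cite[Theorem 5.1, Proposition 5.3]{BGlockN}, this produces an affine $\nu$ such that, for $y_1,y_2$ in a ball $B\subset Y(D'(F),\varepsilon)$ of valuative radius $\nu(\varepsilon,m)$ and for any $z$ in a common fibre ball, the points $\psi_j(y_1,z)$ and $\psi_j(y_2,z)$ lie in a common ball of $X$ of valuative radius $\lambda(\varepsilon',m)$ contained in $X(D(F),\varepsilon')$, where $\varepsilon'$ is a fixed affine function of $\varepsilon$. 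An analogous \L{}ojasiewicz estimate shows that the $f$-preimage of the $\varepsilon$-tube around $D'$ is contained in the $\varepsilon'$-tube around $D\cup f^{-1}(D')$.

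On such a common ball of $X$ the integrand $\varphi(x)\,|\omega/f^{*}\omega'|(x)$ is constant in $x$: $\varphi$ is constant by the hypothesis $\varphi\in\cS^{m}_{\lambda}(X,D)$, while the definable density $|\omega/f^{*}\omega'|$ is locally constant on $X_0$ with affine radius control by \cite[Proposition 1.4.1]{CGH5}. Fubini over each fibre followed by summation over the uniformly finitely many cells $C_j$ then yields $f_!(\varphi)(y_1)=f_!(\varphi)(y_2)$, and taking $\mu$ to be the maximum of the affine functions arising above provides the desired affine $\mu$. The main obstacle is the uniform \L{}ojasiewicz bound displayed above, namely controlling $-\ord\det Df$ by an affine function of $d'_F(f(x),D'(F))$ uniformly in $F$: this is the subtle definable input that forces the affinity of $\mu$, and once it is in place the remainder of the argument is a routine combination of strict $C^1$ analysis, cell decomposition, and standard local-constancy estimates for definable functions.
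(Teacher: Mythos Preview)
Your approach is a direct analytical route, and the paper's is quite different and considerably softer. The paper never touches \L ojasiewicz-type estimates, fibre parametrisations, or explicit control of the relative density. Instead it argues as follows: consider the entire definable family of characteristic functions $\11_{x,r}$ of balls in $X$; by stability under integration (\cite[Theorem~4.1.1]{CHallp}) the family $g(x,\cdot,r,\varepsilon):=f_!(\11_{x,r})|_{Y\setminus T(D',\varepsilon)}$ is of $\cCexp$-class; by \cite[Corollary~1.4.3]{CGH5} there is a \emph{definable} radius function $\alpha(x,y_0,r,\varepsilon)$ controlling local constancy of $g$ in $y$; two applications of compactness (of $Y_\varepsilon$ and then of $X$) eliminate $y_0$ and $x$, yielding a definable $\gamma(r,\varepsilon)$ on $\ZZ^2$; and any Presburger-definable function on $\ZZ^2$ is bounded by an affine function. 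All the geometry you work out by hand is absorbed into the black boxes of the $\cCexp$-framework and the compactness of projective varieties.

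Your route could likely be completed, but as written it has loose ends. The displayed \L ojasiewicz bound is the crux, and you rightly flag it, but its derivation from the Jacobian property \cite[Theorem~5.3.1]{CHallp} is not the right citation: that result concerns local linearisation of definable maps, not affine-in-distance control on vanishing orders. What you actually need is that the definable $\ZZ$-valued function $x\mapsto\ord(\text{relative Jacobian})$ is bounded by an affine function of $d'_F(f(x),D'(F))$ uniformly in $F$; this is plausible via cell decomposition plus Presburger piecewise linearity, but it is a separate lemma. (Incidentally, your displayed inequality has the wrong sign: you want $\ord$, not $-\ord$, on the left, since the Jacobian degenerates as $f(x)$ approaches $D'$.) Also, $\det Df(x)$ only makes literal sense when $\dim X=\dim Y$; for general $f$ the relevant object is the fibrewise density of $\omega/f^*\omega'$, and controlling it together with the definable fibre sections $\psi_j$ uniformly in $F$ is real extra work. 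The paper's approach buys all of this for free by never opening the integral.
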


\begin{proof}
For $x\in X$ and $r\in \ZZ$ write $\11_{x,r}$ for the characteristic function of the ball $B_{x,r}$ in $X$ containing $x$ and of valuative radius $r$.
Write $Y_\varepsilon$ for the definable set $Y \setminus T (D', \varepsilon)$ and write $g(x,\cdot,r,\varepsilon)$ for the restriction of $f_! (\11_{x,r}) := ( f_{!,F} (\11_{F,x,r}) ) _{F}$ to $Y_\varepsilon$. Note that by the stability under integration given by Theorem 4.1.1 of \cite{CHallp}, $f_! (\11_{x,r})$ is of  $\cCexp$-class.
By \cite[Corollary 1.4.3]{CGH5}, there is a definable function
$$
\alpha:X \times Y_{\varepsilon} \times \ZZ^2 \to\ZZ
$$
such that the restriction to $B_{y_0,\alpha(x,y_0,r,\varepsilon)}$ of
$y\mapsto g(x, y  ,r,\varepsilon)$
is constant for any $(x,y_0,r,\varepsilon)$ in $X\times Y_{\varepsilon}$. Here, $B_{y_0,\alpha(x,y_0,r,\varepsilon)}$ is the ball in $Y$ of radius $\alpha(x,y_0,r,\varepsilon)$ around $y_0$.
Clearly, we may suppose that $\alpha$ is continuous as a function in $y$. But then, by local compactness, it attains a finite maximum
$$
\beta(x,r,\varepsilon) := \max_{y\in Y_\varepsilon} \alpha(x,y,r,\varepsilon).
$$
Clearly, $\beta$ is definable and we may suppose that it is continuous as a function in $x$. Again by compactness, it attains a finite maximum
$$
\gamma(r,\varepsilon) := \max_{x\in X} \beta(x,r,\varepsilon).
$$
Clearly
%we may suppose that
$\gamma$ is definable, and hence, bounded by a linear function $\mu$ in $(\varepsilon,r)$, which is as desired.
\end{proof}

\bibliographystyle{amsplain}
\bibliography{anbib}

\end{document}